  \def\@Opargbegintheorem#1#2#3#4{#4\trivlist
      \item[]{#3#2\@thmcounterend\ }}%
  \def\@Opargbegintheorem#1#2#3#4{#4\trivlist
      \item[\hskip\labelsep{#3#1}]{#3(#2)\@thmcounterend\ }}%
 \newtheorem{thm}{Theorem}[subsection]
 \newtheorem{lem}[thm]{Lemma}
 \newtheorem{defn}{Definition}[subsection]
 \newtheorem{exmp}{Example}[subsection]
 \newtheorem{rema}{Remark}[subsection]
\def\rem#1#2{ #1\;\textnormal{rem}\;#2}
\def\res#1#2{ \textnormal{eval}\left(#1;\  #2\right)}
\providecommand{\keywords}[1]
{
	\small	
	\textbf{\text{Keywords:}} #1
}
\providecommand{\subclass}[1]
{
	\small	
	\textbf{\text{MSC(2010):}} #1
}
\title{\Large An Algebraic Approach to $q$-Partial Fractions \\ and Sylvester Denumerants \thanks{Dedicated to Bhagawan Sri Sathya Sai Baba.}}
\author{\normalsize N. Uday Kiran\thanks{nudaykiran@sssihl.edu.in}  \\
	\small Department of Mathematics and Computer Science\\
	\small Sri Sathya Sai Institute of Higher Learning, Puttaparthi, India \\
}
\date{}
\begin{document}
    \maketitle
	\begin{abstract}
		In 1857 Sylvester established an elegant theory that certain counting functions (which he termed denumerants) are quasi-polynomials by decomposing them into periodic and non-periodic parts. Each component of the decomposition, called a Sylvester wave, corresponds to a root of unity. Recently several researchers, using either combinatorial arguments or complex analytic techniques, obtained explicit formulas for the waves. In this work, we develop an algebraic approach to the Sylvester's theory. Our methodology essentially relies on deriving $q$-partial fractions of the generating functions of the denumerants, and thereby obtain new explicit formulas for the waves. The formulas we obtain are expressed in terms of the degenerate Bernoulli numbers and a generalization of the Fourier-Dedekind sum. Further, we also prove certain reciprocity theorems of the generalized Fourier-Dedekind sums and a structure result on the top-order terms of the waves. The proofs rely on our evaluation operator and our far-reaching generalization of the Heaviside's cover-up method for partial fractions. SageMath code for this work is available in the public domain.\\
		\\
		\keywords{ Restricted Partitions $\cdot$ Sylvester Denumerants $\cdot$ Sylvester Wave $\cdot$ Ehrhart Polynomial  $\cdot$ $q$-Partial Fraction $\cdot$ Fourier-Dedekind Sum $\cdot$ Rademacher Reciprocity Theorem $\cdot$ Degenerate Bernoulli Number}\\
		\subclass{26C15 $\cdot$  11P81 $\cdot$ 11F20}
	\end{abstract}

Restricted partitions of positive integers have been studied extensively starting from the work of Euler on generating functions. So as to deploy binomial expansions, Cayley took the approach of partial fraction decomposition of these generating functions. Subsequently, Sylvester generalized the restricted partitions by formulating the counting problem as the number of non-negative integral solutions to certain linear Diophantine equations. This generalization, which he termed denumerants, permits one to define partitions into specified parts - repeated or not. 

Given a $r$-tuple $\textbf{A}=(a_{1},\cdots, a_{r})$ of positive integers with $\textnormal{gcd}(a_{1},\cdots,a_{r})=1$, the denumerant of $t$ with parts from $\textbf{A}$, denoted $d(t;\textbf{A})$, is the cardinality of the set 
$$
\left\{(x_{1},\cdots,x_{r}): \sum^{r}_{j=1}x_{j}a_{j}=t \textnormal{ and } \  x_{i}\geq 0,  \textnormal{ for } i=1,\cdots,r  \right\}.
$$
It can be easily seen that $d(t;\textbf{A})$ is given by the generating function  
\begin{equation}\label{gen_fun}
\frac{1}{(1-x^{a_{1}})\cdots (1-x^{a_{r}})}=\sum^{\infty}_{t=0}d(t; \textbf{A})x^{t}.
\end{equation}
The important case of distinct $a_{1},\cdots,a_{r}$ corresponds to the restricted partition function associated with the Frobenius problem \cite{Beck,Alfonsin}.

In \cite{Sylvester}, Sylvester made a remarkable progress by showing that 
$
 d(t;\textbf{A}) = \sum W_{j}(t; \textbf{A}),  
$ where the sum is over divisors $j$ of the entries in the  tuple $\textbf{A}$. The term $W_{j}(t;\textbf{A})$, called a Sylvester wave, is a quasi-polynomial in $t$ given by the  
coefficient of $z^{-1}$, i.e. the residue at $z=0$ of the function 
$$
\sum_{\stackrel{0\leq n<j}{\textnormal{gcd}(n,j)=1}}\frac{\xi_{j}^{-nt}e^{zt}}{(1-\xi_{j}^{a_{1}n}e^{-a_{1}z})\cdots(1-\xi_{j}^{a_{r}n}e^{-a_{r}z})},
$$
where $\xi_{j}=e^{2\pi i/j}$. Moreover, $\xi_{1}=1$ so that $W_{1}(t;\textbf{A})$ is a polynomial in $t$. Glaiser \cite{Glaiser} leveraged these waves to perform extensive hand calculations of restricted partitions (see \cite{Dickson,Agnarsson} for history). 

Building upon Cayley's work, Munagi \cite{munagi2} formulated a non-standard approach to partial fractions, the so-called $q$-partial fractions. The denominators of $q$-partial fractions are of the form $(1-x^{r})^{s}$, for some integers $r$ and $s$, so that they provide us direct representation formulas. Sills and Zeilberger \cite{Sills} obtained explicit expressions for $q$-partial fractions for partition function of $n$ into at most $m$ parts using Mapel computer algebra system. Their method involved ``rigorous guessing'' by a quasi-polynomial ansatz.   

In this work, we use elementary algebraic methods to obtain explicit $q$-partial fraction formulas for (\ref{gen_fun}), and thereby obtain formulas for the denumerants. These formulas can be handled by a computer algebra system \cite{uk_sagemath}. Our methodology involves a symbolic evaluation operator based on commutative algebra and a rigorous higher degree extension of Heaviside's cover-up method, given in \cite{man}. We also aim to show that the original idea of Cayley-Sylvester not only has a computational advantage but, to borrow the words of Hardy-Ramanujan \cite[pg. no. 76]{Ramanujan}, the interest which they possess is algebraical. 

Usually, computing a $q$-partial fraction for (\ref{gen_fun}) is quite complicated and messy (for some examples see \cite[Section 4.3]{Alfonsin}). Therefore, in this work, we write (\ref{gen_fun}) as
\begin{equation}\label{transformed_new}
\frac{1}{(1-x^{a_{1}})\cdots(1-x^{a_{r}})}=\frac{p(x)}{(1-x)^{m}(1-x^{n_{1}})^{r_{1}}\cdots(1-x^{n_{k}})^{r_{k}}},
\end{equation}
where $n_{1}, \cdots,n_{k}$ are pairwise relatively prime divisors of the elements in $\textbf{A}$ with multiplicity $r_{1},\cdots,r_{k}$ respectively. Furthermore, the right hand side of (\ref{transformed_new}) is of independent interest as it is a generalization of the restricted partition associated with the Ehrhart polynomial \cite{Beck,Beck3}.

It is easy to see that $p(x)$ in (\ref{transformed_new}) can be optimally chosen so that we retain as many divisors as possible. Although our simplification completely characterize the polynomial part $W_{1}(t;\textbf{A})$ and the top-order terms of the waves it is naturally deficient in obtaining explicit expressions for `all the frequencies' corresponding to all the divisors. For example, the terms associated with $2$ and $4$ are merged while simplifying  
\begin{equation}\label{exmp_5_case}
\prod^{5}_{j=1}\frac{1}{1-x^{j}}=\frac{1+x^{2}}{(1-x)(1-x^{3})(1-x^{4})^{2}(1-x^{5})}.
\end{equation}

The main results of this paper uncover the beautiful structure of the denumerants associated with the right hand side of (\ref{transformed_new}). We obtain explicit formula for $q$-partial fraction for (\ref{transformed_new}) of the form
\begin{equation}\label{intro_qpf}
\frac{p(x)}{(1-x)^{m}(1-x^{n_{1}})^{r_{1}}\cdots(1-x^{n_{k}})^{r_{k}}}=\frac{g_{0}(x)}{(1-x)^{s}}+\sum_{j=1}^{k}\frac{g_{j}(x)}{(1-x^{n_{j}})^{r_{j}}},
\end{equation}
where $s=m+r_{1}+\cdots+r_{k}$. With (\ref{intro_qpf}) at hand we can obtain precise and explicit formulas for the terms of the decomposition 
\begin{equation}\label{decomposition}
d(t;\textbf{A})=W_{1}(t;\textbf{A})+\sum^{k}_{j=1}W_{n_{j}}(t;\textbf{A}),
\end{equation}
where $W_{b}(t;\textbf{A})$ is the wave corresponding to $b^{th}$ root of unity. We also show that $W_{n_{j}}(t;\textbf{A})$ is a quasi-polynomial in $t$ of the form 
$
W_{n_{j}}(t;\textbf{A})=\alpha^{(j)}_{r_{j}-1}(t)t^{r_{j}-1}+\cdots+\alpha^{(j)}_{0}(t),
$ 
where $\alpha^{(j)}_{i}(t)$ is a periodic function in $t$ with periodicity $n_{j}$ and $r_{j}$ is the multiplicity of $n_{j}$. Further, we bring out the resemblance of the top-order coefficient of $W_{n_{j}}(t;\textbf{A})$ to certain generalization of the Fourier-Dedekind sums and prove a corresponding reciprocity theorem. 

Recently, obtaining explicit formulas of the waves received a lot of attention in the literature. An explicit expression for $W_{1}(t;\textbf{A})$ using Bernoulli numbers was given by Beck, Gessel and Komatsu \cite{Beck0} and without Bernoulli numbers by Dilcher and Vignat \cite{Dilcher}. Rubinstein and Fel \cite{Rubin} derived formulas for $W_{j}(t;\textbf{A})$ using Bernoulli and Euler polynomials of higher order. Rubinstein \cite{Rubinstein1} further simplified the formulas only with Bernoulli polynomials. O'Sullivan \cite{Sullivan} studied the relationship with Rademacher's coefficients. Cimpoea\c{s} and Nicolae \cite{Mircea} gave direct formula for $d(t;\textbf{A})$. In \cite{BB} the authors gave an algorithm for computing the top-order terms of $d(t;\textbf{A})$ using poset structures. 

The paper is organized as follows. The main results of the paper and our methodology are stated in Section \ref{main_results}. The extended cover-up method is proved in Section \ref{sec_PFs}. The polynomial $\Psi_{m}(x)$ and its relation with the degenerate Bernoulli number and the Fourier series which play a crucial role in our $q$-partial fractions are discussed in Section \ref{sec_psi}. The preparation lemmas for $q$-partial fractions are proved in Section \ref{di_lemma}. Subsequently, in Section \ref{subsec_qpf}, the derivations of the $q$-partial fractions and the denumerants are given. Section \ref{sec_recip} discusses a generalization of the Fourier-Dedekind sum and Section \ref{sylvester_waves} is on Sylvester waves.

\section{Main Results and Our Methodology}\label{main_results}

\subsection{Main Results} 

Our first main result is deriving an explicit formula for the polynomial part (the first wave, $W_{1}(t;\textbf{A})$). For other explicit formulas for $W_{1}(t; \textbf{A})$ see \cite{Beck0,Dilcher,Mircea}.

\begin{thm}[Polynomial Part]

\begin{equation}\label{W1}
W_{1}(t;\textbf{A})=\frac{1}{a_{1}\cdots a_{r}}\sum^{r-1}_{j=0}\sum_{j_{1}+\cdots+j_{r}=j}(-1)^{j}\begin{pmatrix}t+r-j-1 \\ t\end{pmatrix}\frac{\tilde{\beta}_{j_{1}}(a_{1})\cdots \tilde{\beta}_{j_{r}}(a_{r})}{j_{1}!\cdots j_{r}!},
\end{equation}
where $\tilde{\beta}_{j}(a)$ is the reciprocal degenerate Bernoulli number (see Section \ref{sec_psi}). 
\end{thm}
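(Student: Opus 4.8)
The plan is to read off $W_{1}(t;\mathbf{A})$ from the principal part of the generating function \eqref{gen_fun} at its pole $x=1$. Each factor $1-x^{a_{i}}$ has a simple zero at $x=1$, so the product has a pole of order $r$ there, and in the $q$-partial fraction decomposition \eqref{intro_qpf} the terms $\sum_{\ell=1}^{r} c_{\ell}(1-x)^{-\ell}$ carry exactly the polynomial (first) wave: using $(1-x)^{-\ell}=\sum_{t\ge 0}\binom{t+\ell-1}{t}x^{t}$, their contribution to $d(t;\mathbf{A})$ is $\sum_{\ell=1}^{r} c_{\ell}\binom{t+\ell-1}{t}$, which is precisely the $W_{1}(t;\mathbf{A})$ isolated in \eqref{decomposition}. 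The problem thus reduces to computing the coefficients $c_{1},\dots,c_{r}$ of the principal part at $x=1$, for which I would invoke the evaluation operator and the extended cover-up method of Section \ref{sec_PFs}.

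To compute these coefficients I would localize at the pole by setting $z=x-1$ and expanding each factor. Writing $1-x^{a}=1-(1+z)^{a}=-az\,\dfrac{(1+z)^{a}-1}{az}$ and recognizing the analytic factor $\dfrac{az}{(1+z)^{a}-1}$ as the generating function that defines the reciprocal degenerate Bernoulli numbers $\tilde{\beta}_{j}(a)$ in Section \ref{sec_psi}, I obtain the Laurent expansion
\[
\frac{1}{1-x^{a}}=-\frac{1}{a}\sum_{j\ge 0}\frac{\tilde{\beta}_{j}(a)}{j!}\,z^{j-1}.
\]
This is the crux: the $(1+z)^{a}$ that appears after the substitution is exactly what turns the ordinary Bernoulli data into the \emph{degenerate} Bernoulli numbers, and is the reason the algebraic route produces $\tilde{\beta}_{j}$ rather than the classical Bernoulli numbers one would read off from the exponential-residue description of the wave.

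Multiplying the $r$ expansions and collecting the coefficient of $z^{j-r}$ by a multinomial convolution gives
\[
\frac{1}{(1-x^{a_{1}})\cdots(1-x^{a_{r}})}=\frac{(-1)^{r}}{a_{1}\cdots a_{r}}\sum_{j\ge 0}\Bigg(\sum_{j_{1}+\cdots+j_{r}=j}\frac{\tilde{\beta}_{j_{1}}(a_{1})\cdots\tilde{\beta}_{j_{r}}(a_{r})}{j_{1}!\cdots j_{r}!}\Bigg)z^{j-r}.
\]
Converting back through $z=-(1-x)$ turns $z^{j-r}$ into $(-1)^{j-r}(1-x)^{j-r}$, so the net sign becomes $(-1)^{j}$ and the coefficient of $(1-x)^{-(r-j)}$ is $c_{r-j}=\dfrac{(-1)^{j}}{a_{1}\cdots a_{r}}\sum_{j_{1}+\cdots+j_{r}=j}\dfrac{\tilde{\beta}_{j_{1}}(a_{1})\cdots\tilde{\beta}_{j_{r}}(a_{r})}{j_{1}!\cdots j_{r}!}$. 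The principal part retains only the genuinely singular terms $\ell=r-j\ge 1$, i.e.\ $0\le j\le r-1$; substituting $c_{r-j}$ into $W_{1}(t;\mathbf{A})=\sum_{j=0}^{r-1}c_{r-j}\binom{t+r-j-1}{t}$ yields the claimed formula \eqref{W1}.

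The main obstacle I anticipate is bookkeeping rather than anything conceptual. First, one must justify carefully that $W_{1}$ is captured by the principal part at $x=1$ alone and is unaffected by the poles at the other roots of unity and by the regular part; this is exactly what the decomposition \eqref{decomposition} and the cover-up machinery of Section \ref{sec_PFs} are designed to guarantee. Second, the sign accounting in passing between the local variables $z=x-1$ and $1-x$ is delicate and is precisely where the factor $(-1)^{j}$ is produced, so it must be tracked with care; likewise the convolution of the $r$ Laurent series into a single multinomial sum, the reindexing $\ell=r-j$, and the identity $\binom{t+\ell-1}{\ell-1}=\binom{t+\ell-1}{t}$ all need to be verified rather than merely asserted.
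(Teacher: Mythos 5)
Your proposal is correct and follows essentially the same route as the paper: the Laurent expansion you derive from Carlitz's definition via $z=x-1$ is exactly the paper's expansion (\ref{young_eqn}), and the paper's proof likewise multiplies these expansions, truncates modulo $(1-x)^{r}$ (phrased there as $\res{\frac{1}{\Psi_{a_{1}}(x)\cdots\Psi_{a_{r}}(x)}}{(1-x)^{r}}$ via the extended cover-up method), and reads off $W_{1}(t;\textbf{A})$ as the coefficient of $x^{t}$ using the same binomial series. The only difference is notational --- your principal part at $x=1$ versus the paper's eval operator --- and the point you flag about separating the pole at $x=1$ from the poles at the other roots of unity is handled in the paper in the same way, through the $q$-partial fraction decomposition and the fact that $(1-x)$ divides each $g_{j}(x)$.
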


Degenerate Bernoulli numbers were proposed by Carlitz \cite{degenerate_bernoulli}, and are of contemporary interest (see \cite{degenerate_bernoulli,young,Zhang}). The key to our investigation is our observation that $\tilde{\beta}_{k}(m)$ is given by the exponential generating function  
$$
\frac{m(1-x)}{1-x^{m}}=\frac{m}{\Psi_{m}(x)}=\sum_{k=0}^{\infty}(-1)^{k}\frac{\tilde{\beta}_{k}(m)}{k!}(1-x)^{k}.
$$
As we will see later the polynomial $\Psi_{m}(x)=\sum_{j=0}^{m-1}x^{j}$ plays a fundamental role in our $q$-partial fractions. Moreover, by Lemma \ref{pow_k} we define a variant of the degenerate Bernoulli polynomial $\tilde{\beta}_{k}(m,x)$ relying on a residue of powers 
\begin{equation}\label{RDB}
\frac{(-1)^{k}}{k!}\tilde{\beta}_{k}(m,x) = \rem{\left(-\frac{1}{m}x\Psi'_{m}(x)\right)^{k}}{\Psi_{m}(x)},
\end{equation}
where rem stands for the polynomial remainder operator. By setting $x=1$ we obtain $\tilde{\beta}_{k}(m)=\tilde{\beta}_{k}(m,1)$. This approach is new in the literature and provides us with an $O(mn)$ algorithm to compute the first $n$ degenerate Bernoulli numbers and $O(m\log n)$ to directly compute the $n^{th}$ number. Several interesting properties of (\ref{RDB}) are explored in \cite{uksl}.

Our next main result is a $q$-partial fraction on the right hand side of (\ref{transformed_new}).
\begin{thm}[$q$-Partial Fraction Decomposition]\label{main_qpf}
The following $q$-partial fraction holds
\begin{eqnarray}\label{main_qpf_formula}
\frac{p(x)}{(1-x)^{m}}\prod_{j=1}^{k}\frac{1}{(1-x^{n_{j}})^{r_{j}}}&=&\sum^{m+s-1}_{j=0}\frac{c_{j}}{(1-x)^{m+s-j}} + \sum^{k}_{j=1}\frac{g_{j}(x)}{(1-x^{n_{j}})^{r_{j}}}\label{rational_fun1}\\ 
\nonumber &=& \sum^{m+s-1}_{j=0}\frac{c_{j}}{(1-x)^{m+s-j}} + \sum^{k}_{j=1}\sum_{i=0}^{r_{j}-1}\frac{g_{ij}(x)}{(1-x^{n_{j}})^{r_{j}-i}},
\end{eqnarray}
where $s=r_{1}+\cdots+r_{k}$ and $\textnormal{deg}(g_{ij})<n_{j}$,  
$$
c_{j}=\frac{1}{n_{1}^{r_{1}}\cdots n_{k}^{r_{k}}}\sum_{i_{0}+i_{1}+\cdots+i_{k}=j}(-1)^{j}\frac{D^{(i_{0})}p(1)}{i_{0}!}\frac{\tilde{\beta}^{(r_{1})}_{i_{1}}(n_{1})\cdots \tilde{\beta}^{(r_{k})}_{i_{k}}(n_{k})}{i_{1}!\cdots i_{k}!},
$$
for  $0\leq j\leq m+k-1$, $D$ is differentiation, and 
\begin{equation}\label{gj_term_main_qpf}
g_{j}(x) =  (1-x)^{r_{j}}\res{\frac{p(x)}{(1-x)^{m+s}}\prod_{\stackrel{i=1}{i\neq j}}^{k}\frac{1}{\Psi_{n_{i}}(x)^{r_{i}}}}{\Psi_{n_{j}}(x)^{r_{j}}},
\end{equation}
where $\textnormal{deg}(g_{j})<n_{j}r_{j}$. Here $\tilde{\beta}_{k}^{(r)}(n)$ is order $r$ reciprocal degenerate Bernoulli number given in Section \ref{sec_psi}. 
\end{thm}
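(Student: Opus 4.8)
The plan is to reduce the statement to the extended cover-up method of Section~\ref{sec_PFs} combined with a single generating-function expansion. First I would rewrite the denominator: since $1-x^{n_j}=(1-x)\Psi_{n_j}(x)$, the left-hand side equals
$$
\frac{p(x)}{(1-x)^{m+s}\prod_{j=1}^{k}\Psi_{n_j}(x)^{r_j}},\qquad s=r_{1}+\cdots+r_{k},
$$
and I would record the two structural facts that drive everything. First, $\deg\Psi_{n_j}=n_j-1$ and $\Psi_{n_j}(1)=n_j\neq 0$. Second, the factors $(1-x),\Psi_{n_1},\dots,\Psi_{n_k}$ are pairwise coprime: the nontrivial roots of $\Psi_{n}$ are the primitive $d$-th roots of unity with $d\mid n$, $d>1$, so for pairwise relatively prime $n_j$ these root sets are disjoint and none contains $1$. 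Hence the blocks $(1-x)^{m+s}$ and $\Psi_{n_j}(x)^{r_j}$ give a coprime factorization of the denominator, and (the expression being proper in the denumerant setting) a partial fraction decomposition with one numerator per block exists and is unique.

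Next I would apply the extended Heaviside cover-up method verbatim. It produces the numerator over each prime-power block $Q^{e}$ as the evaluation modulo $Q^{e}$ of the rational function obtained by deleting $Q^{e}$ from the denominator, where the operator $\res{\cdot}{\cdot}$ is legitimate precisely because the deleted denominator is coprime to $Q$, hence invertible modulo $Q^{e}$. For the block $\Psi_{n_j}^{r_j}$ this gives the numerator $\res{\tfrac{p(x)}{(1-x)^{m+s}}\prod_{i\neq j}\Psi_{n_i}(x)^{-r_i}}{\Psi_{n_j}(x)^{r_j}}$, of degree $<r_j(n_j-1)$; multiplying by $(1-x)^{r_j}$ to return from denominator $\Psi_{n_j}^{r_j}$ to $(1-x^{n_j})^{r_j}$ reproduces exactly \eqref{gj_term_main_qpf} and the bound $\deg g_j<n_j r_j$. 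The second displayed form is then purely formal: dividing $g_j$ repeatedly by $1-x^{n_j}$ yields its base-$(1-x^{n_j})$ expansion $g_j=\sum_{i=0}^{r_j-1}g_{ij}(x)(1-x^{n_j})^{i}$ with each $\deg g_{ij}<n_j$, which is the claimed inner sum.

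It remains to extract the $c_j$. The cover-up method gives the $(1-x)$-block numerator as $A(x)=\res{p(x)\prod_j\Psi_{n_j}(x)^{-r_j}}{(1-x)^{m+s}}$, the degree-$(m+s-1)$ truncation of the $(1-x)$-adic (Taylor) expansion of $p(x)\prod_j\Psi_{n_j}(x)^{-r_j}$ about $x=1$; writing $A(x)=\sum_{j}c_j(1-x)^{j}$ and dividing by $(1-x)^{m+s}$ matches the first sum directly. To make $c_j$ explicit I would feed in the generating function $\tfrac{n}{\Psi_n(x)}=\sum_k(-1)^k\tfrac{\tilde{\beta}_k(n)}{k!}(1-x)^k$ and its $r$-th power, which defines the order-$r$ numbers via $\bigl(\tfrac{n}{\Psi_n(x)}\bigr)^{r}=\sum_k(-1)^k\tfrac{\tilde{\beta}^{(r)}_k(n)}{k!}(1-x)^k$ (Section~\ref{sec_psi}), together with $p(x)=\sum_{i}(-1)^{i}\tfrac{D^{(i)}p(1)}{i!}(1-x)^{i}$. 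A Cauchy product over $i_0+i_1+\cdots+i_k=j$ collects the coefficient of $(1-x)^{j}$; since the signs combine to $(-1)^{i_0+\cdots+i_k}=(-1)^{j}$ and the constants $n_l^{r_l}$ factor out, this reproduces the stated formula for $c_j$.

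I expect the conceptual crux to be the second step: checking that the cover-up formula applies when the covered factor is a \emph{power} of the \emph{non-linear} polynomial $\Psi_{n_j}$, which is exactly where the evaluation operator and the coprimality of the first step do the real work. Once $\res{\cdot}{\cdot}$ is known to be well defined modulo each $\Psi_{n_j}^{r_j}$, the $g_j$ formula is immediate and the $c_j$ computation is routine generating-function bookkeeping, the only delicate points being the sign tracking and the index range over which the closed form is asserted.
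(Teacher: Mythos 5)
Your proposal is correct and follows essentially the same route as the paper's own derivation in Section \ref{subsec_qpf}: rewrite the denominator as the pairwise coprime blocks $(1-x)^{m+s}$ and $\Psi_{n_j}(x)^{r_j}$, apply the extended cover-up method (Theorem \ref{cover_up}) to get each block numerator, expand the $(1-x)$-block numerator as a Cauchy product of the Taylor expansion of $p$ at $x=1$ with the order-$r$ reciprocal degenerate Bernoulli generating functions to read off $c_j$, and expand $g_j$ in base $(1-x^{n_j})$ for the second display. The only cosmetic difference is that the paper packages that last expansion as Theorem \ref{taylor_Dm} via the operator $D_b$ (which also yields explicit formulas for the $g_{ij}$), whereas you obtain it by repeated division, which is equivalent for the degree bound $\textnormal{deg}(g_{ij})<n_j$ claimed in the statement.
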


A simplification of $g_{j}(x)$ is done by the eval function (explained in the next subsection). The expression for $g_{j}(x)$ is obtained by direct formulas given in Table \ref{eval_table} (in Subsection \ref{our_methodology}) and basic properties of eval operator are given in the Lemma \ref{lem_res}. In particular, for $r_{1}=\cdots=r_{k}=1$ the expression for $g_{j}(x)$ simplifies to  
$$
g_{j}(x) =  (1-x)\rem{\left(q_{j}(x)^{m+k}p(x)\sum^{a_{1}^{(j)}}_{i_{1}=0}\cdots\sum^{a_{k}^{(j)}}_{i_{k}=0}x^{\sum_{s=1}^{k}i_{s}n_{s}\%n_{j}}\right)}{\Psi_{n_{j}}(x)},
$$
where $q_{j}(x)=-\frac{1}{n_{j}}x\Psi'_{n_{j}}(x)$, $ a_{j}^{(j)}=0$ and $(a_{i}^{(j)}+1)n_{i}-b_{i}^{(j)}n_{j}=1, a^{(j)}_{i}, b^{(j)}_{i}\geq0$ for $i\neq j$. For this special case, denoting $g_{j}(x)=\sum^{n_{j}-1}_{i=0}c_{i}^{(j)}x^{i}$ we obtain a representation formula for the denumerant
$$
d(t;\textbf{A})=\sum^{m+k-1}_{j=0}c_{j}
\begin{pmatrix}
t+m+k-j-1\\
t\\
\end{pmatrix}
+\sum^{k}_{j=1}\frac{1}{n_{j}}c^{(j)}_{t\%n_{j}}.
$$
This formula is easy to compute as one can perform a remainder of $\Psi_{m}(x)$ in linear time using the rule given in Lemma \ref{cor_phi}. Further, by the FFT algorithm one can obtain the expression for $g_{j}(x)$ in $O(n_{j}\log n_{j})$ time for $j=1,\cdots,k$.

Performing a Fourier analysis on the $q$-partial fraction formula (\ref{main_qpf_formula}) we obtain the decomposition 
$$
d(t;\mathbf{A})=W_{1}(t;\mathbf{A})+W_{n_{1}}(t;\mathbf{A})+\cdots+W_{n_{k}}(t;\mathbf{A})
$$
and derive a formula for the top-order term of the quasi-polynomial $W_{n_{j}}(t;\textbf{A})$, for $1\leq j\leq k$. The term $W_{n_{j}}(t;\textbf{A})$ corresponds to the periodic part associated with the frequency of $n_{j}$ given by 
$$
W_{n_{j}}(t;\textbf{A})= \alpha^{(n_{j})}_{r_{j-1}}(t)t^{r_{j}-1}+\cdots+\alpha^{(n_{j})}_{0}(t),
$$
where $\alpha^{(n_{j})}_{i}(t)$ is a periodic function in $t$ with periodicity $n_{j}$, for $i=0,\cdots,(r_{j}-1)$ and $r_{j}$ is the multiplicity of $n_{j}$. Our next main result is to provide an explicit formula for the top-order term's coefficient $\alpha_{r_{j-1}}^{(n_{j})}(t)$.

\begin{thm}[Structure Theorem]\label{structure_theorem}
The denumerant $d(t;\textbf{A})$ can be decomposed as (\ref{decomposition}) with $W_{1}(t;\textbf{A})$ given in (\ref{W1}) and $W_{n_{j}}(t;\textbf{A})$ is a quasi-polynomial of degree $r_{j}-1$ given by 
\begin{equation}\label{wave_j}
W_{n_{j}}(t;\textbf{A})=P_{j}(\xi_{j})\frac{t^{r_{j}-1}}{(r_{j}-1)!}+O(t^{r_{j}-2}), 
\end{equation}
where 
$$
P_{j}(\xi_{j})=\frac{1}{n_{j}^{r_{j}}}\sum^{r_{j}-1}_{\alpha=1}\frac{p(\xi^{\alpha}_{j})\xi_{j}^{-\alpha t}}{(1-\xi_{j}^{\alpha})^{m}(1-\xi_{j}^{\alpha n_{1}})^{r_{1}}\cdots \widehat{(1-\xi_{j}^{\alpha n_{j}})^{r_{j}}}\cdots(1-\xi^{\alpha n_{k}}_{j})^{r_{k}}},
$$
for $\xi_{j}=e^{2\pi i/n_{j}}$, $j=1,\cdots,k, \textnormal{ and } \widehat{ }$ stands for dropping the term. 
\end{thm}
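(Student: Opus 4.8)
The plan is to obtain both the decomposition \eqref{decomposition} and the leading coefficient directly from the $q$-partial fraction of Theorem \ref{main_qpf}, by extracting the coefficient of $x^{t}$ block by block. First I would record the two elementary expansions $\frac{1}{(1-x)^{p}}=\sum_{t\ge 0}\binom{t+p-1}{t}x^{t}$ and $\frac{1}{(1-x^{n_j})^{r_j}}=\sum_{u\ge 0}\binom{u+r_j-1}{r_j-1}x^{un_j}$. Applying $[x^{t}]$ to the right-hand side of \eqref{main_qpf_formula}, the block $\sum_{j=0}^{m+s-1}\frac{c_j}{(1-x)^{m+s-j}}$ produces a genuine polynomial in $t$, namely the first wave $W_1(t;\mathbf{A})$, whose closed form \eqref{W1} is the content of our first main result (the Polynomial Part theorem). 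Grouping the remaining contributions defines $W_{n_j}(t;\mathbf{A})=[x^{t}]\frac{g_j(x)}{(1-x^{n_j})^{r_j}}$, which yields \eqref{decomposition}, so it remains only to analyze this single block.

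Writing $g_j(x)=\sum_{l}b_l x^{l}$ with $\deg g_j<n_j r_j$ and convolving with the expansion of $(1-x^{n_j})^{-r_j}$ gives $W_{n_j}(t;\mathbf{A})=\sum_{l\equiv t\,(\mathrm{mod}\,n_j)}b_l\binom{(t-l)/n_j+r_j-1}{r_j-1}$, the sum running over $0\le l\le\deg g_j$. Since each binomial coefficient is a polynomial of degree $r_j-1$ in $(t-l)/n_j$, and hence in $t$, this exhibits $W_{n_j}(t;\mathbf{A})$ as a quasi-polynomial of degree $r_j-1$ whose coefficients depend only on $t\bmod n_j$, establishing the claimed structure. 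Isolating the top power via $\binom{(t-l)/n_j+r_j-1}{r_j-1}=\frac{t^{r_j-1}}{n_j^{r_j-1}(r_j-1)!}+O(t^{r_j-2})$, the leading coefficient equals $\frac{1}{n_j^{r_j-1}(r_j-1)!}\sum_{l\equiv t}b_l$, i.e. $\frac{t^{r_j-1}}{(r_j-1)!}$ times $\frac{1}{n_j^{r_j-1}}$ times the sum of the coefficients of $g_j$ lying in the residue class $t\bmod n_j$.

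The next step converts that residue-class sum into the root-of-unity form of $P_j(\xi_j)$. I would apply the discrete Fourier (roots-of-unity) filter $\sum_{l\equiv\rho\,(\mathrm{mod}\,n_j)}b_l=\frac{1}{n_j}\sum_{\alpha=0}^{n_j-1}\xi_j^{-\rho\alpha}g_j(\xi_j^{\alpha})$ with $\rho=t\bmod n_j$, noting $\xi_j^{-\rho\alpha}=\xi_j^{-t\alpha}$, so that the leading coefficient becomes $\frac{1}{n_j^{r_j}}\sum_{\alpha=0}^{n_j-1}\xi_j^{-t\alpha}g_j(\xi_j^{\alpha})$. Two facts then finish the identification. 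First, the explicit form \eqref{gj_term_main_qpf} carries an overall factor $(1-x)^{r_j}$, so $g_j(1)=0$ and the $\alpha=0$ term drops out, which is precisely why the sum in $P_j$ runs over the nontrivial roots $\xi_j^{\alpha}$, $\alpha=1,\dots,n_j-1$. Second, I evaluate $g_j(\xi_j^{\alpha})$ by the extended cover-up method of Section \ref{sec_PFs}: multiplying \eqref{main_qpf_formula} through by $(1-x^{n_j})^{r_j}$ and setting $x=\xi_j^{\alpha}$, every $(1-x)$-power term and every $g_i$-block with $i\neq j$ carries a factor that vanishes, while by pairwise coprimality of the $n_i$ one has $\xi_j^{\alpha}\neq 1$ and $\xi_j^{\alpha n_i}\neq 1$ for $i\neq j$, so the surviving factors are nonzero and $g_j(\xi_j^{\alpha})=\frac{p(\xi_j^{\alpha})}{(1-\xi_j^{\alpha})^{m}\prod_{i\neq j}(1-\xi_j^{\alpha n_i})^{r_i}}$. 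Substituting this yields exactly $P_j(\xi_j)$.

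The main obstacle I anticipate is bookkeeping at the boundary rather than any deep difficulty: one must verify that no cross-contamination occurs between the waves, i.e. that the order-$r_j$ poles at the nontrivial $n_j$-th roots of unity contribute only to $W_{n_j}$ and not to $W_1$, and conversely that the $(1-x)^{r_j}$ factor in $g_j$ genuinely removes the $x=1$ pole so that $W_{n_j}$ carries no pure-polynomial piece at top order. This hinges on pairwise coprimality, which guarantees that the singular sets of the distinct blocks meet only at $x=1$, together with the precise factorization of $g_j$ coming from \eqref{gj_term_main_qpf}. A secondary point to check is that the error term is genuinely $O(t^{r_j-2})$ uniformly in the residue $t\bmod n_j$, which follows because only finitely many residues occur and each sub-leading binomial term is of strictly lower degree in $t$.
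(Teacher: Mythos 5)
Your proposal is correct, and its skeleton matches the paper's: both read the decomposition (\ref{decomposition}) directly off the $q$-partial fraction of Theorem \ref{main_qpf} and identify the top coefficient of the block $g_{j}(x)/(1-x^{n_{j}})^{r_{j}}$ by Fourier analysis at the $n_{j}$-th roots of unity. The difference lies in how the two key steps are carried out, and your implementation is the more elementary and self-contained one. For the leading term, the paper expands $g_{j}(x)=\sum_{i}h_{ij}(x)(1-x^{n_{j}})^{i}$ via the Taylor-type expansion of Theorem \ref{taylor_Dm} (the operator $D_{n_{j}}$), takes the finite Fourier series (\ref{zero_term}) of $h_{0j}(x)/(1-x^{n_{j}})$, and extracts the top order through the trigonometric formula for $\lfloor t/n_{j}\rfloor$, arriving at (\ref{t_term}); you instead convolve the coefficients of $g_{j}$ with the binomial expansion of $(1-x^{n_{j}})^{-r_{j}}$ and apply the roots-of-unity filter to the residue-class sums, reaching the same expression without $D_{b}$, without Theorem \ref{taylor_Dm}, and without the floor-function identity. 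For the evaluation of $g_{j}(\xi_{j}^{\alpha})$, the paper invokes Theorem \ref{eval_hj}, i.e.\ Substitution Rule 2 (Lemma \ref{eval_eval}) applied to (\ref{gj_term_main_qpf}), whereas you multiply the identity (\ref{main_qpf_formula}) by $(1-x^{n_{j}})^{r_{j}}$ and substitute $x=\xi_{j}^{\alpha}$; this classical cover-up argument is rigorous, since after the multiplication both sides are regular at $\xi_{j}^{\alpha}$ and pairwise coprimality keeps every surviving factor nonzero. What the paper's heavier route buys is the full set of lower-order data $h_{ij}(\xi_{j})$, which it needs elsewhere to compute the $O(t^{r_{j}-2})$ terms of the waves; your argument is leaner but yields only the top term. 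Two remarks: your reading that the sum in $P_{j}(\xi_{j})$ runs over $\alpha=1,\dots,n_{j}-1$ --- the $\alpha=0$ term dying because $(1-x)^{r_{j}}$ divides $g_{j}$ --- is the correct one (the displayed upper limit $r_{j}-1$ in the statement is evidently a typo for $n_{j}-1$); and your identification of the $(1-x)$-block with (\ref{W1}), which the paper derives from a different $q$-partial fraction, is asserted at exactly the paper's own level of detail (both implicitly use that the $g_{j}$-blocks have no pole at $x=1$, since $\Psi_{n_{j}}(1)=n_{j}\neq 0$, and hence contribute no polynomial component), so there is no gap relative to the paper on that point.
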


As a consequence of our structure theorem one can directly read-off the top-order terms of the periodic parts of the denumerants.

\begin{exmp}
The restricted partition function $p_{5}(t)$ associated with (\ref{exmp_5_case}) can be written as\\ $p_{5}(t)=W_{1}(t)+W_{3}(t)+W_{4}(t)+W_{5}(t)$ where 
\begin{eqnarray}
\nonumber W_{3}(t)&=&\frac{1}{3}\sum_{j=1}^{2}\frac{\xi_{3}^{-jt}}{(1-\xi_{3}^{j})(1-\xi_{3}^{2j})(1-\xi_{3}^{4j})(1-\xi_{3}^{5j})},\\
\nonumber W_{5}(t)&=&\frac{1}{5}\sum_{j=1}^{4}\frac{\xi_{5}^{-jt}}{(1-\xi_{5}^{j})(1-\xi_{5}^{2j})(1-\xi_{5}^{3j})(1-\xi_{5}^{4j})},\quad \textnormal{ and }\\
\nonumber W_{4}(t)&=& \left(\frac{1}{4}\sum^{3}_{j=1}\frac{\xi_{4}^{-jt}(1+\xi_{4}^{2j})}{(1-\xi_{4}^{j})(1-\xi_{4}^{3j})(1-\xi_{4}^{5j})}\right)t+O(1)=\frac{(-1)^{t}}{2^{6}}t+O(1).
\end{eqnarray}
The $O(1)$ terms of $W_{4}(t)$ can be computed directly by Equation \ref{qpf_case2_eqn}, Theorem \ref{taylor_Dm} and Theorem \ref{eval_hj}. 
\end{exmp}


Another key finding of this paper is that the top-order term of $W_{n_{j}}(t;\textbf{A})$ has a similarity to a  generalization of the Fourier-Dedekind sum. In fact, for the special case $r_{1}=\cdots=r_{k}=r$ we can write 
$$
W_{n_{j}}(t;\textbf{A})=S^{(m,r,p(x))}_{-t}(\textbf{A}_{j};n_{j})\frac{t^{r-1}}{(r-1)!}+O(t^{r-2}),
$$
where $\textbf{A}_{j}=(n_{1},\cdots, \widehat{n_{j}},\cdots,n_{k})$ and $S^{(m,r,p(x))}_{t}$ is the generalized Fourier-Dedekind sum (given in Definition \ref{FDS}). This motivates us to generalize the reciprocity results given in  \cite{Carlitz1,Gessel,Zagier,Beck,Beck1,Tuskerman}.

\begin{figure}[t!]
\centering 
\includegraphics[scale=0.6]{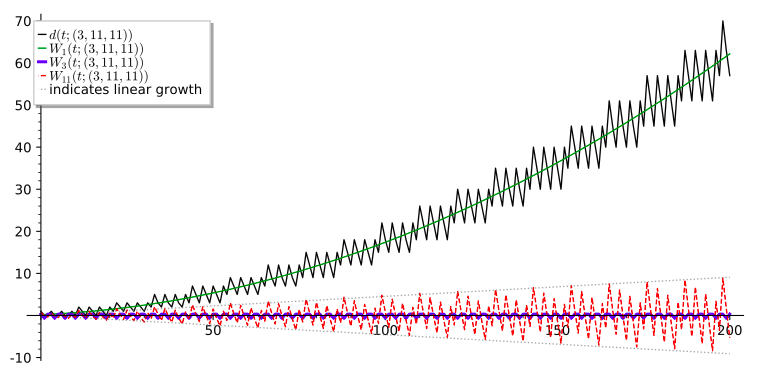}
\\ The multiplicity of $11$ and $3$ are $2$ and $1$ respectively so we have $W_{11}(t)=\alpha_{1}^{(11)}(t)t+\alpha_{0}^{(11)}(t)$ and $W_{3}(t)=\alpha_{0}^{(3)}(t)$, where $\alpha_{1}^{(11)}(t), \alpha_{0}^{(11)}(t)$ and $\alpha_{0}^{(3)}(t)$ are periodic functions. Thus the amplitude of the periodic part $W_{11}(t,\mathbf{A})$ grows linearly in $t$. Here $d(t,\mathbf{A})=W_{1}(t,\mathbf{A})+W_{3}(t,\mathbf{A})+W_{11}(t,\mathbf{A})$ for $\mathbf{A}=(3,11,11)$. 
\end{figure}

\begin{thm}[Reciprocity Theorem] The following result holds
$$
\sum^{k}_{j=1}S_{t}^{(m,r,p(x))}(n_{1},\cdots,\widehat{n_{j}},\cdots,n_{k};n_{j}) =-\textnormal{poly}(-t),\quad \textnormal{for } 1\leq t< \lambda,
$$
where the polynomial $\textnormal{poly}(-t)$ and $\lambda$ will be made precise later. 
\end{thm}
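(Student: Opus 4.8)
The plan is to recognize the claimed identity as an instance of Ehrhart-type reciprocity and to establish it by a global residue count on the Riemann sphere, in the spirit of the classical Fourier--Dedekind reciprocity \cite{Beck,Beck1}, but now carrying along the numerator $p(x)$ and the multiplicities $r_{j}$. Write $G(x)=p(x)\big/\big[(1-x)^{m}\prod_{j=1}^{k}(1-x^{n_{j}})^{r_{j}}\big]$ and let $N=m+\sum_{j=1}^{k}n_{j}r_{j}$ be the degree of its denominator. The first step is to read off, from the $q$-partial fraction of Theorem \ref{main_qpf} together with the Fourier analysis behind Theorem \ref{structure_theorem}, that each generalized Fourier--Dedekind sum $S_{t}^{(m,r,p(x))}(n_{1},\dots,\widehat{n_{j}},\dots,n_{k};n_{j})$ is, up to the sign convention fixed by Definition \ref{FDS}, the negative of the residue of the kernel $F(x)=G(x)\,x^{t-1}$ at the nontrivial $n_{j}$-th roots of unity $\zeta$ (those with $\zeta^{n_{j}}=1,\ \zeta\neq1$, which by pairwise coprimality are grouped cleanly with $n_{j}$), while $\mathrm{poly}(-t)$ is the negative of the residue of $F$ at $x=1$. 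Thus it suffices to prove the residue identity $\operatorname{Res}_{x=1}F+\sum_{j}\sum_{\zeta}\operatorname{Res}_{x=\zeta}F=0$ for $1\le t<\lambda$.

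Second, I would invoke the residue theorem for the rational form $F(x)\,dx$ on $\mathbb{P}^{1}$, which forces the sum of all residues to vanish:
\[
\operatorname{Res}_{x=0}F+\operatorname{Res}_{x=1}F+\sum_{j}\sum_{\zeta}\operatorname{Res}_{x=\zeta}F+\operatorname{Res}_{x=\infty}F=0 .
\]
The residue at the origin is immediate from the expansion $G(x)=\sum_{s\ge0}d(s;\mathbf{A})x^{s}$: one has $\operatorname{Res}_{x=0}F=[x^{-t}]G(x)=d(-t;\mathbf{A})$, which is $0$ for every $t\ge1$ since the generating function has no negative powers. This is exactly the source of the lower bound $t\ge1$ in the statement.

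Third --- and this is the real content --- I would show $\operatorname{Res}_{x=\infty}F=0$ precisely for $t<\lambda$, which simultaneously closes the argument and pins down $\lambda$. Substituting $x=1/w$ and using $(1-1/w)^{m}=(-1)^{m}(1-w)^{m}w^{-m}$ and $(1-w^{-n_{j}})^{r_{j}}=(-1)^{r_{j}}(1-w^{n_{j}})^{r_{j}}w^{-n_{j}r_{j}}$ converts the reciprocal denominator into $(-1)^{m+\sum_{j}r_{j}}w^{N}\big/\big[(1-w)^{m}\prod_{j}(1-w^{n_{j}})^{r_{j}}\big]$, so that
\[
\operatorname{Res}_{x=\infty}F=-\operatorname{Res}_{w=0}\frac{1}{w^{2}}F\!\left(\tfrac1w\right)=(-1)^{m+\sum_{j}r_{j}+1}\,[w^{t}]\,\frac{\widetilde p(w)}{(1-w)^{m}\prod_{j}(1-w^{n_{j}})^{r_{j}}},
\]
where $\widetilde p(w)=w^{N}p(1/w)$ is an honest polynomial whose lowest-degree monomial has degree $N-\deg p$. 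Since the displayed power series in $w$ therefore starts at order $N-\deg p$, its coefficient of $w^{t}$ vanishes for all $t<N-\deg p$. Setting $\lambda:=N-\deg p=m+\sum_{j}n_{j}r_{j}-\deg p$ (a positive integer, as the fraction is proper, so $\deg p<N$) yields $\operatorname{Res}_{x=\infty}F=0$ for $1\le t<\lambda$; combined with the vanishing residue at the origin, the residue sum collapses to $\operatorname{Res}_{x=1}F+\sum_{j}\sum_{\zeta}\operatorname{Res}_{x=\zeta}F=0$, i.e. $\sum_{j}S_{t}^{(m,r,p(x))}(\dots;n_{j})=-\mathrm{poly}(-t)$.

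The main obstacle I anticipate is the precise degree bookkeeping at infinity in the presence of the numerator $p(x)$ and the repeated factors $(1-x^{n_{j}})^{r_{j}}$: unlike the classical pairwise-coprime, numerator-free case, the threshold $\lambda$ is shifted by $\deg p$, and one must verify that $p$ produces no spurious $w^{t}$ term below order $N-\deg p$. A secondary, purely clerical difficulty is reconciling the sign and argument conventions --- the $\xi^{t}$ in Definition \ref{FDS} against the $\xi^{-t}$ appearing in the waves $W_{n_{j}}(t;\mathbf{A})$ of Theorem \ref{structure_theorem} --- which is what fixes the substitution $t\mapsto-t$ and the overall minus sign on the right-hand side.
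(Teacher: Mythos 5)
Your proposal contains a genuine gap, and it is exactly at the step you call ``the first step.'' For $r\geq 2$ the poles of $F(x)=G(x)x^{t-1}$ at the nontrivial $n_{j}$-th roots of unity have order $r$, not order one, so the residue there is \emph{not} obtained by evaluating the cofactor at $\zeta$: it requires $(r-1)$ derivatives of $(x-\zeta)^{r}F(x)$, and the derivatives falling on $x^{t-1}$ produce factors $(t-1)(t-2)\cdots$, i.e.\ genuine polynomial-in-$t$ terms, while the derivatives falling on $p(x)$ and on the factors $(1-x^{n_{i}})^{-r}$, $(1-x)^{-m}$ produce the lower-order coefficients of the wave. In other words, $\sum_{\zeta^{n_{j}}=1,\,\zeta\neq 1}\operatorname{Res}_{x=\zeta}F$ equals (up to sign) the \emph{entire} quasi-polynomial wave $W_{n_{j}}$, of degree $r-1$ in $t$, whereas the generalized Fourier--Dedekind sum $S^{(m,r,p(x))}_{t}$ is only its top-order coefficient --- this is precisely the content of Theorem \ref{structure_theorem}, whose $O(t^{r_{j}-2})$ terms you are implicitly discarding. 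Consequently the residue identity your argument yields is a reciprocity for the full waves (a Popoviciu/Ehrhart-type statement, valid and interesting), but it is not the claimed identity for $\sum_{j}S_{t}^{(m,r,p(x))}(\dots;n_{j})$. Your threshold $\lambda=m+\sum_{j}n_{j}r_{j}-\deg p$ is likewise not the paper's $\lambda$; the two only agree when $r=1$, which is the one case where your identification of residues with Fourier--Dedekind sums is correct (and there your argument is a clean complex-analytic twin of the paper's formal-power-series proof of Theorem \ref{k_dim_recip_thm}).

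The missing device is the paper's finite differencing. Since $\xi_{j}^{N}=1$ for $N=n_{1}\cdots n_{k}$, the sums $S^{(m,r,p(x))}_{t}$ are $N$-periodic in $t$, and the binomial identity
\[
\sum^{r-1}_{q=0}\binom{r-1}{q}(-1)^{q}(t-qN)^{s}=
\begin{cases}
0 & s<r-1,\\
(r-1)!\,N^{r-1} & s=r-1,
\end{cases}
\]
shows that $b(t)=\sum_{q=0}^{r-1}\binom{r-1}{q}(-1)^{q}d(t-qN;\mathbf{A})$ annihilates all lower-order terms of every wave and retains exactly $N^{r-1}\sum_{j}S^{(m,r,p(x))}_{-t}(\mathbf{A}_{j};n_{j})$ plus the differenced polynomial part $\textnormal{poly}'(t)$. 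Equivalently, at the level of generating functions, differencing multiplies the numerator by $(1-x^{N})^{r-1}$, which vanishes to order $r-1$ at every nontrivial $n_{j}$-th root of unity and so cancels your order-$r$ poles down to \emph{simple} poles; after this repair your residue-theorem argument (or the paper's formal reciprocity $G^{o}(x)=-G(1/x)$ from \cite{Beck3}) goes through verbatim. The price is the corrected window: the numerator degree grows by $(r-1)N$, so $\lambda=m+r(n_{1}+\cdots+n_{k})-(r-1)N-\deg p$, which is why the paper must assume $m$ sufficiently large to make $\lambda>0$ --- a hypothesis that is invisible in your version and is a second symptom of the same gap.
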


As a consequence of the reciprocity theorem and adopting our algebraic tools, we provide an algebraic proof for (Problem E 3339, Duran \cite{Duran}) in Subsection \ref{computation}. An analytic proof was by Gessel \cite{Gessel}.

\subsection{Our Methodology}\label{our_methodology}
We first create an algebraic framework by generalizing the  Heaviside's cover-up method. Given distinct numbers $c_{1},\cdots, c_{k}$ and $h(x)$ a polynomial with $\textnormal{deg}(h)<k$, suppose the expression for the partial fraction is 
\begin{equation}\label{heaviside}
h(x)\prod_{j=1}^{k}\frac{1}{x-c_{j}}=\sum_{j=1}^{k}\frac{C_{j}}{x-c_{j}},
\end{equation}
then the cover-up method suggests: to obtain $C_{i}$ we need to evaluate the left hand side of (\ref{heaviside}) at $x=c_{i}$ seemingly `covering' the $(x-c_{i})$ factor. Thus, we obtain 
$
C_{i}=h(c_{i})\prod^{k}_{j=1,j\neq i}\frac{1}{c_{i}-c_{j}}.$ 

Heaviside method is given only for the linear case. Man \cite{man} provided an iterative procedure, based on long division and substitution, to extend the method to the quadratic case. His method does not require factorization into linear factors, differentiation or solving simultaneous equations. This paper gives a rigorous extension to higher degree polynomials the method given in \cite{man}. We rely on localization of rings, substitution rule associated with the generalized Remainder Theorem and the B\'{e}zout's identity. 

The key to rigorously extend this method to higher degree polynomials is our symbolic evaluation operator based on the principles of localization and substitution. Given non-constant polynomials $r(x), s(x), a(x)\in \mathbb{Q}[x]$ and $s(x),a(x)$ are relatively prime, we define the evaluation of the fraction $\frac{r(x)}{s(x)}$ with respect to $a(x)$ by
$$
\res{\frac{r(x)}{s(x)}}{a(x)} = \rem{(\alpha(x)r(x))}{a(x)},
$$ 
where $\alpha(x) s(x)\equiv 1 \textnormal{ mod }a(x)$ and $\textnormal{rem}$  is the polynomial remainder operator. Our extension result is stated below and proved in Section \ref{sec_PFs}.
\begin{thm}[Extended Cover-Up Method]\label{cover_up}
Let $p_{1}(x),\cdots, p_{n}(x)\in \mathbb{Q}[x]$ be pairwise relatively prime polynomials and let $f(x)\in \mathbb{Q}[x]$ satisfying $deg(f)< deg(p_{1}\cdots p_{k})$. Then, we have the following identity 
$$
f(x)\prod_{j=1}^{k}\frac{1}{p_{j}(x)}=\sum^{k}_{j=1}\frac{\res{f(x)/g_{j}(x)}{p_{j}(x)}}{p_{j}(x)},
$$
where $g_{j}(x)=p_{1}(x)\cdots\widehat{p_{j}(x)}\cdots p_{k}(x)$ and $\ \widehat{ }$ refers to dropping the corresponding factor.  
\end{thm}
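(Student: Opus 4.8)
The plan is to reduce the identity to the classical partial fraction decomposition over $\mathbb{Q}[x]$ and then to recognize each numerator as the asserted evaluation. First I would secure \emph{existence and uniqueness} of a decomposition
$$
f(x)\prod_{j=1}^{k}\frac{1}{p_{j}(x)}=\sum_{j=1}^{k}\frac{N_{j}(x)}{p_{j}(x)},\qquad \deg N_{j}<\deg p_{j}.
$$
This follows from pairwise coprimality of the $p_{j}$, which gives the ring isomorphism $\mathbb{Q}[x]/(p_{1}\cdots p_{k})\cong\prod_{j=1}^{k}\mathbb{Q}[x]/(p_{j})$ by the Chinese Remainder Theorem; equivalently, one peels off a single factor at a time by induction on $k$, at each stage invoking B\'{e}zout's identity $u\,p_{k}+v\,g_{k}=1$ to split the fraction. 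The hypothesis $\deg f<\deg(p_{1}\cdots p_{k})$ is exactly what guarantees that representatives $N_{j}$ of degree below $\deg p_{j}$ suffice, so there is no polynomial part. The task then reduces to computing these $N_{j}$ explicitly.

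With the decomposition in hand, I would clear denominators to obtain the polynomial identity $f(x)=\sum_{j=1}^{k}N_{j}(x)\,g_{j}(x)$, where $g_{j}(x)=\prod_{i\neq j}p_{i}(x)$. The decisive observation is that for every $i\neq j$ the product $g_{i}(x)$ still carries the factor $p_{j}(x)$, so $g_{i}\equiv 0\pmod{p_{j}}$, and reducing the identity modulo $p_{j}$ collapses the entire sum to a single surviving term:
$$
f(x)\equiv N_{j}(x)\,g_{j}(x)\pmod{p_{j}(x)}.
$$
Since the $p_{i}$ are pairwise coprime, $g_{j}$ is coprime to $p_{j}$ and hence a unit in $\mathbb{Q}[x]/(p_{j})$; choosing $\alpha_{j}$ with $\alpha_{j}g_{j}\equiv 1\pmod{p_{j}}$ and multiplying through gives $N_{j}\equiv\alpha_{j}f\pmod{p_{j}}$. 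Because $\deg N_{j}<\deg p_{j}$, the polynomial $N_{j}$ coincides with the remainder $\rem{(\alpha_{j}f)}{p_{j}}$, which is by definition exactly $\res{f(x)/g_{j}(x)}{p_{j}(x)}$, completing the identification.

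The hard part will not be the algebra but the careful ring-theoretic bookkeeping that legitimizes the evaluation operator in this generality. One must verify that $g_{j}$ and $p_{j}$ are genuinely relatively prime so that the operator is even defined under its stated hypotheses, and confirm that the inverse $\alpha_{j}$, though only one representative of its coset modulo $p_{j}$, yields a well-defined remainder independent of the choice. This is precisely where localization of $\mathbb{Q}[x]$ at the multiplicative set generated by $g_{j}$, together with the substitution form of the generalized Remainder Theorem, carries the weight, while B\'{e}zout's identity furnishes the explicit inverse. Once these foundational points are pinned down, the degree constraint forces $N_{j}$ uniquely and the formula drops out.
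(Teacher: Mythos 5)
Your proof is correct, and it reaches the identity by a genuinely different route than the paper's. You take the classical path: first secure existence of the decomposition $f/\prod_{j}p_{j}=\sum_{j}N_{j}/p_{j}$ with $\deg N_{j}<\deg p_{j}$ (CRT or induction on Bézout, the degree hypothesis killing the polynomial part), then clear denominators and reduce $f=\sum_{i}N_{i}g_{i}$ modulo $p_{j}$, where every cross term dies because $p_{j}\mid g_{i}$ for $i\neq j$; pairwise coprimality makes $g_{j}$ a unit mod $p_{j}$, so $N_{j}\equiv\alpha_{j}f\pmod{p_{j}}$, and the degree bound forces $N_{j}=\rem{\alpha_{j}f}{p_{j}}=\res{f/g_{j}}{p_{j}}$ by the very definition of eval. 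The paper never invokes the classical decomposition at all: it first reduces to $f=1$ using Lemma \ref{lem_rem} (whose proof—the quotients $q_{i}$ must sum to zero—is essentially the same degree argument you use to rule out a polynomial part), then starts from a bare Bézout identity $\sum a_{j}g_{j}=1$ and transforms it into the asserted identity via the Fundamental Lemma \ref{lem_bez2} together with the substitution rule of Lemma \ref{lem_res}(3), which collapses $\sum_{i}a_{i}g_{i}$ to $a_{j}g_{j}$ inside $\textnormal{eval}(\,\cdot\,;p_{j})$. Your route is shorter and more self-contained, needing only standard facts about $\mathbb{Q}[x]$ plus the definition of eval, and it carries no side conditions beyond pairwise coprimality (the Fundamental Lemma's hypothesis $\gcd(f,p_{i})=1$ is satisfied only trivially in the paper because its $f$ is $1$). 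What the paper's route buys is reusable machinery: Lemmas \ref{lem_rem} and \ref{lem_bez2} are exercised again later (e.g., in deriving the partial fraction for $1/((1-x)\Psi_{m}(x))$ in Section \ref{di_lemma}), and the proof doubles as a demonstration of the paper's broader theme that eval converts linear identities into Bézout identities. One point you could make explicit rather than delegate to localization: independence of the choice of representative $\alpha_{j}$ is a one-line congruence argument, since $\alpha_{j}'\equiv\alpha_{j}\pmod{p_{j}}$ gives $\alpha_{j}'f\equiv\alpha_{j}f\pmod{p_{j}}$ and hence equal remainders.
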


Employing the extended cover-up method we will now demonstrate our $q$-partial fraction approach. In fact, for our purpose we just need to perform eval on two specific polynomials, namely $\Psi_{n}(x)^{r}$ and $(1-x)^{k}$, where 
$$
\Psi_{n}(x)=\frac{1-x^{n}}{1-x}=x^{n-1}+x^{n-2}+\cdots+x+1.
$$
As explored in Section \ref{sec_psi}, $\Psi_{n}(x)$ fruitfully connects on one hand with the algebraic object eval and on the other with a number theoretic object, the so-called degenerate Bernoulli numbers. 

To explain our methodology we illustrate our procedure for $q$-partial fractions for the simple case $\{n_{1},n_{2}\}$ relatively prime numbers, which implies $\Psi_{n_{1}}(x)$ and $\Psi_{n_{2}}(x)$ are relatively prime. We, therefore, write the generating function as a product of irreducible factors
$$
\frac{1}{(1-x^{n_{1}})(1-x^{n_{2}})}=\frac{1}{(1-x)^{2}\Psi_{n_{1}}(x)\Psi_{n_{2}}(x)}.
$$  
By a direct application of the extended cover-up method we obtain 
\begin{equation}\label{k2case}
\frac{1}{(1-x)^{2}\Psi_{n_{1}}(x)\Psi_{n_{2}}(x)} = \frac{g_{0}(x)}{(1-x)^{2}}+\frac{g_{1}(x)}{(1-x^{n_{1}})}+\frac{g_{2}(x)}{(1-x^{n_{2}})},
\end{equation}
where we multiplied the second and third terms by $(1-x)$ in the numerator and denominator,  
$$
g_{0}(x)=\textnormal{eval}((\Psi_{n_{1}}(x)\Psi_{n_{2}}(x))^{-1}; (1-x)^{2})=c_{0}+c_{1}(1-x)
$$
and 
$$
g_{j}(x)=(1-x)\textnormal{eval}\left(\frac{1}{(1-x)(1-x^{n_{i}})}; \Psi_{n_{j}}(x)\right),
$$ 
for $i,j\in \{1,2\}$ and $i\neq j$. Denote $g_{j}(x)=\frac{1}{n_{j}}\sum^{n_{j}-1}_{\alpha=0}c_{\alpha}^{(j)}x^{\alpha}$.\\
Thus we have a $q$-partial fraction
\begin{equation}\label{two_case}
\frac{1}{(1-x^{n_{1}})(1-x^{n_{2}})}=\frac{c_{0}}{(1-x)^{2}}+\frac{c_{1}}{(1-x)}+\sum_{j=1}^{2}\frac{\sum^{n_{j}-1}_{\alpha=0}c_{\alpha}^{(j)}x^{\alpha}}{n_{j}(1-x^{n_{j}})}.
\end{equation}
With the $q$-partial fraction and using the power series expansion
\begin{equation}\label{power_series}
\frac{1}{(1-x)^{l}}=\sum^{\infty}_{t=0}\begin{pmatrix}
t+l-1\\
t\\
\end{pmatrix}
x^{t},
\end{equation}
we can easily obtain the denumerant, given by coefficient of $x^{t}$ in (\ref{two_case}),
$$
d(t;n_{1},n_{2}) = 
c_{0}(t+1)+c_{1}+\frac{1}{n_{1}}c^{(1)}_{t\%n_{1}}+\frac{1}{n_{2}}c^{(2)}_{t\%n_{2}}.
$$

\begin{table}[!t]
\begin{center}
\begin{tabular}{||c| c||}
\hline
\hline
$\textnormal{eval}\left(\cdot;\  \cdot\right)$ & Result \\
\hline
\hline
$\textnormal{eval}\left(x^{k};1-x^{m}\right)$ &  $x^{k\%m}$\\
\hline
$\textnormal{eval}\left(x^{k};\Psi_{m}(x)\right)$ &  $\left\{\begin{array}{@{}l@{\thinspace}l}
       x^{k\%m} & \text{   if   } k\%m \neq m-1 \\
       -(x^{m-2}+\cdots+1)  & \text{   if   } k\%m=m-1 \\
     \end{array}\right.$\\
\hline
$\res{\frac{1}{\Psi_{n}(x)}}{\Psi_{m}(x)}$ & $\rem{\sum_{j=0}^{a-1}x^{jn\%m}}{\Psi_{m}(x)},\ an\equiv 1\textnormal{ mod } m$\\
\hline
$\res{\frac{1}{(1-x)^{k}}}{\Psi_{m}(x)}$ & $\frac{(-1)^{k}}{k!}\tilde{\beta}_{k}(m,x)\;$ (See Equation \ref{fkm_rdb})\\
\hline 
$\res{\frac{1}{\Psi_{m}(x)^{r}}}{(1-x)^{k+1}}$ & $\frac{1}{m}\sum_{j=0}^{k}\frac{(-1)^{j}}{j!}\tilde{\beta}^{(r)}_{j}(m)(1-x)^{j}$\\
\hline
$\res{\frac{1}{\Psi_{n}(x)}}{\Psi_{m}(x)^{k}}$ & Use $\textnormal{eval}({\frac{1}{\Psi_{n}}(x)};\Psi_{m}(x))$ and Theorem \ref{pow_k_eval}.\\
\hline
\hline
\end{tabular}
\end{center}
\caption{\label{eval_table}Result of the eval operator for some essential functions. Here $\textnormal{gcd}(m,n)=1$, $ r\geq 1$ and $k\geq 0$. These results are proved in Section \ref{sec_psi} and Section \ref{di_lemma}.}    
\end{table}

Further, taking the $t^{th}$ term in the Finite Fourier series expansion (see \cite{Beck}) of the terms $g_{j}(x)/(1-x^{n_{j}})$ we obtain the decomposition
$$
d(t; n_{1},n_{2}) = 
\underbrace{c_{0}(t+1)+c_{1}}_{\textnormal{Polynomial Part}}\ +\ \underbrace{\frac{1}{n_{1}}\sum^{n_{1}-1}_{r=1}g_{1}(\xi_{1}^{r})\xi_{1}^{-rt}+\frac{1}{n_{2}}\sum^{n_{2}-1}_{r=1}g_{2}(\xi_{2}^{r})\xi_{2}^{-rt}}_{\textnormal{Periodic Part}},
$$
where $\xi_{j}=e^{2\pi i/n_{j}}$. Note that the term for $\xi_{j}^{0}=1$ vanishes in the periodic part of the above equation as $(1-x)$ is a factor of $g_{j}(x)$. By a direct extension of the discussion above we obtain the main result Theorem \ref{main_qpf}. The periodic terms are the Sylvester waves corresponding to the roots of unity $\xi_{1}$ and $\xi_{2}$. These terms can be expressed in a pleasant Fourier-Dedekind sum form - thanks to the eval operator! 

For the sake of showing how periodic terms are expressed as Fourier-Dedekind sums let's consider a slightly more general case  $\textbf{A}=(1,\cdots,1,n_{1},\cdots, n_{k})$ with $m$ ones, $n_{1},\cdots, n_{k}$ pairwise relatively prime numbers, $n_{j}\geq 2$ for $1\leq j \leq k$ and denoting $g_{j}(x)=\sum^{n_{j}-1}_{i=0}c_{i}^{(j)}x^{i}$ in (\ref{gj_term_main_qpf}) we obtain 
\begin{eqnarray}
\nonumber d(t;\textbf{A})&=&\sum^{m+k-1}_{j=0}c_{j}
\begin{pmatrix}
t+m+k-j-1\\
t\\
\end{pmatrix}
+\sum^{k}_{j=1}\frac{1}{n_{j}}c^{(j)}_{t\%n_{j}}\\
\nonumber &=& \underbrace{\sum^{m+k-1}_{j=0}c_{j}
\begin{pmatrix}
t+m+k-j-1\\
t\\
\end{pmatrix}}_{\textnormal{Polynomial Part}} + \underbrace{\sum_{j=1}^{k}\frac{1}{n_{j}}\sum^{n_{j}-1}_{r=1}g_{j}(\xi_{j}^{r})\xi_{j}^{-rt}}_{\textnormal{Periodic Part}},
\end{eqnarray}
where $\xi_{j}=e^{2\pi i/n_{j}}$, which is a quasi-polynomial with periodicities in the constant term.\\
By Lemma \ref{eval_eval}, if $\textnormal{gcd}(g(x),\Psi_{m}(x))=1$ we have a neat substitution  
\begin{equation}\label{neat_sub}
\left.\textnormal{eval}\left(\frac{f(x)}{g(x)}; \Psi_{m}(x)\right)\right\rvert_{x=\xi_{m}}=\frac{f(\xi_{m})}{g(\xi_{m})}\quad \textnormal{ for }\quad \xi_{m}=e^{2\pi i/m}.
\end{equation}
Therefore, applying (\ref{neat_sub}) on the equation (\ref{gj_term_main_qpf}) we have the $j^{th}$ periodic term 
$$
\frac{1}{n_{j}}\sum^{n_{j}-1}_{r=1}g_{j}(\xi_{j}^{r})\xi_{j}^{-rt}=\frac{1}{n_{j}}\sum_{r=1}^{n_{j}-1} \frac{p(\xi_{j}^{r})\xi_{j}^{-rt}}{(1-\xi_{j}^{r})^{m}(1-\xi_{j}^{rn_{1}})\cdots \widehat{(1-\xi_{j}^{rn_{j}})}\cdots (1-\xi_{j}^{rn_{k}})},
$$
where $\widehat{ }$ refers to dropping the corresponding term. This formula motivates us to generalize the Fourier-Dedekind sum as given in Definition \ref{gen_FDS}. In the notation of the generalized Fourier-Dedekind sum we have  
\begin{equation}\label{denumerant_fds}
d(t;\textbf{A})=\sum^{m+k-1}_{j=0}c_{j}
\begin{pmatrix}
t+m+k-j-1\\
t\\
\end{pmatrix} +\sum_{j=1}^{k}S^{(m,1, p(x))}_{-t}(n_{1},\cdots,\widehat{n_{j}},\cdots,n_{k}; n_{j}),    
\end{equation}
where again $\widehat{ }$ refers to dropping the term. Using the equation (\ref{denumerant_fds}) we prove the Reciprocity Theorem \ref{k_dim_recip_thm}.

\section{Our Algebraic Approach for Partial Fractions: Extended Cover-Up Method}\label{sec_PFs}

In this section we provide a rigorous treatment of the eval operator and, subsequently, prove the extended cover-up method given in Theorem \ref{cover_up}. In order to achieve this we briefly review certain notation and terminology from commutative algebra, and formalize the polynomial remainder operator. 

An element $p(x)\in \mathbb{Q}[x]$ is a polynomial of the form 
$$
p(x)=\alpha_{m}x^{m}+\cdots+\alpha_{0},\ \  \alpha_{i}\in \mathbb{Q}, \textnormal{ for } i=0,\cdots, m.
$$ 
It is well known that $\mathbb{Q}[x]$ is a principal ideal domain. We denote the ideal generated by a polynomial with the same letter but in a italic font, for instance the prime ideal generated by $p(x)$ is denoted $\mathfrak{p}$. Given a non-empty multiplicative set $S$ (i.e., if $a,b\in S$ then $ab\in S$, and $0\notin S$) of $\mathbb{Q}[x]$, the ring of fractions of $\mathbb{Q}[x]$ with respect to $S$ will be denoted by $S^{-1}\mathbb{Q}[x]$ and is defined by 
$$
S^{-1}\mathbb{Q}[x]=\left\{\frac{f(x)}{g(x)}\in \mathbb{Q}(x):f(x)\in \mathbb{Q}[x] \textnormal{ and } g(x)\in S\right\}.
$$  
Let $a(x)$ factorize as 
\begin{equation}\label{ax}
a(x)=p_{1}(x)^{e_1}\cdots p_{k}(x)^{e_k},
\end{equation}
where $p_{1}(x),\cdots, p_{k}(x)$ are distinct irreducible polynomials. We localize the ring $\mathbb{Q}[x]$ about the multiplicative set 
$
S_{\mathfrak{a}}=\mathbb{Q}[x]-(\mathfrak{p}_{1}\cup\cdots \cup\mathfrak{p}_{k}).
$ 
The localization $S^{-1}_{\mathfrak{a}}\mathbb{Q}[x]$ of $\mathbb{Q}[x]$ is \textit{the set of all rational functions with denominators not sharing any non-trivial common factor with $a(x)$}. It can be easily shown that $S^{-1}_{\mathfrak{a}}\mathbb{Q}[x]=\cap_{i=1}^{k}S^{-1}_{\mathfrak{p}_{i}}\mathbb{Q}[x]$. 

For the purpose of defining the eval operator we will express the remainder operator of $f(x)$ divided by $p(x)$ (denoted $\rem{f(x)}{p(x)}$) as a composition of two functions: the function $\pi_{\mathfrak{p}}:\mathbb{Q}[x]\rightarrow \mathbb{Q}[x]/\mathfrak{p}$ defined 
$$
\pi_{\mathfrak{p}}(f(x))=f(x)+\mathfrak{p},
$$
and $\eta:\mathbb{Q}[x]/\mathfrak{p}\rightarrow \mathbb{Q}[x]$ defined
$$
\eta(A)=
\begin{cases} 
0 & \textnormal{ if } 0\in A\\
\textnormal{smallest degree polynomial in } A & \textnormal{ if } 0\notin A. 
\end{cases}
$$
The following lemma can be easily shown. 

\begin{lem}\label{rem_lem} The function $\eta\circ \pi_{\mathfrak{p}}$ is a homomorphism with polynomial addition and the following equality holds: 
$(\rem{f(x)}{p(x)})=\eta\circ \pi_{\mathfrak{p}}(f(x)).$  
\end{lem}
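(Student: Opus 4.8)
The plan is to show that $\eta\circ\pi_{\mathfrak{p}}$ is nothing but the remainder map, from which both the identity and the additivity fall out. Fix $f\in\mathbb{Q}[x]$. Since $\mathbb{Q}[x]$ is a Euclidean domain under the degree function, the division algorithm produces unique $q,r$ with $f=qp+r$ and either $r=0$ or $\deg r<\deg p$; by definition $r=\rem{f}{p}$. Because $f-r=qp\in\mathfrak{p}$, the remainder $r$ is an element of the coset $A:=\pi_{\mathfrak{p}}(f)=f+\mathfrak{p}$. The whole lemma therefore reduces to proving the single claim $\eta(A)=r$, since additivity will then follow formally.

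The core step is this claim, which I would prove by matching the two branches of the definition of $\eta$ against the two possibilities for $r$. If $p\mid f$ then $r=0$, so $0\in A$ and the first branch gives $\eta(A)=0=r$. If $p\nmid f$ then $r\neq0$ and $0\notin A$, so the second branch requires $r$ to be \emph{the} polynomial of least degree in $A$; here lies the only real content. To see it, take any $g\in A$ with $g\neq r$. Then $g-r\in\mathfrak{p}$ is a nonzero multiple of $p$, so $\deg(g-r)\geq\deg p$; combined with $\deg r<\deg p$ this forces $\deg r<\deg(g-r)$, whence $\deg g=\deg(g-r)\geq\deg p>\deg r$. Thus every competitor has strictly larger degree, so $r$ is the unique minimal-degree element of $A$ and $\eta(A)=r$. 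This establishes $\rem{f}{p}=\eta\circ\pi_{\mathfrak{p}}(f)$.

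For the homomorphism property I would invoke that $\pi_{\mathfrak{p}}$ is the canonical ring surjection and hence additive, so $\pi_{\mathfrak{p}}(f+g)=\pi_{\mathfrak{p}}(f)+\pi_{\mathfrak{p}}(g)$. Writing $r_f=\rem{f}{p}$ and $r_g=\rem{g}{p}$, each of degree below $\deg p$, their sum $r_f+r_g$ again has degree below $\deg p$ and lies in the coset $\pi_{\mathfrak{p}}(f+g)$; applying the uniqueness just proved to $f+g$ yields $\eta(\pi_{\mathfrak{p}}(f+g))=r_f+r_g$, which is exactly the additivity $(\eta\circ\pi_{\mathfrak{p}})(f+g)=(\eta\circ\pi_{\mathfrak{p}})(f)+(\eta\circ\pi_{\mathfrak{p}})(g)$. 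The zero-coset convention stays consistent even under cancellation: if $r_f+r_g=0$ then $0$ belongs to $\pi_{\mathfrak{p}}(f+g)$ and $\eta$ returns $0=r_f+r_g$. The main obstacle is precisely the well-definedness of $\eta$, i.e. the uniqueness of the minimal-degree representative; everything else is bookkeeping. I would take care that the minimal degree sits strictly below $\deg p$ so that the degree bound on nonzero multiples of $p$ is decisive, and would double-check the edge cases (the zero coset and the cancellation $r_f+r_g=0$) so that the two-case definition of $\eta$ agrees throughout with the single remainder formula.
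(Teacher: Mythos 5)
Your proof is correct: the paper offers no proof of this lemma at all (it is dismissed with ``can be easily shown''), and your argument---division algorithm, uniqueness of the minimal-degree representative in each coset via the fact that nonzero multiples of $p(x)$ have degree at least $\deg p$, and additivity deduced from that uniqueness---is precisely the routine verification the paper intends the reader to supply. You also correctly handle the edge cases (the zero coset and the cancellation $r_f+r_g=0$) where the two branches of the definition of $\eta$ must be checked to agree, so there are no gaps.
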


In this work, for the purpose of derivations, a crucial role is played by the substitution rules. It is interesting (and also unfortunate) to notice that not enough emphasis was given in the literature about the substitution rule of the remainder operator. Indeed, a generalization of substitution rule to higher degree polynomials follows in similar lines to that of the Remainder Theorem \cite[Theorem 2.7]{laudano}. 

\begin{thm}[Generalized Remainder Theorem \cite{laudano}]
Let $f(x),g(x)\in \mathbb{Q}[x]$ with $g(x)=x^{k}+a_{k-1}x^{k-1}+\cdots+a_{1}x+a_{0}$ and suppose $k<\textnormal{deg}(f)$. Then, the remainder of $f(x)$ modulo $g(x)$ can be found by substituting the polynomial $s(x)=-a_{k-1}x^{k-1}-\cdots-a_{1}x-a_{0}$ for $x^{k}$ in $f(x)$, and iterating this substitution in the polynomial thus obtained until its degree becomes less than $k$.
\end{thm}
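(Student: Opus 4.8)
The plan is to recognize the substitution rule as nothing more than a reorganization of Euclidean division by the monic polynomial $g(x)$, and to extract from it the two facts that matter: that each substitution preserves the residue class modulo $g(x)$, and that, when applied to the leading term, it strictly lowers the degree. The single algebraic identity driving everything is that, since $g(x)=x^{k}+a_{k-1}x^{k-1}+\cdots+a_{0}$ and $s(x)=-a_{k-1}x^{k-1}-\cdots-a_{0}$, we have $x^{k}-s(x)=g(x)$, so that $x^{k}\equiv s(x)\pmod{g(x)}$; multiplying by $x^{m-k}$ gives $x^{m}\equiv x^{m-k}s(x)\pmod{g(x)}$ for every $m\ge k$. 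Hence replacing the leading monomial $c_{m}x^{m}$ by $c_{m}x^{m-k}s(x)$ alters the polynomial only by the multiple $c_{m}x^{m-k}g(x)$ of $g(x)$; this is precisely the elementary identity $x^{m}-x^{m-k}g(x)=x^{m-k}s(x)$, which exhibits one substitution step as one step of long division. Consequently every substitution leaves the residue class modulo $g(x)$ unchanged.

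Next I would argue termination and correctness by induction on $\textnormal{deg}(f)$. If $\textnormal{deg}(f)<k$ there is nothing to do and $f$ is already its own remainder. If $m=\textnormal{deg}(f)\ge k$, the substitution applied to the leading term replaces $c_{m}x^{m}$ by $c_{m}x^{m-k}s(x)$, whose degree is at most $(m-k)+(k-1)=m-1$; since no higher-degree term is introduced, the resulting polynomial $f_{1}(x)$ satisfies $\textnormal{deg}(f_{1})<\textnormal{deg}(f)$ and $f_{1}\equiv f\pmod{g(x)}$. By the induction hypothesis the iterated substitution applied to $f_{1}$ terminates in a polynomial $r(x)$ of degree $<k$ with $r\equiv f_{1}\equiv f\pmod{g(x)}$. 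Because $g(x)$ is monic, division of $f(x)$ by $g(x)$ has a \emph{unique} remainder of degree $<k$ in its residue class; since $r(x)$ has degree $<k$ and is congruent to $f(x)$ modulo $g(x)$, we conclude $r(x)=\rem{f(x)}{g(x)}$, which is the claim.

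The only genuinely delicate point is the well-definedness and termination of the iteration: a polynomial of degree $\ge k$ can be expressed through $x^{k}$ in more than one way, so I would anchor the substitution to the current leading term (equivalently, carry it out top-down). The degree-drop computation above then guarantees that the maximal degree strictly decreases at each step, hence falls below $k$ after finitely many steps. The order in which substitutions are performed turns out to be immaterial, since every admissible substitution only subtracts a multiple of $g(x)$, and within a fixed residue class modulo $g(x)$ there is a unique representative of degree $<k$. This uniqueness, which rests on the monic hypothesis on $g(x)$ underlying division with remainder, is exactly what converts the substitution bookkeeping into a proof.
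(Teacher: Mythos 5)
Your proposal is correct and follows essentially the same route as the paper, which justifies the theorem (cited from Laudano) by noting that repeated substitutions keep the polynomial in the same coset of the ideal generated by $g(x)$ and that the process terminates at the unique minimal-degree representative, picked out by the map $\eta$ of Lemma \ref{rem_lem}. Your write-up simply makes that one-sentence justification fully explicit: the identity $x^{m}-x^{m-k}g(x)=x^{m-k}s(x)$ is the coset-preservation step, the induction on degree is the termination argument, and the uniqueness of the degree-$<k$ representative (from monic division) is what the paper delegates to $\eta$.
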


This substitution rule could also be justified by Lemma \ref{rem_lem} in the sense that repeated substitutions retain the resulting element in the coset, given by $\pi_{\mathfrak{g}}$, and the substitution process terminates with the smallest degree polynomial, picked by $\eta$.    

Equipped with the above substitution rule and using some tricks for manipulation of polynomials one can obtain efficient algorithms to determine the remainder.  

\begin{exmp}
The remainder of $x^{101}+5x^{31}$ by $\Psi_{3}(x)=x^{2}+x+1$ can be obtained by first observing that the polynomial $x^{2}+x+1$ divides $x^{3}-1$ so before substituting $x^{2}$ with $-x-1$ we can first substitute $x^{3}$ with $1$ leading us to 
\begin{eqnarray}
\nonumber \rem{(x^{101}+5x^{31})}{(x^{2}+x+1)}&=& \rem{(x^{101\%3}+5x^{31\%3})}{(x^{2}+x+1)}\\
\nonumber &=& \rem{(x^2+5x)}{(x^{2}+x+1)}\\
\nonumber &=& (-x-1)+5x=4x-1.
\end{eqnarray}
Here we used the fact that the substitution of $x^3$ by $1$ in the factors of $x^k$ we obtain $x^{k\%3}$, where $\%$ is the remainder of integers. This observation leads us to the substitution rule given in Lemma \ref{cor_phi} for the polynomials $\Psi_{m}(x)$.
\end{exmp}

Let $r(x),s(x)$ and $a(x)$ be in the ring of polynomials $\mathbb{Q}[x]$ such that $s(x)$ and $a(x)$ are relatively prime. By B\'{e}zout's identity there exist two polynomials $\alpha(x),\beta(x)\in \mathbb{Q}[x]$ such that 
\begin{equation}\label{a_alpha}
\alpha(x)s(x)+\beta(x)a(x)=1.
\end{equation}

\begin{defn}\label{eval}
Given non-constant polynomials $r(x), s(x), a(x)\in \mathbb{Q}[x]$ and $a(x),s(x)$ satisfy (\ref{a_alpha}), we define the evaluation of the fraction $\frac{r(x)}{s(x)}$ modulo $a(x)$ from $S^{-1}_{\mathfrak{a}}\mathbb{Q}[x]$ to $\mathbb{Q}[x]$ as
$$
\res{\frac{r(x)}{s(x)}}{a(x)} = \rem{(\alpha(x)r(x))}{a(x)},
$$ 
where $\alpha(x)$ is the inverse of $s(x)$ modulo $a(x)$, i.e. $\alpha(x) s(x)=1 \textnormal{ mod }a(x)$. 
\end{defn}
 
The eval operator is well-defined due the following easy to prove isomorphism. 
\begin{lem}\label{isomorphism}
The rings $S_{\mathfrak{a}}^{-1}\mathbb{Q}[x]/S_{\mathfrak{a}}^{-1}\mathfrak{a}$ and $\mathbb{Q}[x]/\mathfrak{a}$ are isomorphic. 
\end{lem}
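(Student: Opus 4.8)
The plan is to exhibit an explicit ring homomorphism from $\mathbb{Q}[x]$ onto the localized quotient, identify its kernel, and then invoke the first isomorphism theorem. Concretely, I would consider the composite
$$
\phi\colon \mathbb{Q}[x] \xrightarrow{\;f\mapsto f/1\;} S_{\mathfrak{a}}^{-1}\mathbb{Q}[x] \longrightarrow S_{\mathfrak{a}}^{-1}\mathbb{Q}[x]/S_{\mathfrak{a}}^{-1}\mathfrak{a},
$$
which is a ring homomorphism, being the composition of the canonical localization map with the quotient map. The whole claim then reduces to proving that $\phi$ is surjective and that $\ker\phi=\mathfrak{a}$.

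For surjectivity, a typical class in the target is represented by $r(x)/s(x)$ with $s(x)\in S_{\mathfrak{a}}$. Since $s(x)$ lies in no $\mathfrak{p}_{i}$, it shares no irreducible factor with $a(x)=p_{1}(x)^{e_1}\cdots p_{k}(x)^{e_k}$, so $\gcd(s,a)=1$; by B\'{e}zout there is $\alpha(x)$ with $\alpha s\equiv 1 \pmod{a}$, exactly as in (\ref{a_alpha}). Then $r/s-\alpha r/1=r(1-\alpha s)/s$ has numerator $r(1-\alpha s)\in\mathfrak{a}$ and denominator in $S_{\mathfrak{a}}$, hence represents an element of $S_{\mathfrak{a}}^{-1}\mathfrak{a}$. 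Therefore $r/s$ and the image $\phi(\alpha r)$ of the genuine polynomial $\alpha r\in\mathbb{Q}[x]$ agree modulo $S_{\mathfrak{a}}^{-1}\mathfrak{a}$, which gives surjectivity. This step uses precisely the coprimality-plus-B\'{e}zout mechanism that underlies the eval operator.

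For the kernel, $f\in\ker\phi$ means $f/1\in S_{\mathfrak{a}}^{-1}\mathfrak{a}$, i.e. $f/1=a(x)h(x)/s(x)$ for some $h\in\mathbb{Q}[x]$ and $s\in S_{\mathfrak{a}}$. The main obstacle, and really the only delicate point, is translating this equality in the ring of fractions back into a divisibility statement in $\mathbb{Q}[x]$: by the definition of equality in a localization there is $u\in S_{\mathfrak{a}}$ with $u(sf-ah)=0$, and since $\mathbb{Q}[x]$ is an integral domain and $u\neq 0$ one may cancel $u$ to obtain $sf=ah$, so $a\mid sf$. Because $\gcd(s,a)=1$, this forces $a\mid f$, i.e. $f\in\mathfrak{a}$; the reverse inclusion $\mathfrak{a}\subseteq\ker\phi$ is immediate. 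With $\phi$ surjective and $\ker\phi=\mathfrak{a}$, the first isomorphism theorem yields $\mathbb{Q}[x]/\mathfrak{a}\cong S_{\mathfrak{a}}^{-1}\mathbb{Q}[x]/S_{\mathfrak{a}}^{-1}\mathfrak{a}$, which is the assertion. A more structural alternative is to observe that every $s\in S_{\mathfrak{a}}$ becomes a unit in $\mathbb{Q}[x]/\mathfrak{a}$ (again by B\'{e}zout), so localizing the quotient at the image of $S_{\mathfrak{a}}$ changes nothing; combined with the standard fact that localization commutes with quotients, this gives the isomorphism at once. I prefer the explicit map above, however, since it keeps every computation inside the PID $\mathbb{Q}[x]$ and meshes directly with the remainder-based constructions used elsewhere in the paper.
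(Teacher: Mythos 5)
Your proof is correct and complete. The paper itself gives no argument for this lemma---it is stated as an ``easy to prove'' fact just before the eval operator is written as a composition of maps---so your write-up simply supplies the verification the paper omits, and it does so by the standard route: the explicit homomorphism $\phi$, surjectivity via B\'{e}zout (the same coprimality mechanism as in (\ref{a_alpha})), identification of $\ker\phi=\mathfrak{a}$, and the first isomorphism theorem. You also correctly isolate the one genuinely delicate step, namely converting the equality $f/1=a(x)h(x)/s(x)$ in the localization into the divisibility $a\mid sf$ (using that $0\notin S_{\mathfrak{a}}$ and $\mathbb{Q}[x]$ is a domain) and then cancelling $s$ by coprimality; your structural alternative, that localization commutes with quotients and every element of $S_{\mathfrak{a}}$ becomes a unit in $\mathbb{Q}[x]/\mathfrak{a}$, is equally valid.
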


Suppose $h:S_{\mathfrak{a}}^{-1}\mathbb{Q}[x]/S_{\mathfrak{a}}^{-1}\mathfrak{a}\rightarrow \mathbb{Q}[x]/\mathfrak{a}$ is the isomorphism and using the functions $\pi_{\mathfrak{p}}$ and $\eta$, we can now state the eval function as a sequence of compositions
$$
 S_{\mathfrak{a}}^{-1}\mathbb{Q}[x] \xrightarrow[]{\pi_{S_{\mathfrak{a}}^{-1}\mathfrak{a}}} S_{\mathfrak{a}}^{-1}\mathbb{Q}[x]/S_{\mathfrak{a}}^{-1}\mathfrak{a}\xrightarrow[]{h}\mathbb{Q}[x]/\mathfrak{a}\xrightarrow[]{\eta}\mathbb{Q}[x].
$$
By (\ref{a_alpha}) and observing that $$\frac{-\alpha(x)r(x)\beta(x)a(x)}{1-\beta(x)a(x)}\in S_{\mathfrak{a}}^{-1}\mathfrak{a}$$ we have 
\begin{eqnarray}
\nonumber \res{\frac{r(x)}{s(x)}}{a(x)} &=& \eta\circ h\circ \pi_{S_{\mathfrak{a}}^{-1}\mathfrak{a}} \left(\frac{r(x)}{s(x)}\right) =\eta\circ h\left(\frac{r(x)}{s(x)}+S_{\mathfrak{a}}^{-1}\mathfrak{a}\right)\\
\nonumber &=& \eta\circ h\left(\frac{\alpha(x)r(x)}{1-\beta(x)a(x)}+\frac{-\alpha(x)r(x)\beta(x)a(x)}{1-\beta(x)a(x)}+S_{\mathfrak{a}}^{-1}\mathfrak{a}\right). 
\end{eqnarray}
Upon simplification by using the definitions of $h$ and $\eta$, and Lemma \ref{rem_lem} we get 
\begin{eqnarray}
\nonumber \res{\frac{r(x)}{s(x)}}{a(x)} &=& \eta\circ h\left(\alpha(x)r(x)+S_{\mathfrak{a}}^{-1}\mathfrak{a}\right) \\
\nonumber &=& \eta\left(\alpha(x)r(x)+\mathfrak{a}\right) = \rem{\alpha(x)r(x)}{a(x)}.
\end{eqnarray}
Therefore, $\textnormal{eval}(\cdot,a(x))$ operator can be expressed as the homomorphism $\eta\circ h \circ \pi_{S^{-1}_{\mathfrak{a}}\mathfrak{a}}$. Hence, eval is a well defined operator which takes in a rational polynomial and gives us a polynomial. 

The following properties can also be proved easily from the definitions. 

\begin{lem} \label{lem_res}
Given $a(x), r_{i}(x), s_{i}(x)\in \mathbb{Q}[x]$ and $\textnormal{gcd}(s_{i}(x),a(x))=1$ for $i=0,1$. The following are some properties of $\textnormal{eval}$ function:
\begin{enumerate}
\item $\res{r_{0}(x)}{a(x)}=\rem{r_{0}(x)}{a(x)}$
\item $\res{\frac{r_{0}(x)r_{1}(x)}{s_{0}(x)s_{1}(x)}}{a(x)}=\rem{\left\{\res{\frac{r_{0}(x)}{s_{0}(x)}}{a(x)}\res{\\\frac{r_{1}(x)}{s_{1}(x)}}{a(x)}\right\}}{a(x)}$
\item (Substitution Rule) $\res{\frac{r_{0}(x) - p_{0}(x)a(x)}{s_{0}(x)- q_{0}(x) a(x)}}{a(x)}=\res{\frac{r_{0}(x)}{s_{0}(x)}}{a(x)}$ \textnormal{for some} $p_{0}(x),q_{0}(x)\in \mathbb{Q}[x]$; or stated in words, one can successively substitute the occurrences of $a(x)$ by $0$ in both numerator and denominator of the rational polynomial given in the first argument of eval. 
\end{enumerate}
\end{lem}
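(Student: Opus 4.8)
The plan is to reduce all three identities to the closed-form expression $\res{r(x)/s(x)}{a(x)} = \rem{(\alpha(x) r(x))}{a(x)}$, where $\alpha(x)$ is any B\'{e}zout inverse of $s(x)$ modulo $a(x)$, together with the single structural fact (established when eval was factored as $\eta\circ h\circ\pi_{S^{-1}_{\mathfrak{a}}\mathfrak{a}}$) that the projection onto $\mathbb{Q}[x]/\mathfrak{a}$ is a ring homomorphism, so that $\rem{\cdot}{a(x)}$ respects both addition and multiplication modulo $a(x)$. First I would record two preliminary observations for repeated use: (i) since $\alpha(x)$ is determined only up to the ideal $\mathfrak{a}$, the value $\rem{(\alpha(x)r(x))}{a(x)}$ is independent of the chosen B\'{e}zout coefficient, so eval is well defined; and (ii) if $\gcd(s(x),a(x))=1$ then $\gcd(s(x)-q(x)a(x),a(x))=\gcd(s(x),a(x))=1$, so every denominator appearing below is genuinely invertible modulo $a(x)$ and each eval lies in the domain of the operator.

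Property 1 is immediate: taking $s_{0}(x)=1$ forces $\alpha(x)=1$ as the inverse of $1$ modulo $a(x)$, whence $\res{r_{0}(x)}{a(x)}=\rem{(1\cdot r_{0}(x))}{a(x)}=\rem{r_{0}(x)}{a(x)}$. For Property 3 (the substitution rule) I would let $\alpha_{0}(x)$ be the inverse of $s_{0}(x)$ modulo $a(x)$; because $s_{0}(x)-q_{0}(x)a(x)\equiv s_{0}(x)\pmod{a(x)}$, observation (ii) guarantees the same $\alpha_{0}(x)$ is an inverse of the perturbed denominator, and then $\alpha_{0}(x)\bigl(r_{0}(x)-p_{0}(x)a(x)\bigr)\equiv \alpha_{0}(x)r_{0}(x)\pmod{a(x)}$ since the subtracted term is a multiple of $a(x)$. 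Taking remainders on both sides yields $\res{(r_{0}-p_{0}a)/(s_{0}-q_{0}a)}{a(x)}=\rem{(\alpha_{0}r_{0})}{a(x)}=\res{r_{0}/s_{0}}{a(x)}$.

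The only property with genuine content is Property 2, multiplicativity. Here I would let $\alpha_{0}(x),\alpha_{1}(x)$ be inverses of $s_{0}(x),s_{1}(x)$ modulo $a(x)$, and the key algebraic point is that $\alpha_{0}(x)\alpha_{1}(x)$ is an inverse of the product, since $(\alpha_{0}\alpha_{1})(s_{0}s_{1})=(\alpha_{0}s_{0})(\alpha_{1}s_{1})\equiv 1\cdot 1=1\pmod{a(x)}$. Hence by definition $\res{(r_{0}r_{1})/(s_{0}s_{1})}{a(x)}=\rem{(\alpha_{0}\alpha_{1}r_{0}r_{1})}{a(x)}$. Expanding the claimed right-hand side as $\rem{\{\rem{(\alpha_{0}r_{0})}{a(x)}\cdot\rem{(\alpha_{1}r_{1})}{a(x)}\}}{a(x)}$ and applying the multiplicativity of the remainder operator, namely $\rem{(fg)}{a}=\rem{\{\rem{f}{a}\cdot\rem{g}{a}\}}{a}$ with $f=\alpha_{0}r_{0}$ and $g=\alpha_{1}r_{1}$, the two expressions coincide.

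I expect the main obstacle to be bookkeeping rather than depth. One must verify that the ambient object is genuinely the localization $S^{-1}_{\mathfrak{a}}\mathbb{Q}[x]$, so that each fraction actually belongs to the domain of eval, and one must justify the multiplicativity of $\rem{\cdot}{a(x)}$ invoked for Property 2 even though Lemma \ref{rem_lem} explicitly asserts only additivity of $\eta\circ\pi_{\mathfrak{p}}$. This last point I would resolve by recalling that eval factors as $\eta\circ h\circ\pi_{S^{-1}_{\mathfrak{a}}\mathfrak{a}}$, in which $h\circ\pi$ is a genuine ring homomorphism into the ring $\mathbb{Q}[x]/\mathfrak{a}$ and $\eta$ merely selects the canonical least-degree representative; multiplicativity modulo $a(x)$ is therefore inherited directly from the quotient ring structure.
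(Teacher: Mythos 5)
Your proposal is correct and follows exactly the route the paper intends: the paper offers no written proof of Lemma \ref{lem_res} (it only remarks that the properties "can be proved easily from the definitions"), and your argument is precisely that direct verification, unwinding $\res{r(x)/s(x)}{a(x)}=\rem{(\alpha(x)r(x))}{a(x)}$ and using modular arithmetic on Bézout inverses. Your two preliminary observations (uniqueness of the inverse modulo $\mathfrak{a}$, hence well-definedness, and stability of the coprimality hypothesis under subtracting multiples of $a(x)$) and your explicit justification of multiplicativity of the remainder via the quotient-ring structure — a point the paper's Lemma \ref{rem_lem}, which asserts only additivity, leaves implicit — supply exactly the details the paper omits.
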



\subsection{Partial Fraction Decomposition}

Given $p_{1}(x),\cdots,p_{n}(x)\in \mathbb{Q}[x]$ non-trivial polynomials that are pairwise relatively prime i.e., $\textnormal{gcd}(p_{i}(x),p_{j}(x))=1$ for $i\neq j$. Algebraically speaking, a partial fraction decomposition of the rational function of the form 
\begin{equation}\label{pf_basic}
\frac{1}{p_{1}(x)\cdots p_{n}(x)}=\frac{a_{1}(x)}{p_{1}(x)}+\cdots+\frac{a_{n}(x)}{p_{n}(x)}
\end{equation}
can be viewed as the B\'{e}zout's identity 
$$
a_{1}(x)g_{1}(x)+\cdots+a_{n}(x)g_{n}(x)=1,
$$
where 
\begin{equation}\label{gj}
g_{j}(x)=\prod_{i=1,  i\neq j}^{n}p_{i}(x),\textnormal{ for each } 1\leq j\leq n
\end{equation}
which are coprime, $\textnormal{gcd}(g_{1}(x),\cdots, g_{n}(x))=1$. 

\begin{lem} \label{lem_rem}
Suppose the equation, for $g_{j}(x)$ in (\ref{gj}),  
\begin{equation}\label{lem_bez1}
b_{1}(x)g_{1}(x)+\cdots+b_{n}(x)g_{n}(x)=f(x)
\end{equation}
holds for $f(x)\in \mathbb{Q}[x]$  and $\textnormal{deg }(f)<\textnormal{deg }(p_{1}\cdots p_{n})$ . Then, the following equation also holds 
\begin{equation}\label{bez_0}
(\rem{b_{1}(x)}{p_{1}(x)})g_{1}(x)+\cdots+(\rem{b_{n}(x)}{p_{n}(x)})g_{n}(x)=f(x).
\end{equation}
\end{lem}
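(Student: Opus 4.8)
The plan is to collapse the two displayed equations into a single polynomial identity and then finish with an elementary degree count. First I would introduce the full product $P(x)=p_{1}(x)\cdots p_{n}(x)$ and record the one algebraic fact that drives everything: since $g_{j}(x)$ in (\ref{gj}) is the product of all the $p_{i}$ except $p_{j}$, we have $p_{j}(x)g_{j}(x)=P(x)$ for every $j$. Note also that $\deg(g_{j})=\deg(P)-\deg(p_{j})$, a fact I will need shortly.

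Next, for each $j$ I would divide $b_{j}$ by $p_{j}$, writing $b_{j}(x)=q_{j}(x)p_{j}(x)+r_{j}(x)$ where $r_{j}(x)=\rem{b_{j}(x)}{p_{j}(x)}$ and $\deg(r_{j})<\deg(p_{j})$. Substituting this into the hypothesis (\ref{lem_bez1}) and using $p_{j}g_{j}=P$ to pull the quotient terms together, I obtain
$$
\sum_{j=1}^{n}r_{j}(x)g_{j}(x)=f(x)-P(x)Q(x),\qquad Q(x):=\sum_{j=1}^{n}q_{j}(x).
$$
With this in hand, the desired conclusion (\ref{bez_0}) is precisely the claim that the correction term $P(x)Q(x)$ vanishes, i.e. that $Q(x)=0$.

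The crux, and really the only substantive step, is the degree argument that forces $Q=0$. On one side, each summand on the left satisfies $\deg(r_{j}g_{j})<\deg(p_{j})+\deg(g_{j})=\deg(P)$, so the whole left-hand side has degree strictly below $\deg(P)$. On the other side, the hypothesis supplies $\deg(f)<\deg(P)$. If $Q\neq 0$ then $\deg(PQ)=\deg(P)+\deg(Q)\geq\deg(P)$, and since $\deg(f)<\deg(P)\leq\deg(PQ)$ there is no cancellation at the top degree, so $\deg(f-PQ)\geq\deg(P)$ — contradicting that the left-hand side has degree below $\deg(P)$. Hence $Q(x)=0$, and the displayed identity reduces exactly to (\ref{bez_0}). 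I expect the only point demanding care is the degree bookkeeping — in particular justifying that no leading-term cancellation can rescue a nonzero $Q$ — but the strict inequality $\deg(f)<\deg(P)$ handed to us in the statement makes this clean, so I anticipate no real obstacle beyond that.
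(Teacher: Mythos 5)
Your proposal is correct and takes essentially the same route as the paper's proof: both rest on the division $b_{j}(x)=q_{j}(x)p_{j}(x)+r_{j}(x)$ and both reduce the lemma to showing that the quotients $q_{j}$ sum to zero. The only difference is presentational: the paper divides the identity through by $P(x)=p_{1}(x)\cdots p_{n}(x)$ and argues that a difference of proper rational functions cannot equal a nonzero polynomial, whereas you stay entirely in $\mathbb{Q}[x]$ and run the equivalent degree count on $f(x)-P(x)Q(x)$ directly, which is marginally more self-contained.
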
 
\begin{proof}[Proof.] To prove (\ref{bez_0}) it suffices to show that the quotients of $b_{i}(x)$ when divided by $p_{i}(x)$ add up to zero. Suppose we have
$
b_{i}(x)=p_{i}(x)q_{i}(x)+r_{i}(x)
$
where $\textnormal{deg}(r_{i})<\textnormal{deg}(p_{i})$, for $i=1,\cdots,n$. Dividing both sides of the equation (\ref{lem_bez1}) by $p_{1}(x)\cdots p_{n}(x)$ we get 
$$
\frac{f(x)}{p_{1}(x)\cdots p_{n}(x)}=\frac{b_{1}(x)}{p_{1}(x)}+\cdots+\frac{b_{n}(x)}{p_{n}(x)}=\left(q_{1}(x)+\frac{r_{1}(x)}{p_{1}(x)}\right)\cdots+\left(q_{n}(x)+\frac{r_{n}(x)}{p_{n}(x)}\right).
$$
After rearranging the terms we have a difference of two proper rational polynomials equal to a polynomial  
$$
\left(\frac{f(x)}{p_{1}(x)\cdots p_{n}(x)}\right)-\left(\frac{r_{1}(x)}{p_{1}(x)}+\cdots+\frac{r_{n}(x)}{p_{n}(x)}\right)=q_{1}(x)+\cdots+q_{n}(x),
$$
which is possible only if $q_{1}(x)+\cdots+q_{n}(x)=0$. Hence the result is proved.   
\end{proof}

The advantage of having the above lemma is in simplifying the partial fraction expansion for complicated numerators. We now prove our fundamental lemma which says that eval operator converts certain equations to B\'{e}zout's identities. 

\begin{lem}\label{lem_bez2}[Fundamental Lemma]
Suppose $g_{j}(x)$ satisfy (\ref{gj}) and 
\begin{equation}\label{linear_eqn}
a_{1}(x)g_{1}(x)+\cdots+a_{n}(x)g_{n}(x)=f(x)
\end{equation}
holds for $\textnormal{deg }(f)<\textnormal{deg }(p_{1}\cdots p_{n})$ and $\textnormal{gcd}(f(x),p_{i}(x))=1$ for all $i=1,2,\cdots,n$. Then, the following equation also holds:
\begin{equation}\label{bez_1}
\res{\frac{a_{1}(x)}{f(x)}}{p_{1}(x)}g_{1}(x)+\cdots+\res{\frac{a_{n}(x)}{f(x)}}{p_{n}(x)}g_{n}(x)=1.
\end{equation}
\end{lem}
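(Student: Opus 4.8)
The plan is to verify the claimed B\'ezout identity one modulus at a time and then upgrade the resulting local congruences to an exact polynomial equality by combining the Chinese Remainder Theorem with a degree count. Throughout, write $L(x)=\sum_{i=1}^{n}\res{\frac{a_{i}(x)}{f(x)}}{p_{i}(x)}\,g_{i}(x)$ for the left-hand side of (\ref{bez_1}); the goal is to show $L(x)=1$.

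First I would reduce the hypothesis (\ref{linear_eqn}) modulo a fixed $p_{j}(x)$. Since $p_{j}(x)$ divides $g_{i}(x)$ for every $i\neq j$ by (\ref{gj}), all summands except the $j$-th vanish, leaving $a_{j}(x)g_{j}(x)\equiv f(x)\pmod{p_{j}(x)}$. Because $\textnormal{gcd}(f(x),p_{j}(x))=1$, B\'ezout's identity furnishes a polynomial $\phi_{j}(x)$ with $\phi_{j}(x)f(x)\equiv 1\pmod{p_{j}(x)}$. By Definition \ref{eval} together with Lemma \ref{rem_lem}, the $j$-th eval coefficient is $\res{\frac{a_{j}(x)}{f(x)}}{p_{j}(x)}=\rem{\phi_{j}(x)a_{j}(x)}{p_{j}(x)}$, which is congruent to $\phi_{j}(x)a_{j}(x)\pmod{p_{j}(x)}$. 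Multiplying the local relation $a_{j}(x)g_{j}(x)\equiv f(x)$ through by $\phi_{j}(x)$ then yields $\res{\frac{a_{j}(x)}{f(x)}}{p_{j}(x)}\,g_{j}(x)\equiv \phi_{j}(x)f(x)\equiv 1\pmod{p_{j}(x)}$.

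Next I would assemble these local identities. Reducing $L(x)$ itself modulo $p_{j}(x)$ again annihilates every term with $i\neq j$ (as $p_{j}\mid g_{i}$), so the previous step gives $L(x)\equiv 1\pmod{p_{j}(x)}$ for each $j=1,\dots,n$. Thus $p_{j}(x)\mid\bigl(L(x)-1\bigr)$ for all $j$, and since the $p_{j}(x)$ are pairwise relatively prime, the Chinese Remainder Theorem forces $p_{1}(x)\cdots p_{n}(x)\mid\bigl(L(x)-1\bigr)$.

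Finally, the clinching step is a degree bound. Each eval coefficient is a remainder modulo $p_{j}(x)$, hence has degree strictly less than $\deg p_{j}$, so the $j$-th term has degree below $\deg p_{j}+\deg g_{j}=\deg(p_{1}\cdots p_{n})$; consequently $\deg L<\deg(p_{1}\cdots p_{n})$ and therefore $\deg(L-1)<\deg(p_{1}\cdots p_{n})$, using that the non-trivial factors make $\deg(p_{1}\cdots p_{n})\geq 1$. A nonzero multiple of $p_{1}\cdots p_{n}$ cannot have degree below $\deg(p_{1}\cdots p_{n})$, so $L(x)-1=0$, which is precisely (\ref{bez_1}). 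I expect the only delicate point to be the bookkeeping with the eval operator---namely justifying the congruence $\res{\frac{a_{j}(x)}{f(x)}}{p_{j}(x)}\equiv \phi_{j}(x)a_{j}(x)\pmod{p_{j}(x)}$ directly from Definition \ref{eval}---while the CRT-plus-degree endgame is routine once the local congruences are secured.
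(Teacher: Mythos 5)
Your proof is correct, but it takes a genuinely different route from the paper's. You argue local-to-global: reducing (\ref{linear_eqn}) modulo each $p_{j}(x)$ kills every term with $i\neq j$, giving $a_{j}(x)g_{j}(x)\equiv f(x) \pmod{p_{j}(x)}$, hence $\res{\frac{a_{j}(x)}{f(x)}}{p_{j}(x)}\,g_{j}(x)\equiv 1 \pmod{p_{j}(x)}$; pairwise coprimality then yields $p_{1}(x)\cdots p_{n}(x)\mid (L(x)-1)$, and the observation that each eval coefficient is a remainder of degree less than $\deg p_{j}$ bounds $\deg L$ below $\deg(p_{1}\cdots p_{n})$, forcing $L=1$. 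The paper instead stays at the level of rational-function identities: it divides (\ref{linear_eqn}) by $f(x)p_{1}(x)\cdots p_{n}(x)$, inserts the B\'ezout expression $\alpha_{i}(x)p_{i}(x)+\beta_{i}(x)f(x)=1$ into each numerator, and rearranges so that one side is a fraction with denominator $f(x)$ while the other side's denominator is coprime to $f(x)$, concluding that the polynomial combination $a_{1}\alpha_{1}+\cdots+a_{n}\alpha_{n}$ vanishes, after which the coefficients $a_{i}\beta_{i}$ are replaced by their remainders in the spirit of Lemma \ref{lem_rem}. Your version buys two things: first, it makes visible that the hypothesis $\textnormal{deg}(f)<\textnormal{deg}(p_{1}\cdots p_{n})$ is never used---only $\textnormal{gcd}(f,p_{i})=1$ matters---so you have in fact proved a slightly stronger statement; second, your CRT-plus-degree endgame is airtight, whereas the paper's ``denominators coprime'' step, read literally, only gives that $f(x)$ divides $a_{1}\alpha_{1}+\cdots+a_{n}\alpha_{n}$ (equivalently, that $p_{1}\cdots p_{n}$ divides $1-\sum_{i}a_{i}\beta_{i}g_{i}$), and finishing from there requires exactly the kind of degree count you supply. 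What the paper's route buys in exchange is thematic coherence: it exhibits the lemma as an instance of its guiding principle that partial fraction decompositions are B\'ezout identities, reusing the quotient-cancellation mechanism of Lemma \ref{lem_rem}.
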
 
\begin{proof}[Proof.]
As $\textnormal{gcd}(f(x),p_{i}(x))=1$ for $1\leq i \leq n$ there exist $\alpha_{i}(x),\beta_{i}(x)\in \mathbb{Q}[x]$ such that
$$
\alpha_{i}(x)p_{i}(x)+\beta_{i}(x)f(x)=1.
$$ 
By the definition of eval function it suffices to prove the equation (\ref{bez_1}) with the factors $\res{\frac{a_{i}(x)}{f(x)}}{p_{i}(x)}$ replaced with $\rem{a_{i}(x)\beta_{i}(x)}{p_{i}(x)}$. Dividing both sides of the equation (\ref{linear_eqn}) by $f(x)$ and $\prod_{i=1}^{n}p_{i}(x)$ we have 
$$
\frac{1}{p_{1}(x)\cdots p_{n}(x)}=\frac{a_{1}(x)}{f(x)p_{1}(x)}+\cdots+\frac{a_{n}(x)}{f(x)p_{n}(x)}.
$$  
By multiplying in the numerator with the factor $(\alpha_{i}(x)p_{i}(x)+\beta_{i}(x)f(x))$ (which essentially equals $1$) in the $i^{th}$ term we have
$$
\frac{1}{p_{1}(x)\cdots p_{n}(x)}=\sum^{n}_{i=1}\frac{a_{i}(x)(\alpha_{i}(x)p_{i}(x)+\beta_{i}(x)f(x))}{f(x)p_{i}(x)}.
$$ 
Upon simplifying and rearranging terms we have
$$
\frac{a_{1}(x)\alpha_{1}(x)+\cdots+a_{n}(x)\alpha_{n}(x)}{f(x)}=\frac{1}{p_{1}(x)\cdots p_{n}(x)}-\sum_{i=1}^{n}\frac{a_{i}(x)\beta_{i}(x)}{p_{i}(x)},
$$
i.e. left hand side has a denominator $f(x)$ and the denominator of the right hand side is coprime to $f(x)$. This is possible only if $a_{1}(x)\alpha_{1}(x)+\cdots+a_{n}(x)\alpha_{n}(x)=0$. Therefore, the right hand side of the above equation is zero. Hence the lemma is established.    
\end{proof}

We are now ready to prove the extended cover-up method.   

\begin{proof}[Proof of Theorem \ref{cover_up}.]
By Lemma \ref{lem_rem}, without loss of generality, we can assume $f(x)=1$. The polynomials $g_{j}(x)=\prod^{n}_{i=1,i\neq j}p_{i}(x)$ for $j=1,\cdots,n$ are mutually coprime. By B\'{e}zout's identity there exist $a_{j}(x)\in \mathbb{Q}[x]$ for $1\leq j\leq n$ such that 
$
a_{1}(x)g_{1}(x)+\cdots+a_{n}(x)g_{n}(x)=1.
$
It can be easily seen that none of the polynomials $a_{j}(x)$ are zero. So, we consider the identity 
$$
\left(\frac{a_{1}(x)}{a_{1}(x)g_{1}(x)+\cdots+a_{n}(x)g_{n}(x)}\right)g_{1}(x)+\cdots+\left(\frac{a_{n}(x)}{a_{1}(x)g_{1}(x)+\cdots+a_{n}(x)g_{n}(x)}\right)g_{n}(x)=1.
$$
Then by Lemma \ref{lem_bez2} we can write the above equation as  
$$
\sum^{n}_{i=1}\res{\frac{a_{i}(x)}{a_{1}(x)g_{1}(x)+\cdots+a_{n}(x)g_{n}(x)}}{p_{i}(x)}g_{i}(x)=1.
$$ 
By substitution rule Lemma \ref{lem_res}(3) we have 
$$
\sum^{n}_{i=1}\res{\frac{a_{i}(x)\mod p_{i}(x)}{a_{1}(x)g_{1}(x)+\cdots+a_{n}(x)g_{n}(x)\mod p_{i}(x)}}{p_{i}(x)}g_{i}(x)=1.
$$ 
As $g_{j}(x)\mod p_{i}(x)=0$ if $i\neq j$ and $a_{j}(x)$ non-zero for all $j=1,\cdots,n$
$$
\res{\frac{1}{g_{1}(x)}}{p_{1}(x)}g_{1}(x)+\cdots+\res{\frac{1}{g_{n}(x)}}{p_{n}(x)}g_{n}(x)=1
$$
Hence the result is proved.  
\end{proof}

Our method for partial fraction decomposition can be considered as an alternative to the undetermined coefficient method, usually taught at an early stage of higher mathematics. To illustrate the efficacy of our method we provide an example.

\begin{exmp}
Using the extended cover-up method we determine the partial fraction of $\frac{1}{(x^2+a^2)(x^3+b^3)}$, for arbitrary but fixed $a,b>0$. Let the partial fraction be given by
$$
\frac{1}{(x^2+a^2)(x^3+b^3)} = \frac{a_{1}(x)}{(x^2+a^2)}+\frac{a_{2}(x)}{(x^3+b^3)}
$$
we now evaluate for $a_{1}(x)$ and $a_{2}(x)$. 
\begin{eqnarray}
\nonumber a_{1}(x)&=&\res{\frac{1}{x^{3}+b^{3}}}{x^{2}+a^{2}} = \res{\frac{1}{x^{3}+b^{3}}\times \frac{x^{3}-b^{3}}{x^{3}-b^{3}}}{x^{2}+a^{2}}\\
\nonumber &=& \res{\frac{x^{3}-b^{3}}{x^{6}-b^{6}}}{x^{2}+a^{2}}= \frac{a^{2}x+b^{3}}{a^{6}+b^{6}},
\end{eqnarray}
where we used the substitution rule given in Lemma \ref{lem_res}(3). Similarly, we have  
\begin{eqnarray}
\nonumber a_{2}(x)&=&\res{\frac{1}{x^{2}+a^{2}}\times \frac{x^{4}-a^{2}x^{2}+a^{4}}{x^{4}-a^{2}x^{2}+a^{4}}}{x^{3}+b^{3}}= \frac{-a^{2}x^{2}-b^{3}x-a^{4}}{b^{6}+a^{6}}
\end{eqnarray}
Hence, the partial fraction is 
$$
\frac{1}{(x^2+a^2)(x^3+b^3)}=\frac{1}{a^{6}+b^{6}}\left(\frac{a^{2}x+b^{3}}{x^{2}+a^{2}}+\frac{-a^{2}x^{2}-b^{3}x-a^{4}}{x^{3}+b^{3}}\right)
$$
One can factorize $(x^{3}+b^{3})$ and follow the above method to perform further decomposition. Note that by the undetermined coefficient method this decomposition would have required solving simultaneous linear equations with five unknowns.  
\end{exmp}

\subsection{Partial Fraction for Powers}
Given a partial fraction decomposition corresponding to 
\begin{equation}\label{basic_pf}
a_{1}(x)p_{1}(x)+a_{2}(x)p_{2}(x)=1,
\end{equation}
one can obtain a direct formula for partial fractions by the extended cover-up of $1/(p_{1}(x)^{k_{1}}p_{2}(x)^{k_{2}}).$ 

Raising both sides of (\ref{basic_pf}) by $n$ and rearranging the terms we get
\begin{equation}\label{n_basic_pf}
a_{1}(x)^{n}p_{1}(x)^{n}+a_{2}(x)^{n}p_{2}(x)^{n}=1-p_{1}(x)q(x),
\end{equation}
where $q(x)=\sum_{k=1}^{n-1}\begin{pmatrix}n \\ k\end{pmatrix}a_{1}(x)^{k}a_{2}(x)^{n-k}p_{2}(x)^{n-k}p_{1}(x)^{k-1}$. Further, dividing both sides of (\ref{n_basic_pf}) by $1-p_{1}(x)q(x)$ we get 
$$
\left(\frac{a_{1}(x)^{n}}{1-p_{1}(x)q(x)}\right)p_{1}(x)^{n}+\left(\frac{a_{2}(x)^{n}}{1-p_{1}(x)q(x)}\right)p_{2}(x)^{n}=1.
$$
Thus, in view of the extended cover-up method and the substitution rule Lemma \ref{lem_res}(3), we have 
\begin{eqnarray}
\nonumber \textnormal{eval}\left(\frac{1}{p_{2}(x)^{n}}; p_{1}(x)^{n}\right)&=&\textnormal{eval}\left(\frac{a_{2}(x)^{n}}{1-p_{1}(x)q(x)};p_{1}(x)^{n}\right)\\
\nonumber &=& \textnormal{eval}\left(a_{2}(x)^{n}\sum_{k=0}^{n-1}p_{1}(x)^{k}q(x)^{k}; p_{1}(x)^{n}\right)\\
\nonumber &=& \left(a_{2}(x)^{n}\sum_{k=0}^{n-1}p_{1}(x)^{k}q(x)^{k}\right) \textnormal{ rem } p_{1}(x)^{n}.
\end{eqnarray}

\begin{thm}\label{pow_k_eval}
Suppose $a_{1}(x)p_{1}(x)+a_{2}(x)p_{2}(x)=1$. Then for a given positive integer $n$ we have 
$$
\textnormal{eval}\left(\frac{1}{p_{2}(x)}; p_{1}(x)^{n}\right)=\left(p_{2}(x)^{n-1}a_{2}(x)^{n}\sum_{k=0}^{n-1}p_{1}(x)^{k}q(x)^{k}\right) \textnormal{ rem } p_{1}(x)^{n},
$$
where 
$
q(x)=\sum_{k=1}^{n-1}\begin{pmatrix}n \\ k\end{pmatrix}a_{1}(x)^{k}a_{2}(x)^{n-k}p_{2}(x)^{n-k}p_{1}(x)^{k-1}.
$ 
\end{thm}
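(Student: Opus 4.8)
The plan is to build on the identity established in the running text immediately before the statement, namely
\[
\textnormal{eval}\left(\frac{1}{p_{2}(x)^{n}};\, p_{1}(x)^{n}\right)=\left(a_{2}(x)^{n}\sum_{k=0}^{n-1}p_{1}(x)^{k}q(x)^{k}\right)\textnormal{ rem } p_{1}(x)^{n},
\]
and to transfer it from the power $p_{2}^{n}$ down to the single factor $p_{2}$ by one multiplicativity step. The essential observation is that $\frac{1}{p_{2}(x)}=\frac{p_{2}(x)^{n-1}}{p_{2}(x)^{n}}$, so $\textnormal{eval}(1/p_{2};p_{1}^{n})$ is the eval of a product whose numerator carries the polynomial factor $p_{2}^{n-1}$ and whose remaining fraction $1/p_{2}^{n}$ is already understood. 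All of these eval operations are legitimate because $a_{1}p_{1}+a_{2}p_{2}=1$ forces $\gcd(p_{1},p_{2})=1$, hence $\gcd(p_{2},p_{1}^{n})=1$.

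First I would invoke the multiplicativity property Lemma \ref{lem_res}(2) with $r_{0}=p_{2}^{n-1}$, $s_{0}=1$, $r_{1}=1$, $s_{1}=p_{2}^{n}$, obtaining
\[
\textnormal{eval}\left(\frac{1}{p_{2}};\,p_{1}^{n}\right)=\left\{\textnormal{eval}(p_{2}^{n-1};p_{1}^{n})\cdot\textnormal{eval}\left(\frac{1}{p_{2}^{n}};p_{1}^{n}\right)\right\}\textnormal{ rem } p_{1}^{n}.
\]
By Lemma \ref{lem_res}(1) the first factor is $p_{2}^{n-1}\textnormal{ rem } p_{1}^{n}$, and into the second I substitute the pre-statement identity above. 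It then remains to collapse the nested remainders: since reduction modulo $p_{1}^{n}$ is the ring homomorphism underlying Lemma \ref{rem_lem}, the remainder of a product equals the remainder of the product of the remainders, so the inner $\textnormal{rem } p_{1}^{n}$ may be absorbed. This produces exactly the claimed closed form
\[
\textnormal{eval}\left(\frac{1}{p_{2}};\,p_{1}^{n}\right)=\left(p_{2}^{n-1}a_{2}^{n}\sum_{k=0}^{n-1}p_{1}^{k}q^{k}\right)\textnormal{ rem } p_{1}^{n}.
\]

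I expect no substantive obstacle, because the heavy lifting already sits in the pre-statement computation: the binomial expansion of $(a_{1}p_{1}+a_{2}p_{2})^{n}=1$ that isolates $a_{1}^{n}p_{1}^{n}+a_{2}^{n}p_{2}^{n}=1-p_{1}q$, followed by the Neumann-series inversion of $1-p_{1}q$ modulo $p_{1}^{n}$ (valid because $(p_{1}q)^{n}\equiv 0$, so that $\sum_{k=0}^{n-1}(p_{1}q)^{k}$ inverts $1-p_{1}q$) together with the extended cover-up method. The only point deserving care in the present step is the legitimacy of the rem-collapse, which rests on $\pi_{\mathfrak{p}}$ being a ring homomorphism onto the quotient; once that is granted, the passage from $p_{2}^{n}$ to $p_{2}$ is a single multiplicative manipulation.
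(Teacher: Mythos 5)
Your proposal is correct and follows essentially the same route as the paper: the paper's proof \emph{is} the pre-statement computation (binomial expansion of $(a_{1}p_{1}+a_{2}p_{2})^{n}=1$, Neumann-series inversion of $1-p_{1}q$ modulo $p_{1}(x)^{n}$, and the substitution rule), with the passage from $1/p_{2}^{n}$ to $1/p_{2}$ via the factor $p_{2}^{n-1}$ left implicit. Your explicit handling of that last step --- writing $1/p_{2}=p_{2}^{n-1}/p_{2}^{n}$, invoking Lemma \ref{lem_res}(1)--(2), and collapsing the nested remainders via the homomorphism property --- is exactly the intended justification, so there is nothing substantive to add.
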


Using Theorem \ref{pow_k_eval}, Lemma \ref{lem_res} and the extended cover-up method we can derive a partial fraction formula for the higher power case.

\section{The polynomial $\Psi_{m}(x)$ }\label{sec_psi}

The polynomial $\Psi_{m}(x)$ plays a key role in the $q$-partial fractions owing to $$1-x^{m}=(1-x)\Psi_{m}(x).$$ Moreover, we observe that $\Psi_{m}(x)$ shares a pleasant relationship with the eval operator and also the special classes of numbers, the so-called degenerate Bernoulli numbers.  

The (usual) Bernoulli number $B_{k}$ may be defined by the generating function 
$$
\frac{t}{e^{t}-1}=\sum^{\infty}_{k=0}B_{k}\frac{t^{k}}{k!}.
$$
Since $(1+\lambda t)^{1/\lambda}\rightarrow e^{t}$ as $\lambda \rightarrow 0$, Carlitz \cite{degenerate_bernoulli} defined the degenerate Bernoulli number as 
\begin{equation}\label{carlitz_eqn}
\frac{t}{(1+\lambda t)^{1/\lambda}-1}=\sum^{\infty}_{k=0}\beta_{k}(\lambda)\frac{t^{k}}{k!}.
\end{equation}
By setting $\lambda=1/m$ and $x=1+\lambda t$ in (\ref{carlitz_eqn}) we obtain 
\begin{equation}\label{young_eqn}
\frac{m(1-x)}{1-x^{m}}=\frac{m}{\Psi_{m}(x)}=\sum_{k=0}^{\infty}(-1)^{k}\frac{\tilde{\beta}_{k}(m)}{k!}(1-x)^{k},
\end{equation}
where $\tilde{\beta}_{k}(m)=m^{k}\beta_{k}(1/m)$ where $\beta_{k}(m)$ is the degenerate Bernoulli number. Thus $\tilde{\beta}_{k}(m)$ is the reciprocal polynomial of $\beta_{k}(m)$. See \cite{young} for some properties of $\tilde{\beta}_{k}(m)$.

The reciprocal degenerate Bernoulli of order $r$ is given by \cite{young,Zhang}  
\begin{equation}\label{r_RDB}
\left(\frac{m}{\Psi_{m}(x)}\right)^{r}=\sum_{k=0}^{\infty}(-1)^{k}\frac{\tilde{\beta}^{(r)}_{k}(m)}{k!}(1-x)^{k}.
\end{equation}
Raising both sides of (\ref{young_eqn}) by a power $r$ and expanding we have 
$$
\tilde{\beta}_{k}^{(r)}(m)=\sum_{k_{1}+\cdots+k_{r}=k}\frac{k!}{k_{1}!\cdots k_{r}!}\tilde{\beta}_{k_{1}}(m)\cdots \tilde{\beta}_{k_{r}}(m).
$$
Obviously, $\tilde{\beta}^{(1)}_{k}(m)=\tilde{\beta}_{k}(m)$. A sum of products formula for $\tilde{\beta}_{k}(m)$ with a fixed $m$ is given in \cite{Zhang}.

\subsection{Substitution Rules of $\Psi_{m}(x)$}
We will now explore the relation between $\Psi_{m}(x)$ and the eval function. 
\begin{lem}[Substitution Rule 1]\label{cor_phi}
\begin{equation}\label{rem_psi}
\textnormal{eval}(x^{j};\Psi_{m}(x))=x^{j} \textnormal{ rem } \Psi_{m}(x)= \left\{\begin{array}{@{}l@{\thinspace}l}
       x^{j\%m} & \textnormal{   if   } j\%m \neq m-1 \\
       -\sum^{m-2}_{i=0}x^{i}  & \textnormal{   if   } j\%m=m-1. \\
     \end{array}\right.
\end{equation}
where $\%$ operator is the remainder in the ring of integers. 
\end{lem}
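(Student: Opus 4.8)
The plan is to reduce the computation of $\textnormal{eval}(x^{j};\Psi_{m}(x))$ to an elementary polynomial remainder and then split into two cases according to the residue $j\%m$. Since the first argument is already a polynomial, Lemma \ref{lem_res}(1) gives $\textnormal{eval}(x^{j};\Psi_{m}(x))=\textnormal{rem}(x^{j};\Psi_{m}(x))$, so it suffices to identify the remainder of $x^{j}$ modulo $\Psi_{m}(x)$. This already disposes of the equality between the first two expressions in the statement, leaving only the explicit evaluation of the remainder.

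The structural fact I would exploit is the factorization $1-x^{m}=(1-x)\Psi_{m}(x)$, equivalently $x^{m}-1=(x-1)\Psi_{m}(x)$, which shows that $\Psi_{m}(x)\mid(x^{m}-1)$ and hence $x^{m}\equiv 1 \pmod{\Psi_{m}(x)}$. Writing $j=qm+(j\%m)$ and using $x^{j}=(x^{m})^{q}x^{j\%m}$, the substitution rule (Lemma \ref{lem_res}(3), or directly the Generalized Remainder Theorem applied to the factor $x^{m}-1$) lets me replace each occurrence of $x^{m}$ by $1$, yielding $x^{j}\equiv x^{j\%m} \pmod{\Psi_{m}(x)}$. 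Thus the problem reduces to computing $\textnormal{rem}(x^{i};\Psi_{m}(x))$ for $i=j\%m$ with $0\leq i\leq m-1$; this is precisely the step illustrated in the worked example with $\Psi_{3}(x)$ earlier in the excerpt.

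For the two cases I proceed as follows. If $i\neq m-1$, i.e. $i\leq m-2$, then $\deg(x^{i})=i<m-1=\deg\Psi_{m}$, so $x^{i}$ is already reduced and the remainder is $x^{i}=x^{j\%m}$, giving the first branch. If $i=m-1$, then $x^{m-1}$ has the same degree as $\Psi_{m}$; since $\Psi_{m}(x)=x^{m-1}+x^{m-2}+\cdots+x+1$, a single subtraction gives $x^{m-1}=\Psi_{m}(x)-\sum_{i=0}^{m-2}x^{i}$, so the remainder is $-\sum_{i=0}^{m-2}x^{i}$, a polynomial of degree $m-2<m-1$; this is therefore the genuine remainder and yields the second branch.

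There is no serious obstacle here, as the computation is elementary. The only points requiring care are, first, that the exponent reduction $x^{m}\equiv 1$ must be applied \emph{before} any degree reduction, so that the two branches are correctly distinguished by the value of $j\%m$ rather than by $j$ itself; and second, that in the degree-$(m-1)$ case one must confirm that subtracting $\Psi_{m}$ exactly once already produces a polynomial of degree strictly less than $m-1$, so that no further reduction step is needed.
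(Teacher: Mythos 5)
Your proposal is correct and follows essentially the same route as the paper: the paper's proof writes $j=md+r$ and expands $x^{md+r}=((x^{m}-1)+1)^{d}x^{r}=q(x)(x-1)\Psi_{m}(x)+x^{r}$, which is exactly your congruence $x^{m}\equiv 1 \pmod{\Psi_{m}(x)}$ made explicit via the binomial theorem. Your write-up is in fact slightly more complete, since you spell out the final reduction $x^{m-1}=\Psi_{m}(x)-\sum_{i=0}^{m-2}x^{i}$ for the case $j\%m=m-1$, which the paper leaves implicit.
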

\begin{proof}[Proof.]
Suppose $j=md+r$, for $0\leq r<m$. Then, the result follows from 
$
x^{md+r}=((x^{m}-1)+1)^{d}x^{r}=q(x)(x^{m}-1)+x^{r}=q(x)(x-1)\Psi_{m}(x)+x^{r}.
$ 
\end{proof}

The $\textnormal{eval}(\cdot,\Psi_{m}(x)^{r})$ function behaves in a satisfying  manner under the action of Fourier Transform. This can be seen by another substitution rule. 

\begin{lem}[Substitution Rule 2]\label{eval_eval}
Let $f(x),g(x)\in \mathbb{Q}[x]$ and $\textnormal{gcd}(g(x),\Psi_{b}(x))=1$. Suppose
$$
h(x)=\res{\frac{f(x)}{g(x)}}{\Psi_{b}(x)^{r}} \textnormal{  for some positive integers } r \textnormal{ and } b. 
$$
Then, we have $h(\xi_{b})=f(\xi_{b})/g(\xi_{b})$ for $\xi_{b}= e^{2\pi i/b}$.
\end{lem}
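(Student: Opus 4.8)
The plan is to trace the eval operator back to its definition and then exploit the fact that $\Psi_{b}(\xi_{b})=0$, since $\xi_{b}=e^{2\pi i/b}$ is a primitive $b$-th root of unity and hence a root of $\Psi_{b}(x)=(1-x^{b})/(1-x)$. First I would unwind the definition: by Definition~\ref{eval}, writing $a(x)=\Psi_{b}(x)^{r}$, we have $h(x)=\res{\frac{f(x)}{g(x)}}{\Psi_{b}(x)^{r}}=\rem{(\alpha(x)f(x))}{\Psi_{b}(x)^{r}}$, where $\alpha(x)$ is the inverse of $g(x)$ modulo $\Psi_{b}(x)^{r}$, that is $\alpha(x)g(x)\equiv 1\pmod{\Psi_{b}(x)^{r}}$. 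The hypothesis $\textnormal{gcd}(g(x),\Psi_{b}(x))=1$ is exactly what guarantees this inverse exists (the two polynomials remain coprime after raising $\Psi_{b}$ to the power $r$).

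The core of the argument is that evaluation at $x=\xi_{b}$ kills every multiple of $\Psi_{b}(x)$, and a fortiori every multiple of $\Psi_{b}(x)^{r}$. Concretely, from $h(x)=\alpha(x)f(x)-\Psi_{b}(x)^{r}q(x)$ for some quotient $q(x)\in\mathbb{Q}[x]$ coming from the remainder operation, I would substitute $x=\xi_{b}$. Since $\Psi_{b}(\xi_{b})=0$, the term $\Psi_{b}(\xi_{b})^{r}q(\xi_{b})$ vanishes, giving $h(\xi_{b})=\alpha(\xi_{b})f(\xi_{b})$. Separately, the congruence $\alpha(x)g(x)=1+\Psi_{b}(x)^{r}\beta(x)$ (B\'ezout form) evaluated at $x=\xi_{b}$ yields $\alpha(\xi_{b})g(\xi_{b})=1$, hence $\alpha(\xi_{b})=1/g(\xi_{b})$; here I use that $g(\xi_{b})\neq 0$, which again follows from $\textnormal{gcd}(g,\Psi_{b})=1$ so that $g$ does not vanish at the root $\xi_{b}$. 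Combining the two gives $h(\xi_{b})=f(\xi_{b})/g(\xi_{b})$, as claimed.

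I do not anticipate a serious obstacle; the lemma is essentially a clean bookkeeping consequence of the definition together with the vanishing $\Psi_{b}(\xi_{b})=0$. The only point requiring care is the justification that the inverse $\alpha(x)$ exists modulo $\Psi_{b}(x)^{r}$ and that $g(\xi_{b})\neq 0$: both reduce to the coprimality hypothesis, since a common root $\xi_{b}$ of $g$ and $\Psi_{b}$ would violate $\textnormal{gcd}(g(x),\Psi_{b}(x))=1$ (as $\xi_{b}$ is a root of the irreducible factor of $\Psi_{b}$ it sits in). With that noted, the substitution of $\xi_{b}$ into the identity $h(x)g(x)\equiv f(x)\pmod{\Psi_{b}(x)^{r}}$ and the annihilation of the modulus term complete the proof.
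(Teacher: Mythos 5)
Your proposal is correct and follows essentially the same route as the paper's proof: write the Bézout identity $\alpha(x)g(x)+\beta(x)\Psi_{b}(x)^{r}=1$, express $h(x)=\alpha(x)f(x)-q(x)\Psi_{b}(x)^{r}$ via the remainder, and substitute $x=\xi_{b}$ using $\Psi_{b}(\xi_{b})=0$ to conclude $h(\xi_{b})=\alpha(\xi_{b})f(\xi_{b})=f(\xi_{b})/g(\xi_{b})$. Your added remarks justifying the existence of the inverse modulo $\Psi_{b}(x)^{r}$ and the nonvanishing $g(\xi_{b})\neq 0$ are points the paper leaves implicit, but they do not change the argument.
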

\begin{proof}[Proof.]
As $\textnormal{gcd}(g(x),\Psi_{b}(x))=1$, by B\'{e}zout's identity, there exist $\alpha(x),\beta(x)\in \mathbb{Q}[x]$ such that 
\begin{equation}\label{g_psi}
\alpha(x)g(x)+\beta(x)\Psi_{b}(x)^{r}=1.
\end{equation}
Substituting $\xi_{b}=e^{2\pi i/b}$ both sides of (\ref{g_psi}), we get $\alpha(\xi_{b})g(\xi_{b})=1.$
Also, 
\begin{eqnarray}
\nonumber h(x)=\res{\frac{f(x)}{g(x)}}{\Psi_{b}(x)^{r}}&=& f(x)\alpha(x) \textnormal{ rem } \Psi_{b}(x)^{r}\\
\nonumber &=& f(x)\alpha(x)-q(x)\Psi_{b}(x)^{r},
\end{eqnarray}
for some $q(x)\in \mathbb{Q}[x]$. Hence, we have $h(\xi_{b})=f(\xi_{b})\alpha(\xi_{b})=f(\xi_{b})/g(\xi_{b}).$ 
\end{proof}

\subsection{Fourier Series and $\textnormal{eval}(\cdot, \Psi_{b}(x))$}\label{subsec_dft}

The aim of this section is to give a Fourier series for $h(x)/(1-x^{b})^{r}$. It is well known that when $r=1$ the Fourier series is a finite Fourier series. For an excellent discussion on finite Fourier series see \cite{Beck}.

We first determine the finite Fourier series representation of 
$$
\frac{h(x)}{1-x^{b}},\quad \textnormal{ for }\quad h(x)=\textnormal{eval}\left(\frac{f(x)}{g(x)};\Psi_{b}(x)\right),
$$ 
where $f(x),g(x)\in \mathbb{Q}[x]$ and $g(x)$ relatively prime to $\Psi_{b}(x)$. Let $a(n)$ be a periodic function on $\mathbb{Z}$ with a period $b$. Consider the series $H(x)=\sum^{\infty}_{n=0}a(n)x^{n}$ which has the generating function   
$$
H(x)=\frac{h(x)}{1-x^{b}}\quad \textnormal{  for }\quad h(x)=\sum^{b-1}_{j=0}a(j)x^{j}.
$$
The finite Fourier series expansion is given by 
$$
a(n)=\frac{1}{b}\sum_{j=0}^{b-1}h(\xi_{b}^{j})\xi_{b}^{-nj},\quad \textnormal{ where }\quad \xi_{b}=e^{2\pi i/b}.
$$The above coefficients are, essentially, obtained by applying the inverse DFT on $(h(\xi_{b}^{0}),\cdots, h(\xi_{b}^{b-1}))^{T}$.

By $h(x)=\res{f(x)/g(x)}{\Psi_{b}(x)}$, Lemma \ref{eval_eval} and the finite Fourier series expansion we deduce 
\begin{equation}\label{finite_FS}
H(x)=\frac{h(x)}{1-x^{b}} =\sum^{\infty}_{n=0}a(n)x^{n}=\sum^{\infty}_{n=0}\left(\frac{1}{b}\sum_{j=0}^{b-1}h(\xi_{b}^{j})\xi_{b}^{-nj}\right)x^{n}= \sum_{n=0}^{\infty}\left(\frac{1}{b}\sum^{b-1}_{j=0}\frac{f(\xi_{b}^{j})}{g(\xi_{b}^{j})}\xi_{b}^{-jn}\right)x^{n}.
\end{equation}The following example is a connecting link between the $\textnormal{eval}(\cdot; \Psi_{m}(x))$ and the Fourier-Dedekind sum given in \cite{Beck}.
\begin{exmp}\label{eval_FDS}
Let $n_{1},\cdots, n_{k}$ and $b$ be pairwise relatively prime positive integers and suppose $h(x)=(1-x)\res{\frac{1}{(1-x)(1-x^{n_{1}})\cdots (1-x^{n_{k}})}}{\Psi_{b}(x)}$. Then the $n^{th}$ term in the power series of $h(x)/(1-x^{b})$ is 
$$
a(n)=\frac{1}{b}\sum^{b-1}_{j=1}\frac{\xi_{b}^{-jn}}{(1-\xi_{b}^{jn_{1}})\cdots (1-\xi_{b}^{jn_{k}})}.
$$ 
\end{exmp}

Now, we obtain the Fourier series expansion of
$
h(x)/(1-x^{b})^{r}
$
where $r> 1$ and $\textnormal{deg}(h)<rb$.  In this case, the associated Fourier series is no more a finite Fourier series. Nevertheless, one can obtain a simplification by writing 
\begin{equation}\label{decompose_h}
h(x)=\sum^{r-1}_{j=0}h_{j}(x)(1-x^{b})^{j}, \textnormal{ where } \textnormal{deg}(h_{j})<b,
\end{equation}
and thus we can express 
$$
\frac{h(x)}{(1-x^{b})^{r}}=\sum_{j=0}^{r-1}\frac{1}{(1-x^{b})^{r-j-1}}\frac{h_{j}(x)}{(1-x^{b})}.
$$
Hence, we can obtain the finite Fourier series for each $h_{j}(x)/(1-x^{b})$ by applying (\ref{finite_FS}). Therefore, we strive to obtain an explicit formula for (\ref{decompose_h}) which is a Taylor series like expansion in terms of $(1-x^{b})$. Towards this end we introduce a special differentiation notation  
$$
D_{b}(x^{k})=\left\lfloor\frac{k}{b}\right\rfloor x^{k-b}. 
$$
For $b=1$, $D_{b}=D$, the standard differentiation operation. Nevertheless, due to a deficiency in Leibniz rule, $D_{b}$ cannot be, in general, considered as a differential operator.  

\begin{thm}\label{taylor_Dm}
Suppose $h(x)$ is a polynomial with $\textnormal{deg}(h)< rb$. Then, 
$$
h(x)=\sum^{r-1}_{j=0}\frac{(-1)^{j}h^{(j)}(x)}{j!}(1-x^{b})^{j},
$$
where $h^{(j)}(x)=\textnormal{eval}\left(D_{b}^{j}h(x); 1-x^{b}\right)$. In particular, $h^{(0)}(x)$ is the remainder of $h(x)$ when divided by $1-x^{b}$.
\end{thm}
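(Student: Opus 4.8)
The plan is to reduce the statement to the ordinary (finite) Taylor expansion of a one-variable polynomial about the point $u=1$, after separating $h$ according to the residues of its exponents modulo $b$. Concretely, I would first write
$$
h(x)=\sum_{i=0}^{b-1}x^{i}\,p_{i}(x^{b}),
$$
where each $p_{i}$ is an ordinary polynomial in one variable; this is just the grouping of the monomials of $h$ by exponent modulo $b$. Since $\textnormal{deg}(h)<rb$, a monomial $x^{i+bk}$ occurring in $h$ with $0\le i<b$ satisfies $i+bk<rb$, forcing $k\le r-1$; hence $\textnormal{deg}(p_{i})\le r-1$ for every $i$.

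Next I would record the two structural facts that make $D_{b}$ and $\textnormal{eval}(\cdot;1-x^{b})$ behave like differentiation and evaluation-at-$1$ in the variable $u=x^{b}$. For the key range $0\le i<b$ one has $\lfloor (i+bk)/b\rfloor=k$, so $D_{b}(x^{i+bk})=k\,x^{i+b(k-1)}$, which gives termwise
$$
D_{b}^{\,j}\bigl(x^{i}p_{i}(x^{b})\bigr)=x^{i}\,p_{i}^{(j)}(x^{b}),
$$
where $p_{i}^{(j)}$ denotes the $j$-th ordinary derivative of $p_{i}$. The residue decomposition is essential here: within a single monomial the identity fails once an exponent exceeds $b$, which is exactly why the paper warns that $D_{b}$ is not a genuine derivation, so I must argue monomial-by-monomial rather than invoke a Leibniz rule. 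For the evaluation operator, since $D_{b}^{j}h$ is a polynomial, Lemma \ref{lem_res}(1) gives $h^{(j)}(x)=\bigl(D_{b}^{j}h\bigr)\,\textnormal{rem}\,(1-x^{b})$, and reducing modulo $1-x^{b}$ amounts to substituting $x^{b}\mapsto 1$; thus
$$
h^{(j)}(x)=\sum_{i=0}^{b-1}x^{i}\,p_{i}^{(j)}(1).
$$

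With these in hand I would assemble the claimed sum. Substituting the last display into the right-hand side of the theorem and interchanging the two finite sums yields
$$
\sum_{j=0}^{r-1}\frac{(-1)^{j}h^{(j)}(x)}{j!}(1-x^{b})^{j}=\sum_{i=0}^{b-1}x^{i}\left(\sum_{j=0}^{r-1}\frac{(-1)^{j}p_{i}^{(j)}(1)}{j!}\,(1-x^{b})^{j}\right).
$$
The inner parenthesis is precisely the Taylor expansion of $p_{i}(u)$ about $u=1$, written in powers of $(1-u)$ and evaluated at $u=x^{b}$; because $\textnormal{deg}(p_{i})\le r-1$ this truncation is exact, so the inner sum equals $p_{i}(x^{b})$. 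Summing over $i$ recovers $h(x)=\sum_{i}x^{i}p_{i}(x^{b})$, proving the identity. The final sentence is the case $j=0$, since $D_{b}^{0}h=h$ and $\textnormal{eval}(h;1-x^{b})=h\,\textnormal{rem}\,(1-x^{b})$.

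I expect the only genuinely delicate point to be the verification of $D_{b}^{\,j}\bigl(x^{i}p_{i}(x^{b})\bigr)=x^{i}p_{i}^{(j)}(x^{b})$: one must check it on monomials, track the floor function carefully, and confirm that iterating $D_{b}$ never crosses a residue class nor drives an exponent negative prematurely, so that the clean identification with ordinary differentiation in $u=x^{b}$ persists through all $r-1$ applications.
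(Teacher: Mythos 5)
Your proof is correct and takes essentially the same route as the paper: the paper's own (very terse) proof also sets $y=x^{b}$, groups the monomials of $h$ by exponent residue mod $b$, and invokes Taylor's theorem about $y=1$ on each residue class. Your write-up simply supplies the details the paper leaves implicit, namely the termwise verification that $D_{b}^{j}\bigl(x^{i}p_{i}(x^{b})\bigr)=x^{i}p_{i}^{(j)}(x^{b})$ for $0\le i<b$ and that $\textnormal{eval}(\cdot;1-x^{b})$ acts as the substitution $x^{b}\mapsto 1$.
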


\begin{proof}[Proof.]
Setting $y=x^{b}$, we can write $h(x)=\sum^{r-1}_{i=0}x^{i}\left(\sum^{(r-1)}_{j=0}\alpha_{ij}y^{j}\right)$. The result follows from the Taylor's theorem about $y=1$ on each term $\sum^{(r-1)}_{j=0}\alpha_{ij}y^{j}$.  
\end{proof}

It is easy to compute $h^{(j)}(x)$ in the above Taylor series like expansion by the following lemma. 

\begin{lem} For $k\geq 0$ and $b>1$, 
$
\textnormal{eval}(x^{k};1-x^{b})=x^{k\%b}.
$
\end{lem}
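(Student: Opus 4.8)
The plan is to reduce the eval operator to an ordinary polynomial remainder and then mirror the division argument already used for Lemma \ref{cor_phi}. Since the numerator $x^{k}$ is itself a polynomial (equivalently, the denominator is the constant $1$), property (1) of Lemma \ref{lem_res} applies directly and gives
$$
\res{x^{k}}{1-x^{b}} = \rem{x^{k}}{1-x^{b}},
$$
so it suffices to identify the remainder of $x^{k}$ modulo $1-x^{b}$.

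Next I would perform integer division, writing $k = bd + r$ with $d = \lfloor k/b \rfloor$ and $0 \le r < b$, so that $r = k\%b$ by the definition of the integer remainder. Factoring out the block of $b$ powers gives $x^{k} = (x^{b})^{d} x^{r}$, and since $x^{b} - 1 = -(1-x^{b})$ is visibly divisible by $1-x^{b}$, we have $x^{b} \equiv 1 \pmod{1-x^{b}}$ and hence $(x^{b})^{d} \equiv 1 \pmod{1-x^{b}}$. Multiplying by $x^{r}$ yields $x^{k} \equiv x^{r} \pmod{1-x^{b}}$. Concretely, expanding $(x^{b})^{d} = ((x^{b}-1)+1)^{d}$ produces an explicit quotient $q(x)$ for which $x^{k} = q(x)(1-x^{b}) + x^{r}$, exactly as in the proof of Lemma \ref{cor_phi}.

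Finally, since $0 \le r < b = \textnormal{deg}(1-x^{b})$, the polynomial $x^{r}$ has degree strictly smaller than the divisor, so it is precisely the remainder; therefore $\rem{x^{k}}{1-x^{b}} = x^{r} = x^{k\%b}$, which closes the argument. I do not anticipate any genuine obstacle: the only point requiring a moment's care is the passage from eval to $\textnormal{rem}$ via Lemma \ref{lem_res}(1), after which the claim is a one-line consequence of $x^{b} \equiv 1 \pmod{1-x^{b}}$. This is the unpowered, and hence simpler, analogue of the $\Psi_{m}(x)$ computation carried out in Lemma \ref{cor_phi}, with the case distinction there collapsing because $1-x^{b}$ has no analogue of the exceptional residue $x^{m-1}$.
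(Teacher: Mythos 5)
Your proof is correct and follows essentially the same route as the paper's: reduce eval to the polynomial remainder via Lemma \ref{lem_res}, write $k=bd+r$ by integer division, and expand $(x^{b})^{d}=((x^{b}-1)+1)^{d}$ to exhibit an explicit quotient so that $x^{k}=q(x)(1-x^{b})+x^{r}$ with $\deg(x^{r})<b$. The congruence phrasing $x^{b}\equiv 1 \pmod{1-x^{b}}$ is just a restatement of the same computation, so there is no substantive difference.
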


\begin{proof}[Proof.]
By Lemma \ref{lem_res}, the eval of a polynomial is remainder. Let $k=ab+c$ for $0\leq c<b$. Then we can write 
$$
x^{k}=x^{c}(1-(1-x^{b}))^{a}=  \left(\sum^{a}_{j=1}(-1)^{j}\begin{pmatrix} a \\ j \end{pmatrix}(1-x^{b})^{j-1}\right)(1-x^{b})+x^{c}.
$$ 
Hence, the remainder when $x^{k}$ is divided by $1-x^{b}$ equals $x^{c}=x^{k\% b}$. 
\end{proof}

Furthermore, from the above lemma it is also easy to see that substitution of $x=\xi_{b}$ in $h^{(j)}(x)$ gives us 
$$
h^{(j)}(\xi_{b})=\left.\textnormal{eval}\left(D_{b}^{j}h(x); 1-x^{b}\right)\right\rvert_{x=\xi_{b}}=D^{j}_{b}h(\xi_{b}).
$$
In particular, by the substitution rule Lemma \ref{eval_eval}, we have  $h^{(0)}(\xi_{b})=h(\xi_{b})$.

Now the Fourier series for the $j^{th}$ term corresponding to $h^{(j)}(x)$ can be written as 
\begin{equation}\label{gen_hij}
\frac{(-1)^{j}}{j!}\frac{1}{(1-x^{b})^{r-j-1}}\frac{h^{(j)}(x)}{(1-x^{b})} =\sum^{\infty}_{n=0}\sum_{n'=0}^{n}a_{1}(n')a_{2}(n-n')x^{n},
\end{equation}
where $a_{1}(n)=\frac{(-1)^{j}}{j!}\begin{pmatrix}\lfloor n/b\rfloor-r-j-2 \\ r+j+2\end{pmatrix}$ and $a_{2}(n)=\frac{1}{b}\sum_{k=0}^{b-1}h_{j}(\xi_{b}^{k})\xi_{b}^{-nk}$. In particular, for $j=0$ we have 
\begin{equation}\label{zero_term}
\frac{1}{(1-x^{b})^{r-1}}\frac{h^{(0)}(x)}{(1-x^{b})}=\sum_{n=0}^{\infty}\sum_{n'=0}^{n} \begin{pmatrix} \lfloor n'/b\rfloor-r-2 \\ r+2\end{pmatrix}\left(\frac{1}{b}\sum_{k=0}^{b-1}h(\xi_{b}^{k})\xi_{b}^{-(n-n')k}\right)x^{n}.
\end{equation}
Finally, the Fourier series for $h(x)/(1-x^{b})^{r}$ can be obtained by adding all the terms for $j=0,\cdots,(r-1)$ from (\ref{gen_hij}).

\section{Preparation Lemmas}\label{di_lemma}
In this section, we obtain the eval of the functions $(1-x)^{k}$ and $\Psi_{m}(x)$ through basic partial fractions and prove some preparation lemmas for $q$-partial fractions. We start by performing partial fraction for 
$$
\frac{1}{1-x^{m}}=\frac{1}{(1-x)\Psi_{m}(x)}.
$$ 
Differentiating and multiplying by $x$ both sides of the equation
$$
(1-x)\Psi_{m}(x)=1-x^{m}
$$
we get 
$$
x\Psi'_{m}(x)(1-x)+(-x)\Psi_{m}(x)=-mx^{m}.
$$
Applying the Fundamental Lemma \ref{lem_bez2} we get 
$$
\res{\frac{x\Psi'_{m}(x)}{-mx^{m}}}{\Psi_{m}(x)}(1-x)+\res{\frac{-x}{-mx^{m}}}{(1-x)}\Psi_{m}(x)=1.
$$
Simplifying we get the partial fraction  
\begin{equation}\label{pf_basic}
\frac{1}{(1-x)\Psi_{m}(x)}=\frac{1}{m(1-x)}+\frac{\rem{(-\frac{1}{m}x\Psi'_{m}(x))}{\Psi_{m}(x)}}{\Psi_{m}(x)}.
\end{equation}
Proceeding inductively we can prove the following result.
\begin{lem}\label{pow_k}
For $k\geq 1$ the following partial fraction holds:
\begin{equation}\label{pf_k}
\frac{1}{(1-x)^{k}\Psi_{m}(x)} =\frac{1}{m}\sum_{j=0}^{k-1}\frac{f_{j}^{(m)}(1)}{(1-x)^{k-j}}+\frac{f_{k}^{(m)}(x)}{\Psi_{m}(x)},
\end{equation}
where $f_{j}^{(m)}(x)=\rem{(-\frac{1}{m}x\Psi'_{m}(x))^{j}}{\Psi_{m}(x)}$ and $f_{0}^{(m)}(x)=1$.
\end{lem}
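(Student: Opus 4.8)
The plan is to induct on $k$, feeding the single leftover fraction at each step into the extended cover-up method (Theorem \ref{cover_up}) and the residue identities for the eval operator. The base case $k=1$ is exactly the partial fraction (\ref{pf_basic}): since $f_{0}^{(m)}(x)=1$ gives $f_{0}^{(m)}(1)=1$, the right-hand side of (\ref{pf_k}) collapses to $\tfrac{1}{m(1-x)}+\tfrac{f_{1}^{(m)}(x)}{\Psi_{m}(x)}$, which matches (\ref{pf_basic}).

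For the inductive step I would assume (\ref{pf_k}) for some $k\geq 1$ and simply multiply both sides by $\tfrac{1}{1-x}$, obtaining
\[
\frac{1}{(1-x)^{k+1}\Psi_{m}(x)}=\frac{1}{m}\sum_{j=0}^{k-1}\frac{f_{j}^{(m)}(1)}{(1-x)^{k+1-j}}+\frac{f_{k}^{(m)}(x)}{(1-x)\Psi_{m}(x)}.
\]
The sum already supplies the terms indexed $j=0,\dots,k-1$ of the target formula, so everything reduces to decomposing the single proper fraction $\tfrac{f_{k}^{(m)}(x)}{(1-x)\Psi_{m}(x)}$; here $\deg f_{k}^{(m)}<\deg\Psi_{m}\leq\deg\bigl((1-x)\Psi_{m}\bigr)$, so the hypotheses of Theorem \ref{cover_up} hold with $p_{1}(x)=1-x$ and $p_{2}(x)=\Psi_{m}(x)$. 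Applying it gives
\[
\frac{f_{k}^{(m)}(x)}{(1-x)\Psi_{m}(x)}=\frac{\res{f_{k}^{(m)}(x)/\Psi_{m}(x)}{1-x}}{1-x}+\frac{\res{f_{k}^{(m)}(x)/(1-x)}{\Psi_{m}(x)}}{\Psi_{m}(x)}.
\]
Because eval modulo the linear polynomial $1-x$ is evaluation at $x=1$ and $\Psi_{m}(1)=m$, the first numerator equals $\tfrac{1}{m}f_{k}^{(m)}(1)$, which is precisely the missing $j=k$ term $\tfrac{1}{m}\tfrac{f_{k}^{(m)}(1)}{1-x}$ of the target sum. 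It therefore remains to identify the second numerator with $f_{k+1}^{(m)}(x)$.

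The crux is the inverse of $1-x$ modulo $\Psi_{m}(x)$. Differentiating $(1-x)\Psi_{m}(x)=1-x^{m}$ and multiplying by $x$ yields
\[
x(1-x)\Psi'_{m}(x)=x\Psi_{m}(x)-mx^{m}\equiv -m\pmod{\Psi_{m}(x)},
\]
where I used $x^{m}\equiv 1$ (from $x^{m}-1=(x-1)\Psi_{m}(x)$) and $x\Psi_{m}(x)\equiv 0$. Hence $(1-x)\bigl(-\tfrac{1}{m}x\Psi'_{m}(x)\bigr)\equiv 1\pmod{\Psi_{m}(x)}$, so $-\tfrac{1}{m}x\Psi'_{m}(x)$ is the sought inverse. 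Combining this with the substitution rule of Lemma \ref{lem_res} and the congruence $f_{k}^{(m)}(x)\equiv\bigl(-\tfrac{1}{m}x\Psi'_{m}(x)\bigr)^{k}\pmod{\Psi_{m}(x)}$ (which holds by the very definition of $f_{k}^{(m)}$), the second numerator becomes $\rem{\bigl(-\tfrac{1}{m}x\Psi'_{m}(x)\bigr)^{k+1}}{\Psi_{m}(x)}=f_{k+1}^{(m)}(x)$. Reassembling the pieces recovers (\ref{pf_k}) with $k$ replaced by $k+1$, closing the induction. I expect the inverse computation to be the only genuinely non-routine step; the remainder is index bookkeeping and direct use of the eval machinery established earlier.
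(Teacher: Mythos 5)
Your proof is correct and follows essentially the same route as the paper: induction on $k$, multiplying the inductive identity by $\tfrac{1}{1-x}$, and then decomposing the leftover fraction $\tfrac{f_{k}^{(m)}(x)}{(1-x)\Psi_{m}(x)}$ into the $j=k$ term plus $\tfrac{f_{k+1}^{(m)}(x)}{\Psi_{m}(x)}$. The only difference is that you spell out the step the paper compresses into ``performing partial fractions on the second term,'' namely that $-\tfrac{1}{m}x\Psi'_{m}(x)$ is the inverse of $1-x$ modulo $\Psi_{m}(x)$ (the same differentiation trick the paper uses to derive its base case via the Fundamental Lemma), which is a welcome clarification rather than a departure.
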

\begin{proof}[Proof.]
We prove by induction on $k$. The equation (\ref{pf_basic}) is the base case $k=1$. Suppose the result holds for $k$.  Multiplying both sides of (\ref{pf_k}) by $\frac{1}{1-x}$we get 
$$
\frac{1}{(1-x)^{k+1}\Psi_{m}(x)} =\frac{\sum_{j=0}^{k-1}f_{j}^{(m)}(1)(1-x)^{j}}{m(1-x)^{k+1}}+\frac{f_{k}^{(m)}(x)}{(1-x)\Psi_{m}(x)}
$$
By performing partial fractions on the second term we have 
\begin{eqnarray}
\nonumber \frac{1}{(1-x)^{k+1}\Psi_{m}(x)} &=& \frac{\sum_{j=0}^{k-1}f_{j}^{(m)}(1)(1-x)^{j}}{m(1-x)^{k+1}}+\frac{f_{k}^{(m)}(1)}{m(1-x)}+\frac{f_{k+1}^{(m)}(x)}{\Psi_{m}(x)}\\
\nonumber &=& \frac{\sum_{j=0}^{k}f_{j}^{(m)}(1)(1-x)^{j}}{m(1-x)^{k+1}}+\frac{f_{k+1}^{(m)}(x)}{\Psi_{m}(x)}.
\end{eqnarray}
Hence the result is proved.   
\end{proof}

Comparing the terms in equation (\ref{young_eqn}) and the terms in equation (\ref{pf_k}) we are motivated to define the reciprocal degenerate Bernoulli polynomial 
\begin{equation}\label{fkm_rdb}
\frac{(-1)^{k}}{k!}\tilde{\beta}_{k}(m,x)=f_{k}^{(m)}(x) = \rem{\left(-\frac{1}{m}x\Psi'_{m}(x)\right)^{k}}{\Psi_{m}(x)}.
\end{equation}
The reciprocal degenerate Bernoulli number is given by $\tilde{\beta}_{k}(m)=\tilde{\beta}_{k}(m,1).$ 

Next, we prove another preparation lemma for $q$-partial fractions. 
\begin{lem}\label{gcd}
Let $\textnormal{gcd}(m,n)=1$ and let $a,b>0$ satisfying $am-bn=1$. Then, 
\begin{equation}\label{n_m}
\res{\frac{1}{\Psi_{m}(x)}}{\Psi_{n}(x)} = \rem{\sum_{j=0}^{a-1}x^{jm\%n}}{\Psi_{n}(x)}.
\end{equation}
\end{lem}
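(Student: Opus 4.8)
The plan is to produce an explicit inverse of $\Psi_{m}(x)$ modulo $\Psi_{n}(x)$ and then read off the answer from the definition of the eval operator. Since $\gcd(m,n)=1$, the only root $\Psi_{m}(x)$ and $\Psi_{n}(x)$ could share is a common nontrivial $m$-th and $n$-th root of unity, of which there are none, so the two polynomials are coprime and $\res{\frac{1}{\Psi_{m}(x)}}{\Psi_{n}(x)}$ is well-defined. Throughout I would use the elementary congruence $x^{n}\equiv 1 \pmod{\Psi_{n}(x)}$, which is immediate from $1-x^{n}=(1-x)\Psi_{n}(x)$, so that $\Psi_{n}(x)\mid 1-x^{n}$. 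By Definition \ref{eval}, once I know that some $\alpha(x)$ satisfies $\alpha(x)\Psi_{m}(x)\equiv 1 \pmod{\Psi_{n}(x)}$, I will have $\res{\frac{1}{\Psi_{m}(x)}}{\Psi_{n}(x)}=\rem{\alpha(x)}{\Psi_{n}(x)}$, and the whole lemma reduces to identifying $\alpha(x)$.

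The key computation is the polynomial identity
$$
\left(\sum_{j=0}^{a-1}x^{jm}\right)\Psi_{m}(x)=\Psi_{am}(x).
$$
I would prove this not via the geometric-series formula (invalid at $x=1$) but by a counting argument: expanding the left side as $\sum_{j=0}^{a-1}\sum_{i=0}^{m-1}x^{jm+i}$, the division algorithm shows that the exponents $jm+i$ with $0\le j<a$ and $0\le i<m$ run through $0,1,\dots,am-1$ each exactly once, giving $\sum_{k=0}^{am-1}x^{k}=\Psi_{am}(x)$. Next I use $am=bn+1$, so that $\Psi_{am}(x)=\sum_{k=0}^{bn}x^{k}$, and group the first $bn$ terms into $b$ blocks of length $n$ (again by the same counting identity) to obtain
$$
\Psi_{am}(x)=\left(\sum_{l=0}^{b-1}x^{ln}\right)\Psi_{n}(x)+x^{bn}\equiv x^{bn}\equiv 1 \pmod{\Psi_{n}(x)},
$$
where the final step uses $x^{bn}=(x^{n})^{b}\equiv 1$. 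This establishes that $\alpha(x)=\sum_{j=0}^{a-1}x^{jm}$ is the inverse of $\Psi_{m}(x)$ modulo $\Psi_{n}(x)$, hence $\res{\frac{1}{\Psi_{m}(x)}}{\Psi_{n}(x)}=\rem{\sum_{j=0}^{a-1}x^{jm}}{\Psi_{n}(x)}$.

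The last step is to match the stated form by replacing $x^{jm}$ with $x^{jm\%n}$. Since $x^{n}\equiv 1 \pmod{\Psi_{n}(x)}$, each term satisfies $x^{jm}=(x^{n})^{\lfloor jm/n\rfloor}x^{jm\%n}\equiv x^{jm\%n}$, so $\sum_{j=0}^{a-1}x^{jm}$ and $\sum_{j=0}^{a-1}x^{jm\%n}$ are congruent modulo $\Psi_{n}(x)$ and therefore have the same remainder, yielding \eqref{n_m}. I do not expect a genuine obstacle here; the argument is a short chain of bookkeeping steps, and the two points that require care are verifying the product identity by the division algorithm rather than by a formal geometric sum, and performing every reduction modulo $\Psi_{n}(x)$ (a proper divisor of $1-x^{n}$) rather than modulo $1-x^{n}$. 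The hypotheses $a,b>0$ enter only to guarantee that the summation $\sum_{j=0}^{a-1}$ and the block grouping $\sum_{l=0}^{b-1}$ are nonempty.
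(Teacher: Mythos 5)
Your proof is correct, and it reaches the identity by a somewhat different route than the paper, even though both hinge on the same multiplier $\sum_{j=0}^{a-1}x^{jm}$ and the fact $am=bn+1$. The paper never exhibits a B\'{e}zout inverse explicitly: it rewrites $\frac{1}{\Psi_{m}(x)}$ as $\frac{1-x}{1-x^{m}}$, multiplies numerator and denominator by $x^{(a-1)m}+\cdots+1$ so that the denominator becomes $1-x^{am}$, and then invokes its eval-calculus machinery --- the substitution rule of Lemma \ref{lem_res}(3) together with Lemma \ref{cor_phi} --- to replace $x^{am}$ by $x$ modulo $1-x^{n}$ (a step written with the formal negative power $x^{-bn}$), after which the factor $1-x$ cancels inside the eval. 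Your argument instead goes straight to Definition \ref{eval}: you verify the congruence $\bigl(\sum_{j=0}^{a-1}x^{jm}\bigr)\Psi_{m}(x)=\Psi_{am}(x)=\bigl(\sum_{l=0}^{b-1}x^{ln}\bigr)\Psi_{n}(x)+x^{bn}\equiv 1 \pmod{\Psi_{n}(x)}$ via a division-algorithm counting identity, so that $\sum_{j=0}^{a-1}x^{jm}$ is itself the inverse of $\Psi_{m}(x)$ modulo $\Psi_{n}(x)$, and the lemma then drops out of the definition of eval plus the reduction $x^{jm}\equiv x^{jm\%n}$. What your route buys is self-containedness and rigor at the one delicate spot: you use only polynomial identities and the definition, avoiding both the substitution-rule lemmas and the negative-exponent manipulation (and you also explicitly check the coprimality of $\Psi_{m}$ and $\Psi_{n}$ needed for eval to be defined, which the paper leaves implicit). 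What the paper's route buys is a demonstration of its eval calculus in action: the proof is partly there to showcase that evaluation modulo $\Psi_{n}(x)$ can be carried out by first working modulo $1-x^{n}$ and substituting $x^{n}\mapsto 1$, which is exactly the technique reused in the later $q$-partial-fraction derivations.
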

\begin{proof}[Proof.]
Observe that we can write 
$$
\res{\frac{1}{\Psi_{m}(x)}}{\Psi_{n}(x)} = \res{\frac{1-x}{1-x^{m}}}{\Psi_{n}(x)}.
$$
Upon multiplying and dividing by $(x^{(a-1)m}+\cdots+1)$ we have 
\begin{equation}\label{eq_am}
\res{\frac{1-x}{1-x^{m}}}{\Psi_{n}(x)}=\res{\frac{(1-x)(x^{(a-1)m}+\cdots+1)}{1-x^{am}}}{\Psi_{n}(x)}.
\end{equation}
By Lemma \ref{lem_res}(3), we can replace any one of the equivalent polynomials modulo $\Psi_{n}(x)$ at both numerator and the denominator. Moreover, by the Lemma \ref{cor_phi} when we perform modulo $\Psi_{n}(x)$ we can first work modulo $(1-x^{n})$ and replacing $x^{n}$ by $1$. As $am-bn=1$, we have 
$$
1-x^{am}=1-x^{am}x^{-bn}=1-x^{am-bn}=1-x  \mod (1-x^{n}).
$$
Therefore, from (\ref{eq_am}) we can write 
\begin{eqnarray}
\nonumber \res{\frac{1}{\Psi_{m}(x)}}{\Psi_{n}(x)} &=& \res{\frac{(1-x)(x^{(a-1)m}+\cdots+1)}{1-x}}{\Psi_{n}(x)}\\
\nonumber &=& \res{x^{(a-1)m}+\cdots+1}{\Psi_{n}(x)}\\
\nonumber &=& \rem{\sum_{j=0}^{a-1}x^{jm\%n}}{\Psi_{n}(x)}.
\end{eqnarray} 
\end{proof}

The above result can be made precise to provide an exact partial fraction formula. As the proof can be directly obtained with the help of above lemma, we state the result without proof. 

\begin{thm}
Let $m,n$ be two integers that are coprime and, let $m\geq 2$ and $n\geq3$. Consider the smallest possible positive numbers $a,b,\alpha,\beta$ for which we have $am-bn=1$ and $\alpha n- \beta m=1$. If $a>\beta$ then the partial fraction is given by 
$$
\frac{1-x}{(1-x^{m})(1-x^{n})}=\frac{\sum_{j=0}^{\alpha-1}x^{jn\%m}}{1-x^{m}}-\frac{\sum_{j=0}^{\beta-1}x^{(jm+1)\%n}}{1-x^{n}}
$$
else if $a<\beta$ we have the partial fraction 
$$
\frac{1-x}{(1-x^{m})(1-x^{n})}=\frac{\sum_{j=0}^{a-1}x^{jm\%n}}{1-x^{n}}-\frac{ \sum_{j=0}^{b-1}x^{(jn+1)\%m}}{1-x^{m}}.
$$
\end{thm}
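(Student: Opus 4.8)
The plan is to recognize the asserted two--term expansion as the extended cover--up decomposition of $1/(\Psi_m(x)\Psi_n(x))$ rescaled by $1-x$, and then to identify each numerator with its claimed closed form by a complete--residue--system argument. First I would factor $1-x^{m}=(1-x)\Psi_m(x)$ and $1-x^{n}=(1-x)\Psi_n(x)$. Since $\gcd(m,n)=1$ the polynomials $\Psi_m(x)$ and $\Psi_n(x)$ are coprime, so
$$
\frac{1-x}{(1-x^{m})(1-x^{n})}=\frac{1}{(1-x)\Psi_m(x)\Psi_n(x)}.
$$
Applying the extended cover--up method (Theorem \ref{cover_up}) to $1/(\Psi_m(x)\Psi_n(x))$ and evaluating the two residues with Lemma \ref{gcd} (the second with the roles of $m,n$ interchanged) gives
$$
\frac{1}{\Psi_m(x)\Psi_n(x)}=\frac{B(x)}{\Psi_m(x)}+\frac{C(x)}{\Psi_n(x)},
$$
where $B(x)=\rem{\sum_{j=0}^{\alpha-1}x^{jn\%m}}{\Psi_m(x)}$ and $C(x)=\rem{\sum_{j=0}^{a-1}x^{jm\%n}}{\Psi_n(x)}$. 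Dividing by $1-x$ and using $(1-x)\Psi_m=1-x^{m}$, $(1-x)\Psi_n=1-x^{n}$ yields the two--term form $\frac{1-x}{(1-x^{m})(1-x^{n})}=\frac{B(x)}{1-x^{m}}+\frac{C(x)}{1-x^{n}}$, so it only remains to match $B,C$ against the claimed numerators and to track the sign; this is exactly where the two cases split.

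Next I would establish the arithmetic relations $a+\beta=n$ and $b+\alpha=m$. Reducing $am-bn=1$ and $\alpha n-\beta m=1$ modulo $n$ gives $(a+\beta)m\equiv0\pmod n$, and modulo $m$ gives $(b+\alpha)n\equiv0\pmod m$; the range constraints on the smallest positive solutions then force the stated equalities. The combinatorial heart is the claim that, with $\tilde C(x)=\sum_{j=0}^{a-1}x^{jm\%n}$ and $Q(x)=\sum_{j=0}^{\beta-1}x^{(jm+1)\%n}$, the exponent sets $\{jm\bmod n:0\le j<a\}$ and $\{(jm+1)\bmod n:0\le j<\beta\}$ are disjoint; as they contain $a+\beta=n$ elements in total they then partition $\{0,\dots,n-1\}$, whence $\tilde C(x)+Q(x)=\Psi_n(x)$ as polynomials. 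Disjointness is the only nontrivial point: a coincidence $j_1 m\equiv j_2 m+1\pmod n$ forces $j_1-j_2\equiv a\pmod n$ (because $am\equiv1$), and the only representatives of $a$ modulo $n$ are $a$ and $a-n=-\beta$, both lying outside the range $[-(\beta-1),a-1]$ of $j_1-j_2$. The mirror identity $\sum_{j=0}^{\alpha-1}x^{jn\%m}+\sum_{j=0}^{b-1}x^{(jn+1)\%m}=\Psi_m(x)$ follows by interchanging $m$ and $n$.

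Finally I would carry out the case analysis. In the case $a>\beta$, the relation $am-bn=1$ together with $a+\beta=n$ forces $b\ge\alpha$ (indeed $2a\ge n+1$ gives $2bn=2am-2\ge nm+(m-2)\ge nm$, i.e.\ $2b\ge m$). Hence the exponent $m-1$, which could occur in $P(x)=\sum_{j=0}^{\alpha-1}x^{jn\%m}$ only at the index $j=b$, is absent, so $\deg P\le m-2$ and $B(x)=P(x)$. On the other side $x^{n-1}$ does occur in $\tilde C$ (at $j=\beta<a$), hence not in $Q$ by disjointness, so $\deg Q\le n-2$; combined with $\tilde C+Q=\Psi_n$ this gives $C(x)=\rem{(-Q(x))}{\Psi_n(x)}=-Q(x)$. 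Substituting $B=P$ and $C=-Q$ into the two--term form produces exactly the asserted expansion, with the minus sign coming from the reduction of $\tilde C$ modulo $\Psi_n$. The case $a<\beta$ is the mirror image: now $C=\tilde C$ is already reduced while $B$ reduces to $-\sum_{j=0}^{b-1}x^{(jn+1)\%m}$, giving the second displayed formula. The possibility $a=\beta$ cannot occur, since $a+\beta=n$ would make $n$ even and then $am-bn=1$ would force $a=1$ and $n=2$, contradicting $n\ge3$. The main obstacle is the residue--partition claim of the second paragraph and the accompanying degree/sign bookkeeping that decides, in each case, which numerator is already reduced and which acquires the minus sign; everything else is a direct application of Theorem \ref{cover_up} and Lemma \ref{gcd}.
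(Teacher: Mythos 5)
Your proposal is correct and follows exactly the route the paper intends: the paper states this theorem \emph{without proof}, remarking only that it ``can be directly obtained with the help of the above lemma,'' and your argument is precisely that derivation --- the extended cover-up method (Theorem \ref{cover_up}) applied to $1/(\Psi_m(x)\Psi_n(x))$ plus Lemma \ref{gcd}, together with the bookkeeping the paper leaves to the reader, namely $a+\beta=n$ and $b+\alpha=m$, the residue-partition identity $\sum_{j=0}^{a-1}x^{jm\%n}+\sum_{j=0}^{\beta-1}x^{(jm+1)\%n}=\Psi_n(x)$, and the degree/sign case analysis deciding which numerator is already reduced modulo the relevant $\Psi$. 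All steps check out, including the exclusion of $a=\beta$ and the implication $a>\beta\Rightarrow b\geq\alpha$, so your writeup in fact supplies the details the paper omits.
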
  
A similar result but addressing a different case is considered using iterated Laurent series in \cite[Proposition 4.3, Lemma 4.5]{Xin}.

\section{$q$-Partial Fractions and Denumerants}\label{subsec_qpf}
Consider the general rational polynomial for $q$-partial fractions  
\begin{equation}\label{general_qpf}
F(x)=\frac{p(x)}{(1-x)^{m}(1-x^{n_{1}})^{r_{1}}\cdots(1-x^{n_{k}})^{r_{k}}},
\end{equation}
where $n_{1},\cdots, n_{k}$ are pairwise relatively primes, $s=r_{1}+\cdots+r_{k}$, $\textnormal{deg }(p)<m+r_{1}n_{1}+\cdots+r_{k}n_{k}$. We determine the $q$-partial fraction of (\ref{general_qpf}) by the factorized expression
\begin{equation}\label{gen_qpf}
 \frac{p(x)}{(1-x)^{m+s}\Psi_{n_{1}}(x)^{r_{1}}\cdots \Psi_{n_{k}}(x)^{r_{k}}}=\frac{h_{0}(x)}{(1-x)^{m+s}} + \sum^{k}_{j=1}\frac{h_{j}(x)}{(1-x^{n_{j}})^{r_{j}}}.
\end{equation}
By the extended cover-up method we have  
$$
h_{0}(x)=\left(p(x)\prod_{j=1}^{k}\textnormal{eval}\left(\frac{1}{\Psi_{n_j}(x)^{r_{j}}}; (1-x)^{m+s}\right)\right) \textnormal{ rem }(1-x)^{m+s}.
$$
By expanding the terms of $h_{0}(x)$ using Table \ref{eval_table} we have
\begin{equation}\label{h0_term}
h_{0}(x)=\sum^{m+s-1}_{j=0}c_{j}(1-x)^{j},
\end{equation}
where, for $0\leq j<m+s$,
$$
c_{j}=\frac{1}{n_{1}^{r_{1}}\cdots n_{k}^{r_{k}}}\sum_{i_{0}+i_{1}+\cdots+i_{k}=j}(-1)^{j}\frac{D^{i_{0}}p(1)}{i_{0}!}\frac{\tilde{\beta}^{(r_{1})}_{i_{1}}(n_{1})\cdots \tilde{\beta}^{(r_{k})}_{i_{k}}(n_{k})}{i_{1}!\cdots i_{k}!},
$$
$\tilde{\beta}_{k}^{(r)}(n)$ is the reciprocal degenerate Bernoulli of order $r$ given in (\ref{r_RDB}).

Again, by the extended cover-up method we have     
\begin{equation}\label{hj_term}
h_{j}(x) =(1-x)^{r_{j}}\res{\frac{p(x)}{(1-x)^{m+s}}\prod_{\stackrel{i=1}{i\neq j}}^{k}\frac{1}{\Psi_{n_{i}}(x)^{r_{i}}}}{\Psi_{n_{j}}(x)^{r_{j}}},
\end{equation}
where $\textnormal{deg}(h_{j})<n_{j}r_{j}-1$. This polynomial $h_{j}(x)$ can be obtained by Table \ref{eval_table}. 

In order to obtain a Fourier series expansion (not necessarily finite) we write
\begin{equation}\label{hij_term}
\frac{h_{j}(x)}{(1-x^{n_{j}})^{r_{j}}}=\sum^{r_{j}-1}_{i=0}\frac{h_{ij}(x)}{(1-x^{n_{j}})^{r_{j}-i}}=\sum_{i=0}^{r_{j}-1}\frac{1}{(1-x^{n_{j}})^{r_{j}-i-1}}\frac{h_{ij}(x)}{(1-x^{n_{j}})},
\end{equation}
where $\textnormal{deg}(h_{ij})<n_{j}$ and use the finite Fourier series expansion of $h_{ij}(x)/(1-x^{n_{j}})$. By Theorem \ref{taylor_Dm} we can indeed express $h_{j}(x)$ in terms of $h_{ij}(x)$, which is the remainder when $D_{r_{j}}^{i}h_{j}(x)$ is divided by $1-x^{n_{j}}$. Therefore, $\textnormal{deg}(h_{ij})<n_{j}$. In particular, $h_{0j}(x)$ is the remainder when $h_{j}(x)$ is divided by $1-x^{n_{j}}$. This establishes Theorem \ref{main_qpf}.

\subsection{Computing the Denumerants}
We express the denumerants in a concise manner using the piecewise linear function $\lfloor \frac{t}{n}\rfloor$, the greatest integer less than or equal to $\frac{t}{n}$. Suppose $h_{ij}(x)=\sum^{n_{j}-1}_{\alpha=0}c_{\alpha}^{(ij)}x^{\alpha}$. Then, by (\ref{gen_qpf}) and (\ref{hij_term}), the denumerant is given by
$$
d(t;\textbf{A})=\sum^{m+s-1}_{j=0}c_{j}\begin{pmatrix}t+m+s-j-1\\ t \end{pmatrix}+\sum^{k}_{j=1}\sum^{r_{j}-1}_{i=0}\begin{pmatrix}\lfloor \frac{t}{n_{j}}\rfloor +r_{j}-i-1\\ r_{j}-i-1\end{pmatrix}c_{t\%n_{j}}^{(ij)}.
$$
Assuming the multiplications of the polynomials is done using the FFT algorithm, the computational cost of setting up the equations for computing the denumerant requires $O(n\log n)$ steps, where $n=\max\{n_{1},\cdots,n_{k}\}$, which is polynomially bound on the numeric values of the input.

Setting $(r_{1},\cdots,r_{k})=(1,\cdots,1)$ in the above formula we obtain the corresponding denumerant given in (\ref{denumerant_fds}). For the simplest case $p(x)=1$ and $m=0$, we have by Theorem \ref{main_qpf}  and Lemma \ref{pow_k}
$$
c_{j}=\frac{1}{n_{1}\cdots n_{k}}\sum_{i_{1}+\cdots+i_{k}=j} f^{(n_1)}_{i_{1}}(1)\cdots f^{(n_k)}_{i_{k}}(1), \quad \textnormal{ for }0\leq j\leq k-1,
$$
and denoting $h_{j}(x)=\sum^{n_{j}-1}_{i=0}c_{i}^{(j)}x^{i}$ the denumerant is given by 
\begin{equation}\label{denum}
d(t; n_{1},\cdots,n_{k})=\sum^{k-1}_{j=0}c_{j}
\begin{pmatrix}
t+k-j-1\\
t\\
\end{pmatrix}
+\sum^{k}_{j=1}\frac{1}{n_{j}}c^{(j)}_{t\%n_{j}}.
\end{equation}
By this formula one can notice that for the pairwise relatively prime case (and without repetitions) the denumerant is indeed a quasi-polynomial with the periodicity only in the constant term (as observed in \cite{Beck1}).

\begin{exmp}
We compute the denumerant of the set of coprimes $\{9,17,31\}$. So, we consider the $q$-partial fraction of 
$$
F(x)=\frac{1}{(1-x^{9})(1-x^{17})(1-x^{31})}.
$$
Here $k=3$ and $n_{1}=9, n_{2}=17$ and $n_{3}=31$. 
One can easily determine $f_{0}^{(m)}(1)=1$ and $f_{1}^{(m)}(1)=(m-1)/2$.

We first compute $f_{i}^{(n_j)}(1)$ and obtain
$$
f^{(9)}_{0}(1)f^{(17)}_{0}(1)f^{(31)}_{0}(1)=1,\textnormal{ and }  f^{(9)}_{1}(1)+f^{(17)}_{1}(1)+f^{(31)}_{1}(1) = 27.
$$ 
Furthermore, $a_{i}^{(j)}$ for each $j=1,2,3$ satisfying 
$
a_{j}^{(j)}=1 \textnormal{ and }a_{i}^{(j)}n_{i}-b_{i}^{(j)}n_{j}=1,\  a^{(j)}_{i}, b^{(j)}_{i}>0 \textnormal{ for }i\neq j
$  
can be easily computed to obtain 
$$a_{1}^{(1)}= 1,\quad a_{2}^{(1)}=8,\quad a_{3}^{(1)}=7,\quad  a_{1}^{(2)}= 2,\quad  a_{2}^{(2)}=1, \quad  $$
$$a_{3}^{(2)}=11,\quad a_{1}^{(3)}=7,\quad a_{2}^{(3)}=11, \textnormal{ and } a_{3}^{(3)}=1.$$
Substituting for $f_{i}^{(n_j)}(1)$ and $a^{(j)}_{i}$ with simplifications yields:
\begin{eqnarray}
\nonumber g_{0}(x)&=&\left(1+27(1-x)\right),\\
\nonumber g_{1}(x) &=&\left(-2-6x-3x^2-2x^3-3x^4-6x^5-2x^6\right),\\
\nonumber g_{2}(x)&=& \left(13+4x+7x^2+5x^3-2x^4+3x^5+3x^6-2x^7+5x^8+7x^9+4x^{10}+13x^{11}-x^{13}+10x^{14}-x^{15}\right),\\
\nonumber g_{3}(x)&=&\left(14+13x-3x^{2}-3x^{3}+13x^{4}+14x^{5}+2x^{7}-11x^{8}+23x^{9}\right.\\
\nonumber & & \quad\quad +11x^{10}-16x^{11}+4x^{12}+9x^{13}-x^{14}+5x^{15}-4x^{16}+3x^{17}+26x^{18}+3x^{19}\\
\nonumber & & \quad\quad \left. -4x^{20}+5x^{21}-x^{22}+9x^{23}+4x^{24}-16x^{25}+11x^{26}+23x^{27}-11x^{28}+2x^{29}\right).
\end{eqnarray}
Thus the $q$-partial fraction is 
$$
F(x)=\frac{1}{9\cdot17\cdot 31}\frac{g_{0}(x)}{(1-x)^{3}}+ \frac{1}{9}\frac{g_{1}(x)}{(1-x^{9})}+\frac{1}{17}\frac{g_{2}(x)}{(1-x^{17})}+\frac{1}{31}\frac{g_{3}(x)}{(1-x^{31})}.
$$
Further, the denumerant is given by 
$$
d(t;9,17,31)=\frac{1}{4743}\left(\begin{pmatrix}
t+2\\
t\\
\end{pmatrix}+27\begin{pmatrix}
t+1\\
t\\
\end{pmatrix}\right)+\frac{1}{9}c^{(1)}_{t\%9}+\frac{1}{17}c^{(2)}_{t\%17}+\frac{1}{31}c^{(3)}_{t\%31}
$$
where $c^{(j)}_{t\%n_{j}}$ is the $(t\%n_{j})^{th}$ coefficient of $g_{j}(x)$.
\end{exmp}

Although in the above example we have $k=3$, for practical purpose we have considered only two factors $(1-x)^{-2}$ which considerably simplified the calculations. As a consequence of this simplification, one can notice that the coefficients of $g_{j}(x)$, for $1\leq j\leq 3$, do not add up to zero. Furthermore, for the ease of computations, we used $f^{(m)}_{k}(1)$ given in (\ref{fkm_rdb}) instead of $\tilde{\beta}_{k}(m)$.

\section{Generalized Fourier-Dedekind Sum}\label{sec_recip}
Consider the generating function (\ref{transformed_new}) with $r_{1}=\cdots=r_{k}=r$  
\begin{equation}\label{Fx1}
F(x)=\frac{p(x)}{(1-x)^{m}(1-x^{n_1})^{r}\cdots (1-x^{n_k})^{r}}=\sum^{\infty}_{n=0}a(n)x^{n}, 
\end{equation}
where $\textnormal{deg }p(x)=d<m+rs$ and $s=n_{1}+\cdots+n_{k}$. Even if the multiplicities are varying, one can always multiply and divide appropriately by a $p(x)$ to ensure $r_1=\cdots=r_k=r$. For example, in (\ref{exmp_5_case}) we can have $$
\prod^{5}_{j=1}\frac{1}{1-x^{j}}=\frac{(1+x^{2})(1-x^3)(1-x^5)}{(1-x)(1-x^{3})^{2}(1-x^{4})^{2}(1-x^{5})^{2}}.
$$
Qualitatively speaking, this will not lead to anything new.

Taking motivation from the Substitution Rule 2 (Lemma \ref{eval_eval}) we state the following generalization of the Fourier-Dedekind sum. 

\begin{defn}\label{FDS}
For a positive integer $b$ that is coprime to $n_{j}$, for $1\leq j\leq k$, the $k$-dimensional generalized Fourier-Dedekind sum $k>0$ is 
\begin{equation}\label{gen_FDS}
S^{(m, r, p(x))}_{t}(n_{1},\cdots,n_{k}; b)=\frac{1}{b^{r}}\sum^{b-1}_{j=1}\frac{p(\xi_{b}^{j})\xi_{b}^{jt}}{(1-\xi_{b}^{j})^{m}(1-\xi_{b}^{jn_{1}})^{r}\cdots(1-\xi_{b}^{jn_{k}})^{r}}
\end{equation}
and for $k=0$ we define 
\begin{equation}\label{trivial_gen_FDS}
S^{(m, r, p(x))}_{t}(b)=\frac{1}{b^{r}}\sum^{b-1}_{j=1}\frac{p(\xi_{b}^{j})\xi_{b}^{jt}}{(1-\xi_{b}^{j})^{m}},
\end{equation}
where $\xi_{b}=e^{2\pi i/b}$. 
\end{defn} 
This is a generalization of the Fourier-Dedekind sum given in \cite{Beck1,Beck,Tuskerman,Zagier,Gessel}. Indeed, if we set $p(x)=1, r=1$ and $m=0$ we recover the Fourier-Dedekind sum in \cite{Beck,Tuskerman}
\begin{equation}\label{classical_FDS}
s_{t}(n_{1},\cdots,n_{k}; b)=\frac{1}{b}\sum^{b-1}_{j=1}\frac{\xi_{b}^{jt}}{(1-\xi_{b}^{jn_{1}})\cdots(1-\xi_{b}^{jn_{k}})},
\end{equation}
that is, $S^{(0,1,1)}_{t}=s_{t}$. Reciprocity theorem for $p(x)=1, m=1$ was considered in \cite{Beck1}. 

\begin{rema}
By definition we have $S^{(m,1,x^{\alpha})}_{t}=S^{(m,1,1)}_{t+\alpha}$.  Thus, suppose $p(x)=\sum^{s}_{\alpha=0}a_{\alpha}x^{\alpha}$ then we can write the generalized Fourier-Dedekind sum as a linear combination $S^{(m,r,p(x))}_{t}=\sum^{s}_{\alpha=0}a_{\alpha}S^{(m,r,1)}_{t+\alpha}$. Nevertheless, as it happens in our case, if the coefficients of $p(x)$ are not known apriori (for instance, if $p(x)=\textnormal{eval}(f(x)/g(x);\Psi_{m}(x))$) then one can't make such simplifications. 
\end{rema}Following are some easy to prove properties.  
\begin{enumerate}
\item $S^{(m,r,p(x))}_{t}(n_{1},\cdots,n_{k};b )$ is rational and is symmetric in $n_{1},\cdots, n_{k}$. 
\item $S^{(m,r,p(x))}_{t}(n_{1},\cdots,n_{k};b )$ only depends on $n_{i}\mod b$. 
\item $S^{(m,r, p(x))}_{t}(\lambda n_{1},\cdots,\lambda n_{k};b )=S^{(m,r,p(x))}_{t}(n_{1},\cdots,n_{k};b )$ if $\textnormal{gcd}(\lambda,b)=1$. 
\item $S^{(m,1,p(x))}_{t}(n_{1},\cdots,n_{k};b )=s_{t}(n_{1},\cdots,n_{k};b )*_{t}S^{(m,1,p(x))}_{t}(b)$. (Follows from \cite[Theorem 7.10]{Beck}.)\label{prop_4}
\end{enumerate}
 
By reciprocity law we mean identities for certain sums of generalized Fourier-Dedekind sums. In the literature, there are essentially two types of reciprocity laws in relation to the Dedekind sums: Zagier ($t=0$) and Rademacher ($1\leq t <\lambda$, for some $\lambda$) \cite{Beck}. 

Property (\ref{prop_4}) suggests that the reciprocity law satisfied by $s_{t}$ also holds for $S^{(m,1,p(x))}_{t}$ (i.e. for $r=1$ case). So, we first set forth to prove the reciprocity result for the case $r=1$ and $m\geq 0$ in (\ref{Fx1}). Suppose the formal power series for  $F(x)$ is given $\sum^{\infty}_{t=0}a(t)x^{t}$. Then we have the coefficient $a(t)$ from (\ref{denumerant_fds})
$$
a(t) = \sum^{m+k-1}_{j=0}c_{j}
\begin{pmatrix}
t+m+k-j-1\\
t\\
\end{pmatrix}+\sum_{j=1}^{k}S^{(m, 1, p(x))}_{-t}(n_{1},\cdots,\widehat{n_{j}},\cdots,n_{k}; n_{j}),
$$
where $\widehat{ }$ refers to dropping the term. Thus we can write   
\begin{equation}\label{at_term}
a(t) =  \textnormal{poly}(t) +\quad \sum^{k}_{j=1} S^{(m,1,p(x))}_{-t}(n_1,\cdots,\widehat{n_{j}},\cdots,n_{k};\ n_{j}),
\end{equation}
where 
\begin{equation}\label{poly1}
\textnormal{poly}(t)= \sum_{j=0}^{m+k-1}c_{j}\begin{pmatrix}t+m+k-j-1 \\ t\end{pmatrix}.
\end{equation}
Since the sequence $(a(t))$ is associated with a proper rational generating function (\ref{Fx1}) one can easily demonstrate that it satisfies a linear recurrence relation dependent on the first few terms $a(0),\cdots, a(n)$. In order to obtain a reciprocity relation we consider the formal power series 
$$
F^{o}(x)=\sum^{\infty}_{t=1}a(-t)x^{t},
$$
where the coefficients $a(-t)$ are derived by ``running backward'' the recurrence relation satisfied by $(a(t))$. By the result \cite[Theorem 4.1.6]{Beck3}, we have the relation $F^{o}(x)=-F(1/x)$. Therefore, by (\ref{Fx1}), we obtain   
$$
F^{o}(x)=-F(1/x)=\frac{(-1)^{m+k-1}x^{s+m-d}\tilde{p}(x)}{(1-x)^{m}(1-x^{n_1})\cdots (1-x^{n_k})},
$$
where $\tilde{p}(x)=x^{d}p(1/x)$. Due to the factor $x^{s+m-d}$ in the numerator of $F^{o}(x)$ we have   
\begin{equation}\label{at_minus}
a(-t)=0 \quad \textnormal{   if   }\quad 1 \leq t < s+m-d. 
\end{equation}
\begin{figure}[t!]
\centering 
\includegraphics[scale=0.4]{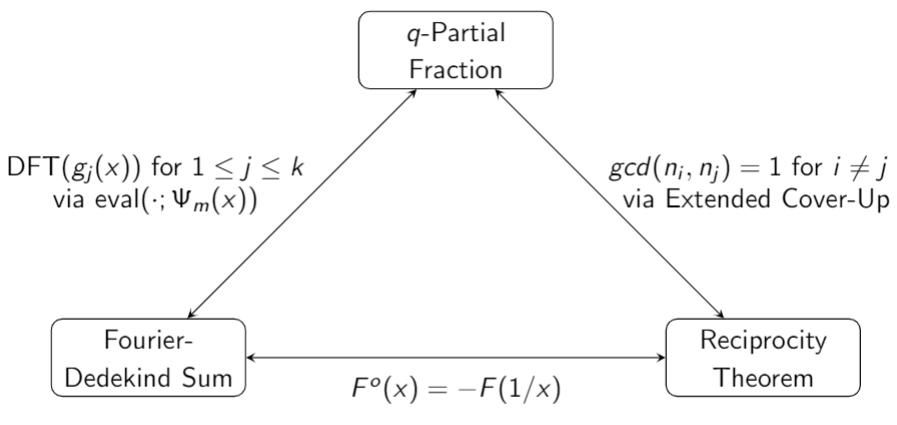}
\\ \textit{Scheme for the Reciprocity Theorem} \\ $F(x)=\frac{p(x)}{(1-x)^{m}(1-x^{n_{1}})\cdots(1-x^{n_{k}})}=\frac{g_{0}(x)}{(1-x)^{m+k}}+\sum^{k}_{j=1}\frac{g_{j}(x)}{(1-x^{n_{j}})}$.
\end{figure}  

\begin{thm}[Reciprocity Theorem, $r=1$ and $m\geq 1$]\label{k_dim_recip_thm}
For all pairwise relatively prime positive integers $n_{1},\cdots, n_{k}$, and letting $r=1$ in the generating function (\ref{Fx1}), the equation
$$
\sum^{k}_{j=1} S^{(m,1,p(x))}_{t}(n_1,\cdots,\widehat{n_{j}},\cdots,n_{k};\ n_{j})= 
      \left\{\begin{array}{@{}l@{\thinspace}l}
         p(0)-\textnormal{poly}(0) & \textnormal{   if   } t=0 \\
       -\textnormal{poly}(-t)  & \textnormal{   if   }1\leq t<s+m-d
     \end{array}\right.,
$$
holds, where $\textnormal{poly}(t)$ is given in (\ref{poly1}) and $s=n_{1}+\cdots + n_{k}$.
\end{thm}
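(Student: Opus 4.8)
The plan is to read the reciprocity identity directly off the coefficient formula (\ref{at_term}), after extending that formula from nonnegative indices to the backward-extended coefficients $a(-t)$ appearing in $F^{o}(x)$. Throughout I write
$$
Q(t):=\textnormal{poly}(t)+\sum_{j=1}^{k}S^{(m,1,p(x))}_{-t}(n_{1},\cdots,\widehat{n_{j}},\cdots,n_{k};n_{j}),
$$
so that (\ref{at_term}) asserts $a(t)=Q(t)$ for all $t\geq 0$.

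First I would check that $Q(t)$ is a quasi-polynomial satisfying the same linear recurrence as the sequence $(a(t))$. Indeed $\textnormal{poly}(t)$ is a polynomial of degree $m+k-1$, corresponding to the root $x=1$ of multiplicity $m+k$ of the denominator $(1-x)^{m}\prod_{j}(1-x^{n_{j}})=(1-x)^{m+k}\prod_{j}\Psi_{n_{j}}(x)$, while each summand $S^{(m,1,p(x))}_{-t}(\cdots;n_{j})$ is a finite combination of exponentials $\xi_{n_{j}}^{-\ell t}$ whose characteristic roots are exactly the nontrivial roots of $\Psi_{n_{j}}(x)$. Hence $Q(t)$ is annihilated by the recurrence dictated by this denominator. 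Since $Q(t)=a(t)$ for all $t\geq 0$ and both satisfy the same recurrence with matching initial data, they coincide on all of $\mathbb{Z}$; in particular $a(-t)=Q(-t)$, where $a(-t)$ is the backward extension packaged in $F^{o}(x)=\sum_{t\geq 1}a(-t)x^{t}$.

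With the identity $a(t)=Q(t)$ now valid for every integer $t$, I would substitute $-t$ for $t$ to obtain
$$
a(-t)=\textnormal{poly}(-t)+\sum_{j=1}^{k}S^{(m,1,p(x))}_{t}(n_{1},\cdots,\widehat{n_{j}},\cdots,n_{k};n_{j}).
$$
For $1\leq t<s+m-d$ the left-hand side vanishes by (\ref{at_minus}) (a consequence of the factor $x^{s+m-d}$ in the numerator of $F^{o}(x)=-F(1/x)$), which rearranges to $\sum_{j}S^{(m,1,p(x))}_{t}=-\textnormal{poly}(-t)$, the second case of the theorem. For $t=0$ the vanishing no longer applies, so I would instead evaluate the constant coefficient directly: setting $x=0$ in (\ref{Fx1}) gives $a(0)=p(0)$, and then (\ref{at_term}) at $t=0$ reads $p(0)=\textnormal{poly}(0)+\sum_{j}S^{(m,1,p(x))}_{0}$, giving the first case $\sum_{j}S^{(m,1,p(x))}_{0}=p(0)-\textnormal{poly}(0)$.

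The main obstacle is the legitimacy of evaluating the quasi-polynomial $Q$ at negative indices, since (\ref{at_term}) is a priori only established for $t\geq 0$. I expect to resolve this cleanly through the recurrence-matching argument above, which is precisely the content of the relation $F^{o}(x)=-F(1/x)$ together with \cite[Theorem 4.1.6]{Beck3}: that relation certifies that running the recurrence backward reproduces exactly the negative-index values of $Q$, so no independent computation of the $a(-t)$ is required beyond the vanishing already extracted in (\ref{at_minus}).
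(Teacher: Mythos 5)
Your proposal is correct and follows essentially the same route as the paper's proof: read the identity off equation (\ref{at_term}), pass to negative indices via $F^{o}(x)=-F(1/x)$, and conclude from the vanishing (\ref{at_minus}) together with $a(0)=p(0)$. The only difference is that you spell out the recurrence-matching argument that legitimizes substituting $t\mapsto -t$ in (\ref{at_term}) — a step the paper delegates to the cited result \cite[Theorem 4.1.6]{Beck3} — while the paper additionally invokes the identity (\ref{comb_n}) only to rewrite the binomial coefficients so that $\textnormal{poly}(-t)$ is exhibited explicitly as a polynomial in $t$, which your treatment of $\textnormal{poly}$ as a polynomial handles implicitly.
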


\begin{proof}[Proof.]
The case $t=0$ can be obtained directly by substituting $a(0)=p(0)$ into the equation (\ref{at_term}). For the other case replace $t$ by $-t$ into (\ref{at_term}) and use (\ref{at_minus}). Further we use the identity (see \cite{Beck})
\begin{equation}\label{comb_n}
(-1)^{l}\begin{pmatrix}
-t+\lambda\\
l\\
\end{pmatrix}=\begin{pmatrix}
t+l-1-\lambda\\
l\\
\end{pmatrix}, 
\end{equation}
for $t,\lambda\in \mathbb{Z} \textnormal{ and } l\in \mathbb{Z}^{+},$ to obtain the required result.   
\end{proof}
  
\begin{exmp}
Suppose $k$ is odd. Then a generalization of the Zagier's higher dimensional Dedekind sum is
$$
Z_{t}^{(m)}(n_{1},\cdots,n_{k}; b)=\frac{2}{b}\sum^{b-1}_{j=1}\frac{(1+\xi_{b}^{jn_{1}})\cdots(1+\xi_{b}^{jn_{k}})\xi_{b}^{-jt}}{(1-\xi_{b}^{j})^{m}(1-\xi_{b}^{jn_{1}})\cdots(1-\xi_{b}^{jn_{k}})},
$$
for  $\xi_{b}=e^{2i\pi/b}$. The motivation for the trigonometric sum with $t=m=0$ was given in \cite{Zagier} by topological considerations and no generating function was mentioned. By a direct observation from the formulas of the generalized Fourier-Dedekind sum a numerator for the generating function is 
$$
\theta(x)=(1+x^{n_{1}})\cdots(1+x^{n_{k}})+(1-x^{n_{1}})\cdots(1-x^{n_{k}}).
$$
 Upon a simplification we have 
$ 
 \theta(x)=2\sum_{S\subset A}x^{|S|},  
$
where $S$ is a subset of  $A=\{n_{1},\cdots,n_{k}\}$ with even cardinality.  
Therefore 
$
Z_{t}^{(m)}=S^{(m,1,\theta(x))}_{t}.
$
\end{exmp}

The $q$-partial fractions decomposition is, in general, not unique. A special gcd property is satisfied by the coefficients obtained through our $q$-partial fractions and the coefficients given in the formula (\ref{denum}). 

\begin{thm} The reciprocity theorems imply the following conditions on the coefficients. 
\begin{enumerate}
\item (Rademacher Reciprocity Theorem $k=2$)
$$
n_{2}c^{(n_{1})}_{-n\%n_{1}}+n_{1}c^{(n_{2})}_{-n\%n_{2}}= \left\{\begin{array}{@{}l@{\thinspace}l}
       n-1 & \text{   if   } 1\leq n<n_{1}n_{2} \\
       n_{1}n_{2}-1  & \text{   if   }n=0. \\
     \end{array}\right.
$$

\item (Sylvester Reciprocity Theorem, $k=2$)
$$
\left(\frac{c^{(n_1)}_{n\%n_{1}}+c^{(n_1)}_{-n\%n_{1}}}{-2}\right)n_{2}+\left(\frac{c^{(n_2)}_{n\%n_{2}}+c^{(n_2)}_{-n\%n_{2}}}{-2}\right)n_{1}=1,
$$
for $1\leq n<n_{1}n_{2}$.

\item (Rademacher Reciprocity Theorem, $k=3$)
$$
c^{(n_{1})}_{-n\%n_{1}}n_{2}n_{3}+c^{(n_{2})}_{-n\%n_{2}}n_{1}n_{3}+c^{(n_{3})}_{-n\%n_{3}}n_{1}n_{2}= \left\{\begin{array}{@{}l@{\thinspace}l}
       \frac{(n-1)(n+1-s)}{2} & \text{   if   } 1\leq n<s' \\
       2s'-s+1  & \text{   if   }n=0. \\
     \end{array}\right.
$$
where $s=n_{1}+n_{2}+n_{3}$ and $s'=n_{1}n_{2}n_{3}$.

\end{enumerate}
\end{thm}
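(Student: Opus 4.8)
The three identities are corollaries of Theorem~\ref{k_dim_recip_thm} (and of its symmetrized companion) once the generalized Fourier--Dedekind sums appearing there are traded for the $q$-partial-fraction coefficients $c^{(j)}_\alpha$. My plan is to (i) record the dictionary between the $c^{(j)}_\alpha$ and the sums $S^{(m,1,p(x))}_t$; (ii) feed this dictionary into the backward-coefficient relation that underlies the reciprocity theorem; and (iii) evaluate the polynomial $\mathrm{poly}(-n)$ in closed form.

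For step (i), the finite Fourier analysis \eqref{finite_FS} together with Lemma~\ref{eval_eval} shows that the coefficient of $x^t$ in the $j$-th periodic term is simultaneously $\tfrac1{n_j}c^{(j)}_{t\%n_j}$ and the sum $S^{(m,1,p(x))}_{-t}(n_1,\dots,\widehat{n_j},\dots,n_k;n_j)$ of Definition~\ref{FDS}. Hence $c^{(j)}_{t\%n_j}=n_j\,S^{(m,1,p(x))}_{-t}(\dots;n_j)$ and, putting $t=-n$, $c^{(j)}_{-n\%n_j}=n_j\,S^{(m,1,p(x))}_{n}(\dots;n_j)$. For step (ii) I start from the denumerant written in the two equivalent ways \eqref{denum} and \eqref{denumerant_fds}, so that the $n$-th backward coefficient is $a(-n)=\mathrm{poly}(-n)+\sum_{j}\tfrac1{n_j}c^{(j)}_{-n\%n_j}$. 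The reciprocity input is $a(-n)=0$ on the admissible range, which is exactly \eqref{at_minus}; clearing denominators by multiplying through by $n_1\cdots n_k$ converts this into $\sum_j c^{(j)}_{-n\%n_j}\prod_{i\neq j}n_i=-(n_1\cdots n_k)\,\mathrm{poly}(-n)$, which is precisely the shape of parts (1) and (3).

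Step (iii) is a direct computation. Using the explicit $c_j$ of Theorem~\ref{main_qpf} (equivalently $\tilde\beta_0=1$ and $\tilde\beta_1(m)=\tfrac{1-m}{2}$) together with the binomial reflection \eqref{comb_n} to rewrite $\binom{-n+\lambda}{l}$, the quantity $-(n_1\cdots n_k)\,\mathrm{poly}(-n)$ collapses to the linear expression $n-1$ when $k=2$ and to the quadratic $\tfrac{(n-1)(n+1-s)}{2}$ when $k=3$. The boundary value $n=0$ is read off from the $t=0$ branch of Theorem~\ref{k_dim_recip_thm}, where the right-hand side is $p(0)-\mathrm{poly}(0)$; multiplying by $n_1\cdots n_k$ and using $a(0)=p(0)$ yields $n_1n_2-1$ and $2s'-s+1$ respectively. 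Part (2) is then obtained by symmetrizing: invoking the dictionary at both signs gives $c^{(j)}_{n\%n_j}+c^{(j)}_{-n\%n_j}=n_j\big(S_{-n}+S_n\big)$, so the Sylvester combination equals $\tfrac12 n_1n_2\big(\mathrm{poly}(n)+\mathrm{poly}(-n)\big)$ after the prescribed normalization $-\tfrac12$, and since $\mathrm{poly}$ is linear its odd part cancels, leaving the constant $1$.

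The crux — and the step I expect to fight with — is step (iii) together with the determination of the exact admissible range $\lambda$. One must pin down which generating function (that is, which $p(x)$ and which exponent $m$) produces the clean polynomials above and show that the backward coefficients actually vanish on the full stated interval, which is also what legitimizes pairing the reciprocity at $t=n$ with its reflection in the symmetrized part (2). Keeping the normalization $n_j$ versus $1/n_j$ and the index sign $t$ versus $-t$ consistent throughout, and carrying out the closed-form collapse of $-(n_1\cdots n_k)\,\mathrm{poly}(-n)$ via \eqref{comb_n}, is where the real bookkeeping lies; the rest is the mechanical substitution described above.
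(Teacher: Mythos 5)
Your step (i) dictionary ($c^{(j)}_{t\%n_{j}}=n_{j}S^{(m,1,p(x))}_{-t}(\dots;n_{j})$) is fine, and your route for parts (1) and (3) reproduces the \emph{first} ingredient of the paper's proof (Theorem \ref{k_dim_recip_thm}, i.e.\ vanishing of backward coefficients). But that ingredient cannot reach the stated ranges, and you omit the paper's second ingredient entirely. The vanishing $a(-n)=0$ that drives your step (ii) is exactly (\ref{at_minus}), valid only for $1\le n<s+m-d$ where $s=n_{1}+\cdots+n_{k}$ is the \emph{sum} of the moduli; for the coefficients of (\ref{denum}) (where $p(x)=1$, $m=0$) this means $1\le n<n_{1}+n_{2}$, resp.\ $1\le n<n_{1}+n_{2}+n_{3}$. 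The theorem asserts the identities up to the \emph{product} $n_{1}n_{2}$, resp.\ $n_{1}n_{2}n_{3}$, and beyond the sum the backward coefficients do not vanish: from $F^{o}(x)=-F(1/x)$ one gets $a(-n)=-a(n-n_{1}-n_{2})$, so for $(n_{1},n_{2})=(2,3)$ and $n=5$ one has $a(-5)=-a(0)=-1\neq 0$. Covering the long range is precisely why the paper's proof invokes, besides Theorem \ref{k_dim_recip_thm}, the Sylvester reciprocity $d(t;n_{1},n_{2})+d(n_{1}n_{2}-t;n_{1},n_{2})=1$ for $1\le t<n_{1}n_{2}$ of \cite[Lemma 1.7]{Beck}; that identity never appears in your argument.

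The same omission undermines part (2) and your closed-form claims. Your symmetrization needs $\sum_{j}\bigl(S_{n}+S_{-n}\bigr)$, which by (\ref{at_term}) equals $a(n)+a(-n)-\mathrm{poly}(n)-\mathrm{poly}(-n)$; you silently treat both $a(n)$ and $a(-n)$ as zero, but $a(n)=d(n;n_{1},n_{2})$ is the counting function and does not vanish. Making $a(n)+a(-n)$ disappear on $1\le n<n_{1}n_{2}$ is exactly the content of the Sylvester identity, so the issue you flag as ``what legitimizes pairing the reciprocity at $t=n$ with its reflection'' is not bookkeeping but the missing core of the proof. Moreover the numbers you assert in step (iii) are not what the computation gives: with the (\ref{denum}) coefficients one has $c_{1}=\frac{n_{1}+n_{2}-2}{2n_{1}n_{2}}$, hence $-n_{1}n_{2}\,\mathrm{poly}(-n)=n-\frac{n_{1}+n_{2}}{2}$ (not $n-1$), and in the symmetrized combination the odd part of $\mathrm{poly}$ cancels but its constant term $\frac{1}{2n_{1}}+\frac{1}{2n_{2}}$ does not, leaving $\frac{n_{1}+n_{2}}{2}$ (not $1$). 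In fact no bookkeeping of the kind you describe can produce the stated right-hand side of (1): since $(1-x)$ divides each $g_{j}(x)$, the coefficients satisfy $\sum_{\alpha}c^{(j)}_{\alpha}=0$, so the left side of (1) sums to $0$ over a full period $n=0,\dots,n_{1}n_{2}-1$, while the claimed right-hand sides sum to $\frac{n_{1}n_{2}(n_{1}n_{2}-1)}{2}\neq 0$. So the normalization question you defer to the end is fatal to the ``collapse'' promised in step (iii), and the proposal as written does not establish any of the three parts on their stated ranges.
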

\begin{proof}
The results follow directly from Theorem \ref{k_dim_recip_thm} and the Sylvester reciprocity theorem \cite[Lemma 1.7]{Beck}  
$$
\quad\quad d(t; n_{1},n_{2})+d(n_{1}n_{2}-t;n_{1},n_{2})=1,\quad \textnormal{ for } 1\leq t<n_{1}n_{2}.
$$ 
 
\end{proof}

Another direct consequence of the reciprocity theorem is showing that certain trigonometric sums are indeed polynomials. For instance, by taking the $0$-dimensional reciprocity relation for (\ref{pf_k}) in Lemma \ref{pow_k} we have 
\begin{equation}\label{sigma_k}
1=\frac{1}{m}\sum_{j=0}^{k}f^{(m)}_{j}(1)+\frac{1}{m}\sum^{m-1}_{j=1}\frac{1}{(1-\xi^{j})^{k}}.
\end{equation}
Therefore,  $\sum_{j=1}^{m-1} \frac{1}{(1-\xi^{j})^{k}}$ is a polynomial provided $f^{(m)}_{k}(1)$ is a polynomial. From the equation (\ref{sigma_k}) we have 
$$
f^{(m)}_{k}(1)=\sum_{j=1}^{m-1}\left(\frac{1}{(1-\xi^{j})^{k-1}}-\frac{1}{(1-\xi^{j})^{k}}\right)=(-1)^{k}\frac{\tilde{\beta}_{k}(m)}{k!}.
$$
Gessel \cite{Gessel} gave an explicit formula for $\sum \frac{1}{(1-\xi^{j})^{k}}$ using analytic methods.

\begin{thm}[Gessel \cite{Gessel}]\label{fjm_polynomial}
\begin{equation}\label{fjm}
\frac{(-1)^{k}}{k!}\tilde{\beta}_{k}(m) = \left\{\begin{array}{@{}l@{\thinspace}l}
       \frac{m-1}{2} & \textnormal{   if   } k=1 \\
       \frac{1}{(k-1)!}\sum^{k}_{j=2}\frac{B_{j}}{j}\textnormal{stirl}(k-1,j-1)(m^{j}-1)  & \textnormal{   if   } k\geq 2. \\
     \end{array}\right.
\end{equation}
\end{thm}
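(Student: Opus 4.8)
The plan is to prove the identity by a formal power series argument in which the degenerate Bernoulli numbers sit on one side, the ordinary Bernoulli numbers on the other, and the Stirling numbers appear precisely as the coefficients converting between two natural coordinates. Write $C_k=\frac{(-1)^k}{k!}\tilde\beta_k(m)$, so that by \eqref{young_eqn} these are the Taylor coefficients of
\[
C(u):=\frac{m}{\Psi_m(1-u)}=\frac{mu}{1-(1-u)^m}=\sum_{k\ge0}C_k u^k,\qquad u=1-x .
\]
The case $k=1$ is immediate, since $\tilde\beta_1(m)=(1-m)/2$ gives $C_1=(m-1)/2$; so I treat $k\ge2$.

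First I would obtain a closed form for $L(u):=\log C(u)$. Introducing the exponential coordinate $t=\log\frac{1}{1-u}$, equivalently $1-u=e^{-t}$ (that is, $x=e^{-t}$), turns $(1-u)^m$ into $e^{-mt}$ and gives $C(u)=\frac{m(1-e^{-t})}{1-e^{-mt}}$. Writing $\log(1-e^{-s})=\log s+h(s)$ with $h(s)=\log\frac{1-e^{-s}}{s}$, the logarithmic singularities cancel, $\log m+\log t-\log(mt)=0$, and $L=h(t)-h(mt)$. Feeding in the standard expansion $\log\frac{s}{e^s-1}=\sum_{n\ge1}\frac{(-1)^{n-1}B_n}{n\,n!}s^n$ (so that $h(s)=-s-\sum_{n\ge1}\frac{(-1)^{n-1}B_n}{n\,n!}s^n$) yields the clean form
\[
L(u)=\log C(u)=-\sum_{n\ge1}\frac{B_n}{n\,n!}\,(m^n-1)\,t^n,\qquad t=\log\tfrac{1}{1-u},
\]
where the convention $B_1=-\tfrac12$ absorbs the linear term neatly.

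Next I would read off $C_k$ from $L$ by differentiating in whichever coordinate makes each side transparent. Since $\frac{dt}{du}=\frac{1}{1-u}$, one has the operator identity $\frac{d}{dt}=(1-u)\frac{d}{du}$. Computing $\frac{dL}{dt}=(1-u)\frac{C'(u)}{C(u)}$ directly from $C(u)=\frac{mu}{1-(1-u)^m}$, and simplifying with $\frac{m(1-u)^m}{1-(1-u)^m}=\frac{C(u)}{u}-m$, gives
\[
\frac{dL}{dt}=(m-1)-\sum_{k\ge1}C_k\,u^{k-1},
\]
so that $C_k=-[u^{k-1}]\frac{dL}{dt}$ for every $k\ge2$. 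Differentiating the closed form of $L$ instead gives $\frac{dL}{dt}=-\sum_{j\ge1}\frac{B_j}{j!}(m^j-1)\,t^{j-1}$, and equating the two reduces the theorem to extracting $[u^{k-1}]t^{j-1}$.

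The last step is combinatorial: the unsigned Stirling numbers of the first kind are exactly the transition coefficients $\frac{1}{(j-1)!}\bigl(\log\frac{1}{1-u}\bigr)^{j-1}=\sum_{\ell}\textnormal{stirl}(\ell,j-1)\frac{u^\ell}{\ell!}$, whence $[u^{k-1}]t^{j-1}=\frac{(j-1)!}{(k-1)!}\textnormal{stirl}(k-1,j-1)$. Substituting and using $\frac{(j-1)!}{j!}=\frac1j$ gives $C_k=\frac{1}{(k-1)!}\sum_{j\ge1}\frac{B_j}{j}\textnormal{stirl}(k-1,j-1)(m^j-1)$; since $\textnormal{stirl}(k-1,j-1)=0$ unless $1\le j-1\le k-1$, the sum collapses to $2\le j\le k$, which is the asserted formula. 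I expect the main obstacle to be keeping the two coordinate systems $u$ and $t$ consistent — in particular verifying the cancellation of logarithmic terms that produces the clean $L$ and confirming, via $B_1=-\tfrac12$, that the $n=1$ contribution is correctly folded in; once the closed form of $L$ and the identity $\frac{d}{dt}=(1-u)\frac{d}{du}$ are established, the Stirling extraction and the collapse of the summation range are routine.
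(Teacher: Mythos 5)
Your proof is correct, and the comparison here is asymmetric: the paper does not prove this statement at all. Theorem \ref{fjm_polynomial} is imported from Gessel \cite{Gessel} (the surrounding text describes his derivation as ``analytic methods''), and the only related result the paper proves on its own, Lemma \ref{mk_factor}, is the weaker fact that $\tilde{\beta}_{k}(m)$ is a polynomial in $m$ divisible by $m^{k}$. Your argument therefore supplies a self-contained derivation of the full Bernoulli--Stirling formula, and it checks out at every step: with $u=1-x$ and $x=e^{-t}$, equation (\ref{young_eqn}) becomes $C(u)=m(1-e^{-t})/(1-e^{-mt})$, and the cancellation of $\log m+\log t-\log(mt)$ gives $L=h(t)-h(mt)$ for $h(s)=\log\frac{1-e^{-s}}{s}$; your series for $h$ is the classical expansion of $\log\frac{s}{e^{s}-1}$ shifted by $-s$; the identity $\frac{m(1-u)^{m}}{1-(1-u)^{m}}=\frac{C(u)}{u}-m$ does yield $\frac{dL}{dt}=(m-1)+\frac{1-C(u)}{u}=(m-1)-\sum_{k\ge1}C_{k}u^{k-1}$; and $[u^{k-1}]t^{j-1}=\frac{(j-1)!}{(k-1)!}\,\textnormal{stirl}(k-1,j-1)$ is the defining generating function of the unsigned Stirling numbers of the first kind, with $\textnormal{stirl}(k-1,0)=0$ for $k\ge2$ eliminating the $j=1$ term, so the sign convention $B_{1}=-\tfrac12$ never enters the final formula. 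A spot check at $k=2$ gives $\frac{B_{2}}{2}(m^{2}-1)=\frac{m^{2}-1}{12}$, matching the value of $f^{(m)}_{2}(1)$ recorded in Example \ref{n1n1n2}. The only cosmetic point is the base case $k=1$, where you quote $\tilde{\beta}_{1}(m)=(1-m)/2$ rather than deriving it; it follows in one line from $C(u)=1+\frac{m-1}{2}u+O(u^{2})$. What your route buys is a proof of the explicit formula that is independent of the trigonometric sums $\sum_{j=1}^{m-1}(1-\xi^{j})^{-k}$ through which both Gessel and the paper approach $\tilde{\beta}_{k}(m)$; what it does not reproduce is the paper's structural information about the remainder polynomial $\tilde{\beta}_{k}(m,x)$ itself, which is the content of the paper's own algebraic contribution in that section.
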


\subsection{$\tilde{\beta}_{k}(m)$ is a Polynomial in $m$: An Algebraic Proof }\label{computation}
We provide a purely algebraic proof that the reciprocal polynomial of the degenerate Bernoulli number $\tilde{\beta}_{k}(m)$ is a polynomial in $m$, where 
$$
\frac{(-1)^k}{k!}\tilde{\beta}_{k}(m,x)= \left(-\frac{1}{m}x\Psi'_{m}(x)\right)^{k} \textnormal{ rem } \Psi_{m}(x).
$$
Towards this direction, we denote $g_{k}(x)=\left(-x\Psi'_{m}(x)\right)^{k} \textnormal{ rem } \Psi_{m}(x)$ and show that $g_{k}(1)$ is a polynomial in $m$ with a factor $m^{k}$. For the purpose of proving this claim we use the well known fact that $m(m+1)$ is a factor of the sum of powers $\sigma_{n}(m)=\sum_{i=1}^{m}i^{n}$.  

\begin{lem}\label{mk_factor}
The function $\left(xD\right)^{n}g_{k}(x)$, for $n=0,1,2,\cdots,$ when evaluated at $x=1$ is a polynomial in $m$ with zero as a root of multiplicity at least $k$. 
\end{lem}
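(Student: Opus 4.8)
The plan is to repackage the statement through the linear functional $L_n(F):=\left.(xD)^{n}F\right\rvert_{x=1}$, which sends a monomial $x^{j}$ to $j^{n}$. Since $xD=x\frac{d}{dx}$ is a derivation, $L_n$ satisfies the binomial Leibniz rule $L_n(FG)=\sum_{i=0}^{n}\binom{n}{i}L_i(F)\,L_{n-i}(G)$; this is exactly why the lemma must be asserted for every $n$ at once, as the induction on $k$ below only closes when the $L_i$ of the factors are available for all $i\le n$. Writing $P(x)=-x\Psi_m'(x)=-\sum_{j=1}^{m-1}jx^{j}$ so that $g_k=\rem{P(x)^{k}}{\Psi_m(x)}$, the divisibility input is two elementary evaluations: $L_j(\Psi_m)=\sum_{i=0}^{m-1}i^{j}$ and $L_i(P)=-\sum_{j=1}^{m-1}j^{\,i+1}$, both of which are polynomials in $m$ divisible by $m$, as follows from the quoted fact that $m$ divides the power sum $\sigma_n(m)=\sum_{i=1}^{m}i^{n}$.

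The main engine is an induction on $k$, for all $n$ simultaneously, carried by the recursion $g_{k}=\rem{P\,g_{k-1}}{\Psi_m}$, i.e.\ $g_{k}=P\,g_{k-1}-Q\,\Psi_m$ for the polynomial quotient $Q$. Applying $L_n$ and Leibniz gives $L_n(P\,g_{k-1})=\sum_{i}\binom{n}{i}L_i(P)\,L_{n-i}(g_{k-1})$, where each $L_i(P)$ contributes one factor of $m$ and $L_{n-i}(g_{k-1})$ contributes $m^{k-1}$ by the inductive hypothesis, so this part is a polynomial in $m$ divisible by $m^{k}$; the same one-line Leibniz argument shows the unreduced $L_n(P^{k})$ is itself divisible by $m^{k}$. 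It therefore remains to control $L_n(Q\,\Psi_m)=\sum_{i}\binom{n}{i}L_i(Q)\,L_{n-i}(\Psi_m)$, in which each $L_{n-i}(\Psi_m)$ already carries a factor of $m$. To make the quotient explicit I would reduce first modulo $x^{m}-1$, using the telescoping identity $x^{s}=(x^{m}-1)\sum_{\ell=1}^{\lfloor s/m\rfloor}x^{\,s-\ell m}+x^{\,s\%m}$; writing $\hat g_k=\rem{P^{k}}{x^{m}-1}$, only its top coefficient $d_{m-1}$ needs a final correction, $g_k=\hat g_k-d_{m-1}\Psi_m$, so that everything is reduced to showing the explicit quantities $L_i(\hat Q)$ (for $\hat Q=\lfloor P^{k}/(x^{m}-1)\rfloor$) and $d_{m-1}$ are polynomials in $m$ divisible by $m^{k-1}$.

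The step I expect to be the main obstacle is precisely this last point: because the reduction is $m$-dependent, it is not even clear a priori that these quantities are polynomials in $m$ rather than quasi-polynomials. The resolution I would pursue is that, coefficient by coefficient, each is a weighted composition count $\sum_{j_1+\cdots+j_k=c}j_1\cdots j_k$ with a prescribed total of the shape $c=(q+1)m-1$; by the generating function $\bigl(\sum_{j\ge1}jy^{j}\bigr)^{k}=y^{k}/(1-y)^{2k}$ this count equals $\binom{c+k-1}{2k-1}$, a genuine polynomial in $c$, hence in $m$, and the box restriction $j_i\le m-1$ is removed by a finite inclusion--exclusion whose length is bounded by the number of carries $q\le k-1$, independent of $m$. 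The extra factor of $m$ that upgrades the easy $m^{k-1}$ to the sharp $m^{k}$ comes again from the vanishing of $\sigma_n$ at $0$ fed through the Leibniz recursion, and the genuine work is to organize these ingredients into a single induction on $k$ that yields polynomiality in $m$ and the exact root multiplicity $k$ at one stroke.
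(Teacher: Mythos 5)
There is a genuine gap. Your framework (the functional $L_n(F)=\left.(xD)^nF\right\rvert_{x=1}$, induction on $k$ carried simultaneously over all $n$, and the Leibniz treatment of $L_n(P\,g_{k-1})$ using $m\mid L_i(P)$ and the inductive hypothesis) is sound and parallels the paper's strategy. But the pivot of your argument --- that $L_i(\widehat{Q})$ and $d_{m-1}$ are polynomials in $m$ divisible by $m^{k-1}$ --- is never proved. Your composition-count/inclusion--exclusion sketch, even if carried out, only addresses \emph{polynomiality} in $m$; the divisibility by $m^{k-1}$ is attributed to ``the vanishing of $\sigma_n$ at $0$ fed through the Leibniz recursion,'' but that mechanism does not apply here: $\widehat{Q}$ and $d_{m-1}$ are not products of factors you control, so Leibniz gives nothing, and the inductive hypothesis on $g_{k-1}$ cannot reach them without circularity --- from $P\,g_{k-1}=Q\,\Psi_m+g_k$, isolating $L_i(Q)$ requires knowing the divisibility of $L_n(g_k)$, which is the statement being proved. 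Your closing sentence (``the genuine work is to organize these ingredients into a single induction'') concedes exactly this: the organization you defer \emph{is} the proof. (A minor point: you also promise the ``exact'' multiplicity $k$, whereas the lemma claims only ``at least $k$.'')

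The paper's proof avoids the quotient altogether, and that is the idea your proposal is missing. It computes $g_{k+1}=\rem{(g_1\,g_k)}{\Psi_m(x)}$ monomial-by-monomial via the substitution rule (Lemma \ref{cor_phi}): writing $g_k=\sum_t\alpha_t x^t$ with $\deg g_k\le m-2$, the product has degree at most $2m-4$, so the only $-\Psi_m(x)$ corrections come from exponents with $(i+j)\%m=m-1$, and their total coefficient is $\sum_t t\,\alpha_t=\left.(xD)g_k\right\rvert_{x=1}$ --- a quantity the inductive hypothesis already controls. This is the real reason the lemma must be quantified over all $n$: the correction coefficient is itself an $L_1$-value of $g_k$, not a new unknown. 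The leftover sum $\sum_{i,j}\alpha_i(m-1-j)\left((i+j)\%m\right)^n$ is then grouped by $\alpha_t$; the coefficient of each $\alpha_t$ is a polynomial in $(m,t)$ that vanishes at $m=0$ (verified through power sums $\sigma_n$), so expanding it in powers of $t$ and applying the hypothesis $m^k\mid\sum_t t^s\alpha_t$ for every $s$ delivers the factor $m^{k+1}$. To repair your route you would either have to establish the quotient divisibility as an independent combinatorial theorem (substantial work, with no induction available to help), or restructure the reduction so that the $\Psi_m$-correction enters with a coefficient already covered by the inductive hypothesis --- which is precisely the paper's device.
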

\begin{proof}[Proof.]
We prove by induction on $k$. The result holds for $k=1$ case. Indeed, by a direct calculation we have 
$$
g_{1}(x)=x^{m-2}+\cdots+(m-2)x+(m-1),
$$
which implies $g_{1}(1)=m(m-1)/2$ and
$$
\left(xD\right)^{n}g_{1}(x)=(m-2)^{n}\cdot 1 x^{m-2}+(m-3)^{n}\cdot 2 x^{m-3}+\cdots+1^{n}\cdot(m-2)x 
$$
implies
\begin{eqnarray}
\nonumber \left.\left(xD\right)^{n}g_{1}(x)\right\vert_{x=1}&=&(m-2)^{n}\cdot 1+(m-3)^{n}\cdot 2+\cdots+1^{n}\cdot (m-2)\\
\nonumber &=& (m-1)\sigma_{n}(m-1)-\sigma_{n+1}(m-1),
\end{eqnarray}
which is also divisible by $m$.

For the induction step, suppose $g_{k}(x)=\alpha_{m-2}x^{m-2}+\cdots+\alpha_{1}x+\alpha_{0}$, where the coefficients $\alpha_{0},\cdots,\alpha_{m-2}$ are dependent on $m$ and $k$, and assume $\left.\left(xD\right)^{n}g_{k}(x)\right\vert_{x=1}=\sum_{t=0}^{m-2}t^{n}\alpha_{t}$ have $m^{k}$ as a factor for $n=0,1,\cdots$. \\
Consider the $(k+1)$ case
\begin{eqnarray}
\nonumber g_{k+1}(x)&=& \left(g_{k}(x)(-x\Psi'_{m}(x))\right) \textnormal{ rem }\Psi_{m}(x)\\
\nonumber &=& \left(\sum_{i,j=0,0}^{m-2,m-2}(m-1-j)\alpha_{i}x^{i+j}\right) \textnormal{ rem } \Psi_{m}(x)\\
\nonumber &=& \left(\sum_{\stackrel{i,j=0,0}{(i+j)\%m\neq(m-1)}}^{m-2,m-2}\alpha_{i}(m-1-j)x^{(i+j)\% m}\right) +   \left(\sum_{\stackrel{i,j=0,0}{(i+j)\%m=(m-1)}}^{m-2,m-2}\alpha_{i}(m-1-j)(x^{m-1}-\Psi_{m}(x))\right),
\end{eqnarray}
here we substitute for $x^{m-1}$ by $(x^{m-1}-\Psi_{m}(x))$ using the Substitution Rule 1 (Lemma \ref{rem_psi}). Simplifying we obtain
$$
g_{k+1}(x)=\sum_{i,j=0,0}^{m-2,m-2}\alpha_{i}(m-1-j)x^{(i+j)\% m}-(1+x+\cdots+x^{m-1})\sum_{t=1}^{m-2}t\alpha_{t}.
$$
and substituting $x=1$ 
\begin{eqnarray}
\nonumber g_{k+1}(x) \nonumber &=&\sum_{i,j=0,0}^{m-2,m-2}\alpha_{i}(m-1-j)-m\sum_{t=1}^{m-2}t\alpha_{t}\\
\nonumber &=&m\left(\sum_{i=0}^{m-2}\frac{m-1}{2}\alpha_{i}-\sum_{t=1}^{m-2}t\alpha_{t}\right)= m^{k+1}\times (\textnormal{polynomial in } m),
\end{eqnarray}
where the last statement holds by the induction hypothesis. 

Now we show that $\left.\left(xD\right)^{n}g_{k+1}(x)\right\vert_{x=1}$ also has $m^{k+1}$ as a factor. Observe that  
$
\left.\left(xD\right)^{n}g_{k}(x)\right\vert_{x=1}= \sum_{t=0}^{m-2}t^{n}\alpha_{t}
$
and 
\begin{equation}\label{gk_1}
\left.\left(xD\right)^{n}g_{k+1}(x)\right\vert_{x=1}=\sum_{t=0}^{m-2}\left(\sum_{i=1}^{t-2}i^{n}(t-1-i)+\sum_{i=k}^{m-1}i^{n}(m-1+t-i)\right)\alpha_{t}+ \sigma_{n}(m-1)\sum_{t=0}^{m-2}t^{n}\alpha_{t}.    
\end{equation}

Fortunately, we don't need to determine a formula for the summations; instead, we just need to show that $m$ is a factor.  The inner sum in the right hand side of the first term of the equation (\ref{gk_1}) can be simplified to
$$
(m-1)t^{n}+\cdots+t(m-1)^{n}=(m-1+t)\left\{\sigma_{n}(m-1)-\sigma_{n+1}(t-1)\right\}-\left\{\sigma_{n+1}(m-1)-\sigma_{n+1}(t-1)\right\}.
$$
Furthermore, setting $m=0$ we get 
$-(t-1)\sigma_{n+1}(t-1)+\sigma_{n+1}(t-1).$ Also, the second term is 
\begin{eqnarray}
\nonumber (t-2)\cdot 1^{n}+\cdots+1\cdot (t-2)^{n}&=&(t-1)\sigma_{n}(t-1)-\sigma_{n+1}(t-1)
\end{eqnarray}
Thus, the factor $(m-1)t^{n}+\cdots+t(m-1)^{n}+(t-2)\cdot 1^{n}+\cdots+1\cdot (t-2)^{n}$ vanishes when $m=0$. Therefore, the equation (\ref{gk_1}) is of the form $\left.\left(xD\right)^{n}g_{k}(x)\right\vert_{x=1}=m\left(\sum_{t=0}^{m-2}q(m,t)\alpha_{t}\right),$ where $q(m,t)$ is a polynomial in $m$ and $t$. Hence, by the induction hypothesis $\left.\left(xD\right)^{n}g_{k+1}(x)\right\vert_{x=1}$ has $m^{k+1}$ factor.  
\end{proof}

\section{Sylvester Waves}\label{sylvester_waves}

\subsection{Polynomial Part of $d(t;\textbf{A})$: $W_{1}(t;\textbf{A})$}
Given the arbitrary $k$-tuple $\textbf{A}=(a_{1},\cdots, a_{k})$ of positive integers, following our methodology we can state an explicit formula for the polynomial part $W_{1}(t;\textbf{A})$ of $d(t;\textbf{A})$. By the $q$-partial fraction (\ref{gen_qpf}), the polynomial part $W_{1}(t;\textbf{A})$ can be obtained by first simplifying the following expression using (\ref{young_eqn})  
\begin{eqnarray}
\nonumber g(x)=\textnormal{eval}\left(\frac{1}{\Psi_{a_{1}}(x)\cdots \Psi_{a_{k}}(x)};(1-x)^{k}\right) &=& \left(\prod_{j=1}^{k}\textnormal{eval}\left(\frac{1}{\Psi_{a_{j}}(x)};(1-x)^{k}\right)\right) \textnormal{ rem } (1-x)^{k}\\
\nonumber &=& \left(\prod_{j=1}^{k}\frac{1}{a_{j}}\sum_{i=0}^{k-1}(-1)^{i}\frac{\tilde{\beta}_{i}(a_{j})}{i!}(1-x)^{i}\right) \textnormal{ rem }(1-x)^{k}.
\end{eqnarray}
Therefore, the polynomial part corresponds to the coefficient of $x^{t}$ in  
$
g(x)/(1-x)^{k} 
$ is given by
\begin{equation}\label{sec_W1}
W_{1}(t;\textbf{A})=\frac{1}{a_{1}\cdots a_{k}}\sum^{k-1}_{j=0}\sum_{j_{1}+\cdots+j_{k}=j}(-1)^{j}\begin{pmatrix}t+k-j-1 \\ t\end{pmatrix}\frac{\tilde{\beta}_{j_{1}}(a_{1})\cdots \tilde{\beta}_{j_{k}}(a_{k})}{j_{1}!\cdots j_{k}!}.
\end{equation}
Our formula can be computed just by polynomial arithmetic using  (\ref{fkm_rdb}) or by using Bernoulli and Stirling numbers (see  Theorem \ref{fjm_polynomial}).
\begin{rema}
Since $\tilde{\beta}_{k}(n)$, given in (\ref{young_eqn}), is defined by an exponential generating function one can express the formula (\ref{sec_W1}) in a simpler form through the notation of umbral calculus \cite{gessel_umbral}
$$
W_{1}(t;\textbf{A})=\frac{1}{a_{1}\cdots a_{k}}\sum^{k-1}_{j=0}\begin{pmatrix}t+k-j-1 \\ t\end{pmatrix}\frac{(-1)^{j}}{j!}\left\{\tilde{\beta}(a_{1})+\cdots+\tilde{\beta}(a_{k})\right\}^{j},
$$
where after the multinomial expansion $\tilde{\beta}(a_{i})^{j}$ is replaced with $\tilde{\beta}_{j}(a_{i})$.
\end{rema}

\subsection{Periodic Parts of $d(t;\textbf{A})$: $W_{n_{j}}(t;\textbf{A})$}

We now consider the decomposition of the general case 
\begin{equation}\label{denum_decompose}
d(t;\textbf{A})=W_{1}(t;\textbf{A})+\sum^{k}_{j=1}W_{n_{j}}(t;\textbf{A}),
\end{equation}
where $\{n_{1},\cdots,n_{k}\}$ corresponds to (\ref{transformed_new}). By Theorem \ref{main_qpf}, we have a $q$-partial fraction
\begin{equation}\label{qpf_case2_eqn}
\frac{p(x)}{(1-x)^{m}}\prod_{j=1}^{k}\frac{1}{(1-x^{n_{j}})^{r_{j}}}=\sum^{m+s-1}_{j=0}\frac{c_{j}}{(1-x)^{m+s-j}}+\sum^{k}_{j=1}\sum_{i=0}^{r_{j}-1}\frac{h_{ij}(x)}{(1-x^{n_{j}})^{r_{j}-i}},
\end{equation}
where $s=r_{1}+\cdots+r_{k}$ and $\textnormal{deg}(h_{ij})<n_{j}$ for all $j=1,\cdots,k$. In order to determine the Fourier series we first evaluate $h_{ij}(x)$, given in (\ref{hij_term}), at $\xi_{j}=e^{2\pi i/n_{j}}$. 

\begin{thm}\label{eval_hj}
Evaluation of (\ref{hj_term}) and (\ref{hij_term}) at $\xi_{j}=e^{2\pi i /n_{j}}$ gives us
$$
h_{ij}(\xi_{j})=\frac{(-1)^{i}}{i!}D_{n_{j}}^{i}h_{j}(\xi_{j}),
$$
where $D_{n}(x^{\alpha})=\lfloor\frac{\alpha}{n}\rfloor x^{\alpha-n}$. In particular, we have 
\begin{equation}\label{h0j_term}
h_{0j}(\xi_{j})=\frac{p(\xi_{j})}{(1-\xi_{j})^{m}(1-\xi_{j}^{n_{1}})^{r_{1}}\cdots \widehat{(1-\xi_{j}^{n_{j}})^{r_{j}}}\cdots (1-\xi_{j}^{n_{k}})^{r_{k}}}.
\end{equation}
\end{thm}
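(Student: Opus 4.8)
The plan is to separate the two claims and reduce each to machinery already established. First I would observe that the decomposition (\ref{hij_term}) is nothing but the Taylor-like expansion of Theorem \ref{taylor_Dm} applied to $h=h_j$ with $b=n_j$ and $r=r_j$; matching the coefficients of $(1-x^{n_j})^i$ identifies
$$
h_{ij}(x)=\frac{(-1)^i}{i!}\,\textnormal{eval}\!\left(D_{n_j}^{\,i}h_j(x);\,1-x^{n_j}\right).
$$
The first asserted formula then follows at once from the observation recorded just after Theorem \ref{taylor_Dm}, namely that evaluation at a primitive root $\xi_j$ ``sees through'' the remainder modulo $1-x^{n_j}$: since $\textnormal{eval}(x^k;1-x^{n_j})=x^{k\%n_j}$ and $\xi_j^{n_j}=1$, one has $\textnormal{eval}(g(x);1-x^{n_j})\rvert_{x=\xi_j}=g(\xi_j)$ for every polynomial $g$. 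Taking $g=D_{n_j}^{\,i}h_j$ yields $h_{ij}(\xi_j)=\frac{(-1)^i}{i!}D_{n_j}^{\,i}h_j(\xi_j)$, as claimed.

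For the special case $i=0$ I would compute $h_{0j}(\xi_j)=h_j(\xi_j)$ directly from the defining formula (\ref{hj_term}). The key tool is the Substitution Rule 2 (Lemma \ref{eval_eval}): the argument of the eval in (\ref{hj_term}) is $f(x)/g(x)$ with $g(x)=(1-x)^{m+s}\prod_{i\neq j}\Psi_{n_i}(x)^{r_i}$, and I must first check the hypothesis $\textnormal{gcd}(g(x),\Psi_{n_j}(x))=1$. This holds because the $n_i$ are pairwise coprime, so $\textnormal{gcd}(\Psi_{n_i},\Psi_{n_j})=1$ for $i\neq j$, while $\Psi_{n_j}(1)=n_j\neq 0$ forces $1-x$ to be coprime to $\Psi_{n_j}$. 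Lemma \ref{eval_eval} then gives
$$
h_j(\xi_j)=(1-\xi_j)^{r_j}\cdot\frac{p(\xi_j)}{(1-\xi_j)^{m+s}\prod_{i\neq j}\Psi_{n_i}(\xi_j)^{r_i}}.
$$

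What remains is bookkeeping: I would convert each factor back to the $(1-x^{n_i})$ form via $1-x^{n_i}=(1-x)\Psi_{n_i}(x)$, so that $\Psi_{n_i}(\xi_j)^{r_i}=(1-\xi_j^{n_i})^{r_i}/(1-\xi_j)^{r_i}$, and then tally the exponent of $(1-\xi_j)$ using $s=r_1+\cdots+r_k$. Since $\sum_{i\neq j}r_i=s-r_j$, the net power of $(1-\xi_j)$ becomes $r_j-(m+s)+(s-r_j)=-m$, reproducing exactly (\ref{h0j_term}). I expect no genuine obstacle, since the whole argument is an assembly of Theorem \ref{taylor_Dm} and Lemma \ref{eval_eval}; the only points demanding care are the verification of the coprimality hypothesis required to invoke Lemma \ref{eval_eval} and the exponent accounting in the final simplification.
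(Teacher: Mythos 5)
Your proof is correct and takes essentially the same route as the paper: the identification $h_{ij}(x)=\frac{(-1)^i}{i!}\,\textnormal{eval}\left(D_{n_j}^{i}h_j(x);\,1-x^{n_j}\right)$ via Theorem \ref{taylor_Dm} (plus the fact that evaluation at $\xi_j$ factors through the remainder modulo $1-x^{n_j}$), and Substitution Rule 2 (Lemma \ref{eval_eval}) applied to (\ref{hj_term}) with the exponent count $r_j-(m+s)+(s-r_j)=-m$ for the case $i=0$. The paper's own proof is a one-line citation of exactly these ingredients, so your write-up merely makes explicit what the paper leaves implicit, including the coprimality check needed to invoke Lemma \ref{eval_eval}.
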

\begin{proof}[Proof.]
The proof easily follows from Equation \ref{gen_hij} and Substitution Rule 2, Lemma \ref{eval_eval}.  
\end{proof}

To obtain a direct formula for the Sylvester decomposition in the $q$-partial fraction in Theorem \ref{main_qpf} it is sufficient to consider a rational function of the form 
\begin{equation}\label{hx_nr}
\frac{h(x)}{(1-x^{n})^{r}}=\sum^{r-1}_{i=0}\frac{(-1)^{i}}{i!}\textnormal{eval}(D^{i}_{n}h(x); 1-x^{n})\frac{1}{(1-x)^{r-i}}, 
\end{equation}
with $\textnormal{deg}(h)<nr$, for positive integers $n$ and $r$. The Fourier series of each term of the above expression can be written as a finite product of finite Fourier series. That is,  
\begin{equation}\label{decomposition_term}
\frac{(-1)^{i}}{i!(1-x)^{r-i}}\textnormal{eval}(D^{i}_{n}h(x); 1-x^{n}) = \left(\sum^{\infty}_{t=0}\frac{1}{n}\sum^{n-1}_{j=0}\xi_{n}^{jt}x^{t}\right)^{r-i-1}\left(\sum^{\infty}_{t=0}\frac{1}{n}\sum^{n-1}_{j=0}\frac{(-1)^{i}}{i!}D_{n}^{i}h(\xi_{n}^{j})\xi_{n}^{jt}x^{t}\right),
\end{equation}
where $\xi_{n}=e^{2\pi i/n}$. Hence, the coefficients are functions of $\xi_{n}$, establishing the Sylvester decomposition. 

Suppose $h(x)$ in  (\ref{hx_nr}) is denoted by $\sum^{n-1}_{j=0}a(j)x^{j}$. Then, the top-order term of $t^{th}$ coefficient in the formal power series of (\ref{hx_nr}) is given by      
\begin{eqnarray}
\nonumber \begin{pmatrix}\lfloor\frac{t}{n}\rfloor + r - 1 \\ r-1 \end{pmatrix}a(t) &=& \frac{a(t)}{(r-1)!}\left(\left\lfloor\frac{t}{n}\right\rfloor+r-1\right)\cdots\left(\left\lfloor\frac{t}{n}\right\rfloor+1\right)\\
\nonumber &=& \frac{a(t)}{(r-1)!}\left\lfloor\frac{t}{n}\right\rfloor^{r-1}+O\left(\left\lfloor\frac{t}{n}\right\rfloor^{r-2}\right).
\end{eqnarray}
Here $\left\lfloor\frac{t}{n}\right\rfloor$ is the largest integer less than or equal to $\frac{t}{n}$. The function $\left\lfloor\frac{t}{n}\right\rfloor$ is given by the trigonometric sum (see \cite[pg. no. 10]{Beck})
$$
\left\lfloor\frac{t}{n}\right\rfloor=\frac{t}{n}+\frac{1}{2n}+\frac{1}{2}+\frac{1}{n}\sum^{n-1}_{j=1}\frac{\xi_{n}^{-jt}}{1-\xi_{n}^{j}},
$$
for $\xi_{n}=e^{2\pi i/n}$. Thus, the coefficient of $t^{th}$ term of (\ref{hx_nr}) is 
\begin{equation}\label{t_term}
\frac{a(t)}{(r-1)!}\left(\frac{t}{n}\right)^{r-1}+O(t^{r-2}).
\end{equation} 
We are now in a position to prove Theorem \ref{structure_theorem}.

\begin{proof}[Proof of Theorem \ref{structure_theorem}.]
The decomposition (\ref{denum_decompose}) follows from (\ref{main_qpf_formula}) and (\ref{decomposition_term}). The expression (\ref{wave_j}) for $W_{n_{j}}(t;\textbf{A})$ can be obtained from the top-order term (\ref{t_term}) from the Fourier series expansion (\ref{zero_term}) and (\ref{h0j_term}).  
\end{proof}

\begin{exmp}\label{n1n1n2}
Consider $\textbf{C}=(n_{1},n_{1},n_{2})$ with $\textnormal{gcd}(n_{1},n_{2})=1$. The polynomial part $W_{1}(t;\textbf{C})$ is the coefficient of $x^{t}$ in the formal power series of 
\begin{eqnarray}
\nonumber \frac{1}{(1-x)^{3}}\textnormal{eval}\left(\frac{1}{\Psi_{n_{1}}(x)^{2}\Psi_{n_{2}}(x)}; (1-x)^{3}\right)&=&  \sum^{2}_{j_{1}+j_{2}+j_{3}=0}\frac{c_{j_{1}+j_{2}+j_{3}}}{(1-x)^{3-(j_{1}+j_{2}+j_{3})}},
\end{eqnarray}
where 
$$
c_{j}=\frac{1}{n_{1}^{2}n_{2}}\sum_{j_{1}+j_{2}+j_{3}=j}f_{j_{1}}^{(n_{1})}(1)f_{j_{2}}^{(n_{1})}(1)f_{j_{3}}^{(n_{2})}(1).
$$
From (\ref{fkm_rdb}) and (\ref{fjm}) we can easily deduce 
$$
f_{0}^{(m)}(1)=1,\quad  f_{1}^{(m)}(1)=\frac{m-1}{2},\quad
\textnormal{ and }\quad 
f_{2}^{(m)}(1)=\frac{m^2-1}{12}.
$$
The wave $W_{n_{1}}(t;\textbf{C})$ is the coefficient of $x^{t}$ in 
$$
\frac{g_{1}(x)}{(1-x^{n_{1}})^{2}}=\frac{(1-x)^{2}}{(1-x^{n_{1}})^{2}}\textnormal{eval}\left(\frac{1}{(1-x)^{2}(1-x^{n_{2}})};\Psi_{n_{1}}(x)^{2}\right).
$$
The coefficient of $W_{n_{2}}(t;\textbf{C})$ is the coefficient of $x^{t}$ in 
$$
\frac{g_{2}(x)}{1-x^{n_{2}}}=\frac{(1-x)}{(1-x^{n_{1}})}\textnormal{eval}\left(\frac{1}{(1-x)(1-x^{n_{2}})^{2}};\Psi_{n_{2}}(x)\right).
$$
Using the expression for $\textnormal{eval}$ given in Table \ref{eval_table} and the properties of the $\textnormal{eval}$ given in Lemma \ref{lem_res} to simplify the above expressions we have the Sylvester waves given by\begin{eqnarray}
\nonumber W_{1}(t)&=&\frac{1}{2n_{1}^{2}n_{2}}t^{2}+\frac{2n_{1}+n_{2}}{2n_{1}^{2}n_{2}}t+\frac{5n_{1}^{2}+n_{2}^{2}+6n_{1}n_{2}}{12
n_{1}^{2}n_{2}},\\
\nonumber W_{n_{1}}(t)&=&\frac{1}{n_{1}^{2}}\left(\sum^{n_{1}-1}_{j=1}\frac{\xi_{n_{1}}^{-jt}}{1-\xi_{n_{1}}^{jn_{2}}}\right)t+\left(\frac{1}{2n_{1}^{2}}+\frac{1}{n_{1}^{2}}\sum^{n_{1}-1}_{j=1}\frac{\xi_{n_{1}}^{-jt}}{1-\xi_{n_{1}}^{jn_{2}}}\right)\left(\sum^{n_{1}-1}_{j=1}\frac{\xi_{n_{1}}^{-jt}}{1-\xi_{n_{1}}^{jn_{2}}}\right)+O(1)\quad \textnormal{ and }\\
\nonumber W_{n_{2}}(t)&=&\frac{1}{n_{2}}\sum^{n_{2}-1}_{j=1}\frac{\xi_{n_{2}}^{-jt}}{(1-\xi_{n_{2}}^{jn_{1}})^{2}}.
\end{eqnarray}
Furthermore, by $\xi_{n_{1}}^{j(t-n_{1})}=\xi_{n_{1}}^{jt}$, the denumerant $d(t;\textbf{C})=d(t;n_{1},n_{1},n_{2})=W_{1}(t)+W_{n_{1}}(t)+W_{n_{2}}(t)$ verifies  the Euler's recurrence formula
\begin{eqnarray}
\nonumber d(t; n_{1},n_{1},n_{2})&-&d(t-n_{1}; n_{1},n_{1},n_{2})\\
\nonumber &=& \frac{2t+n_{1}+n_{2}}{2n_{1}n_{2}}+\frac{1}{n_{1}}\sum^{n_{1}-1}_{j=1}\frac{\xi_{n_{1}}^{-jt}}{1-\xi_{n_{1}}^{jn_{2}}}+\frac{1}{n_{2}}\sum^{n_{2}-1}_{j=1}\frac{\xi_{n_{2}}^{-jt}}{1-\xi_{n_{2}}^{jn_{1}}}\\
\nonumber &=&d(t;n_{1},n_{2}).
\end{eqnarray} 
\end{exmp}

\subsection{The case $r_{1}=\cdots=r_{k}=r$.}
When $r_{1}=\cdots=r_{k}=r$, by Theorem \ref{structure_theorem} and Definition \ref{FDS}, we can see that the top-order term of each wave $W_{n_{j}}(t;\textbf{A})$ takes the form of the generalized Fourier-Dedekind sum.  That is, 
\begin{equation}\label{Wnj_S}
W_{n_{j}}(t;\textbf{A})=S_{-t}^{(m,r,p(x))}(\textbf{A}_{j};n_{j})\frac{t^{r-1}}{(r-1)!}+O(t^{r-2}),
\end{equation}
where $\textbf{A}_{j}=(n_{1},\cdots,\widehat{n_{j}},\cdots, n_{k})$. Thus, in this case, we can see a beautiful interplay between the denumerant and the generalized Fourier-Dedekind sum. Adding all the waves (\ref{Wnj_S}), for $j=1,\cdots,k$, and from the decomposition (\ref{denum_decompose}) we have 
\begin{eqnarray}
\nonumber d(t;\textbf{A}) &=& W_{1}(t;\textbf{A})+\sum^{k}_{j=1}W_{n_{j}}(t;\textbf{A})\\
&=&W_{1}(t;\textbf{A})+\left(\sum^{k}_{j=1}S_{-t}^{(m,r,p(x))}(\textbf{A}_{j};n_{j})\right)\frac{t^{r-1}}{(r-1)!}+ O(t^{r-2}).\label{denum_recip}
\end{eqnarray}

The above equation motivates us to seek a reciprocity theorem satisfied by the generalized Fourier-Dedekind sum $S_{t}^{(m,r,p(x))}$. In order to achieve this we need to invoke a formal reciprocity relation on an appropriate rational generating function. Thus our first task is to identify a rational generating function $G(x)$ with a formal power series $\sum_{t=0}^{\infty}b(t)x^{t}$, which satisfies a linear recurrence relation for some numbers $c_{0},\cdots, c_{n}$ with $c_{0},c_{n}\neq 0$ of the form   
\begin{equation}\label{linear_recur}
c_{0}b(t+n)+c_{1}b(t+n-1)+\cdots+c_{n}b(t)=0    
\end{equation}
for all $t\geq 0$. 

Suppose $N=n_{1}\cdots n_{k}$, then $\xi_{j}^{N}=1$ for all $j=1,\cdots,k$. Therefore, we have the generalized Fourier-Dedekind sum (and all other terms dependent on $\xi_{j}$) remaining invariant on replacing $t$ by $(t-qN)$ for any integer $q$, that is  
\begin{equation}\label{invariance_fds}
S_{t-qN}^{(m,r,p(x))}(\textbf{A}_{j};n_{j})=S_{t}^{(m,r,p(x))}(\textbf{A}_{j};n_{j}) ,  
\end{equation}
where $\textbf{A}_{j}=(n_{1},\cdots,\widehat{n_{j}},\cdots, n_{k})$, for all $j=1,\cdots,k$. With this observation at hand we can get rid of $O(t^{r-2})$ terms in (\ref{denum_recip}) employing the binomial identity 
\begin{equation}\label{bionomial_id}
\sum^{r-1}_{q=0}\begin{pmatrix}r-1 \\ q\end{pmatrix}(-1)^{q}(t-qN)^{s}=\left\{\begin{array}{@{}l@{\thinspace}l}
       0 & \textnormal{   if   } s<r-1 \\
       (r-1)!N^{r-1}  & \textnormal{   if   } s=r-1.
     \end{array}\right.
\end{equation}
   
\begin{figure}[t!]
\centering 
\includegraphics[scale=0.5]{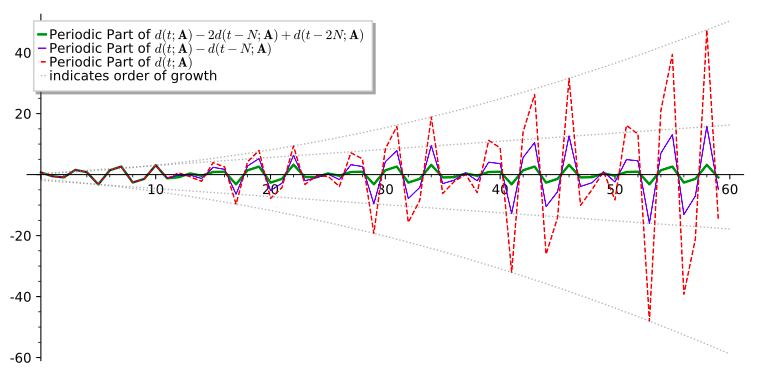}
\\ Here $\mathbf{A}=(3,3,3,4,4,4)$ and $N=12$. The amplitudes of the periodic parts of $d(t,;\mathbf{A})$, $d(t;\mathbf{A})-d(t-N;\mathbf{A})$ and $d(t;\mathbf{A})-2d(t-N;\mathbf{A})+d(t-2N;\mathbf{A})$ grow at quadratic, linear and constant rate respectively. 
\end{figure}  

As $W_{1}(t; \textbf{A})$ is a polynomial in $t$ of degree $(m+rk-1)$, and by means of (\ref{invariance_fds}) and (\ref{bionomial_id}), we can easily derive the equation  
\begin{equation}\label{higher_order_reciprocity}
b(t)=\sum^{r-1}_{q=0}\begin{pmatrix}r-1 \\ q\end{pmatrix}(-1)^{q}d(t-qN; \textbf{A})=\textnormal{poly}'(t)+N^{r-1}\sum^{k}_{j=1}S_{-t}^{(m,r,p(x))}(\textbf{A}_{j};n_{j}),
\end{equation}
where 
\begin{equation}\label{poly_W1}
\textnormal{poly}'(t)=\sum^{r-1}_{q=0}\begin{pmatrix}r-1 \\ q\end{pmatrix}(-1)^{q}W_{1}(t-qN;\textbf{A}).
\end{equation}
The above figure illustrates how the identity (\ref{bionomial_id}) reduces the order of growth in the amplitude yet retains the top-order term in the periodic part.

Moreover, the generating function for the left hand side of (\ref{higher_order_reciprocity}), $b(t)=\sum^{r-1}_{q=0}\begin{pmatrix}r-1 \\ q\end{pmatrix}(-1)^{q}d(t-qN;\textbf{A})$, is given by  
$$
G(x)=\sum^{\infty}_{t=0}b(t)x^{t}=\frac{p(x)(1-x^{N})^{r-1}}{(1-x)^{m}(1-x^{n_{1}})^{r}\cdots(1-x^{n_{k}})^{r}}.
$$
By \cite[Proposition 4.1.5]{Beck3}, $G(x)$ satisfies the linear recurrence relation (\ref{linear_recur}) if and only if $G(x)$ is a proper rational generating function. Therefore, we can derive a reciprocity relation provided $m$ is sufficiently large satisfying $m+r(n_{1}+\cdots+n_{k})>(r-1)N+d$ for $d=\textnormal{deg }(p)$. 

Setting $G^{o}(x)=-G(1/x)$ and by \cite[Theorem 4.1.6]{Beck3} we have 
$$
G^{o}(x)=\frac{(-1)^{m+r(k+1)}x^{\lambda}\tilde{p}(x)(1-x^{N})^{r-1}}{(1-x)^{m}(1-x^{n_{1}})^{r}\cdots(1-x^{n_{k}})^{r}}=\sum^{\infty}_{t=1}b(-t)x^{t},
$$
where $\lambda=m+r(n_{1}+\cdots+n_{k})-(r-1)N-d$ and $\tilde{p}(x)=x^{d}p(1/x)$. Here $b(-t)$ can be obtained by performing a backward computation for negative indices in (\ref{linear_recur}). For sufficiently large $m$ we have $\lambda>0$ and because of the factor $x^{\lambda}$ in the numerator of $G^{o}(x)$ we notice 
\begin{equation}\label{b_minus_t}
b(-t)=0,\quad \textnormal{ for }1\leq t < \lambda.
\end{equation} 
It is also easy to see that $b(0)=p(0)$. Substituting (\ref{b_minus_t}) into (\ref{higher_order_reciprocity}) we get
$$
\textnormal{poly}'(-t)+N^{r-1}\sum^{k}_{j=1}S_{t}^{(m,r,p(x))}(\textbf{A}_{j};n_{j})=\left\{\begin{array}{@{}l@{\thinspace}l}
         p(0) & \textnormal{   if   } t=0 \\
       0  & \textnormal{   if   }1\leq t<\lambda
     \end{array}\right.,
$$
where $\textnormal{poly}'(-t)=\sum^{r-1}_{q=0}\begin{pmatrix}r-1 \\ q\end{pmatrix}(-1)^{q}W_{1}(-t-qN;\textbf{A})$ is a polynomial in $t$ by (\ref{sec_W1}) and (\ref{comb_n}). Hence, we established the following result. 

\begin{thm}[Reciprocity Theorem] For sufficiently large $m\geq 0$ so that $\lambda>0$, where $\lambda=m+r(n_{1}+\cdots+n_{k})-(r-1)N-d$, the following equation holds
$$
\sum^{k}_{j=1}S_{t}^{(m,r,p(x))}(n_{1},\cdots,\widehat{n_{j}},\cdots,n_{k};n_{j}) =
\left\{\begin{array}{@{}l@{\thinspace}l}
         \frac{1}{N^{r-1}}(p(0)-\textnormal{poly}'(0)) & \textnormal{   if   } t=0 \\
       -\frac{1}{N^{r-1}}\textnormal{poly}'(-t)  & \textnormal{   if   }1\leq t<\lambda
     \end{array}\right.,
$$
where $\textnormal{poly}'(t)$ is given in (\ref{poly_W1}). 
\end{thm}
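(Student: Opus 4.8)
The plan is to mirror the $r=1$ argument of Theorem \ref{k_dim_recip_thm}, but now extracting the top-order coefficient of the quasi-polynomial $d(t;\textbf{A})$ rather than its constant term. I would begin from the top-order decomposition (\ref{denum_recip}), in which the coefficient of $t^{r-1}/(r-1)!$ is exactly $\sum_{j=1}^{k}S_{-t}^{(m,r,p(x))}(\textbf{A}_{j};n_{j})$, while the remaining $O(t^{r-2})$ terms still carry the periodicity of the roots of unity $\xi_{n_{j}}$. The key idea is to annihilate these lower-order terms by applying the $(r-1)$-th finite difference of step $N=n_{1}\cdots n_{k}$.

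Concretely, I would form $b(t)=\sum_{q=0}^{r-1}\binom{r-1}{q}(-1)^{q}d(t-qN;\textbf{A})$. Because $\xi_{n_{j}}^{N}=1$, the invariance (\ref{invariance_fds}) guarantees that each generalized Fourier-Dedekind sum is unchanged under $t\mapsto t-qN$, so the whole sum factors out of the difference operator. The binomial identity (\ref{bionomial_id}) then kills every power $t^{s}$ with $s<r-1$ appearing in the expansion of the top-order term and returns precisely $N^{r-1}$ on $t^{r-1}$. This produces the error-free identity (\ref{higher_order_reciprocity}), namely $b(t)=\textnormal{poly}'(t)+N^{r-1}\sum_{j=1}^{k}S_{-t}^{(m,r,p(x))}(\textbf{A}_{j};n_{j})$, with $\textnormal{poly}'$ the corresponding finite difference of $W_{1}$ given in (\ref{poly_W1}).

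Next I would pass to generating functions. Multiplying (\ref{Fx1}) by $(1-x^{N})^{r-1}$ shows that $b(t)$ has generating function $G(x)=p(x)(1-x^{N})^{r-1}/\bigl((1-x)^{m}\prod_{j}(1-x^{n_{j}})^{r}\bigr)$. Provided $m$ is large enough that $\lambda=m+r(n_{1}+\cdots+n_{k})-(r-1)N-d>0$, this is a \emph{proper} rational function, so by \cite[Proposition 4.1.5, Theorem 4.1.6]{Beck3} the backward-run values $b(-t)$ of the recurrence are governed by $G^{o}(x)=-G(1/x)$. Computing $G^{o}(x)$ exhibits the factor $x^{\lambda}$ in its numerator, which forces $b(-t)=0$ for $1\leq t<\lambda$ and $b(0)=p(0)$, exactly the vanishing recorded in (\ref{b_minus_t}).

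Finally, I would substitute $t\mapsto -t$ into (\ref{higher_order_reciprocity}) and insert these boundary values of $b$, solving for the sum of Fourier-Dedekind sums; the two cases $t=0$ and $1\leq t<\lambda$ then fall out immediately after dividing by $N^{r-1}$. The main obstacle I anticipate is the bookkeeping around the properness hypothesis: one must confirm that the $x^{\lambda}$ factor in the numerator of $G^{o}(x)$ is genuine (so that the vanishing range is nonempty) and that $\textnormal{poly}'(-t)$ is legitimately a polynomial in $t$, which follows from (\ref{sec_W1}) together with the combinatorial identity (\ref{comb_n}). The delicate tension is that requiring $m$ large for properness must still leave a meaningful range $1\leq t<\lambda$, and tracking the exact exponent $\lambda$ through the substitution $x\mapsto 1/x$ is where sign and degree errors are most likely to creep in.
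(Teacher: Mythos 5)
Your proposal is correct and follows essentially the same route as the paper: the $(r-1)$-th finite difference with step $N$ combined with the invariance (\ref{invariance_fds}) and the binomial identity (\ref{bionomial_id}) to isolate the top-order Fourier--Dedekind coefficients, then the generating-function reciprocity $G^{o}(x)=-G(1/x)$ from \cite[Proposition 4.1.5, Theorem 4.1.6]{Beck3} to extract the vanishing $b(-t)=0$ for $1\leq t<\lambda$ and $b(0)=p(0)$. The points you flag as delicate (properness of $G$, the exponent $\lambda$, and the polynomiality of $\textnormal{poly}'(-t)$ via (\ref{sec_W1}) and (\ref{comb_n})) are exactly the ones the paper handles, and in the same way.
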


\section*{Acknowledgements}
I would like to thank Sampath Lonka, Rahul Raju Pattar and Hirak Doshi for discussions and support.


\end{document}